\numberwithin{figure}{section}
\numberwithin{table}{section}
\theoremstyle{plain}
\newtheorem{thm}{Theorem}[section]
\crefname{thm}{Theorem}{Theorems}
\newtheorem*{prop*}{Proposition}
\newtheorem*{thm*}{Theorem}
\newtheorem{prop}[thm]{Proposition}
\crefname{prop}{Proposition}{Propositions}
\newtheorem{lem}[thm]{Lemma}
\crefname{lem}{Lemma}{Lemmata}
\crefname{cor}{Corollary}{Corollaries}
\newtheorem{conj}[thm]{Conjecture}
\crefname{conj}{Conjecture}{Conjectures}
\crefname{equation}{Equation}{Equations}
\crefname{thmx}{Theorem}{Theorems}
\theoremstyle{definition}
\newtheorem{definition}[thm]{Definition}
\newtheorem*{dfn*}{Definition}
\theoremstyle{remark}
\begin{document}

\title{Virtual fibring of manifolds and groups}
    \author{Dawid Kielak}

\maketitle







\begin{abstract}
	The topic of this survey is the phenomenon of fibring over the circle for manifolds, and its group-theoretic twin, algebraic fibring. We will discuss the state of the art, and explain briefly some of the ideas behind the more recent developments, focusing on RFRS groups and manifolds with such fundamental groups. Then we will move on to a more speculative part, where many conjectures about fibring in higher dimensions will be given. The conjectures vary in their level of plausibility, but even the boldest of them might share the fate of Thurston's Virtually Fibred Conjecture, about which Thurston famously said: ``This dubious-sounding question seems to have a definite chance for a positive answer''.
\end{abstract}


\section*{Structure of the survey}

We will start by introducing fibring over the circle for manifolds in the smooth category. This will be quickly followed by results of Stallings and Farrell that, under certain conditions, reduce the problem to studying finiteness properties of kernels of epimorphisms from the fundamental groups of our manifolds to the integers $\mathbb Z$. We then discuss such finiteness properties, focusing on the homological types $\mathtt{FP}_n(R)$, and bring in the BNS invariants and Novikov rings into the story.

In \Cref{sec virt fibring}, we look into virtual fibring, the main topic. We discuss two types of obstructions: Euler characteristic and $L^2$-homology. This is followed by an introduction to the algebraic viewpoint on $L^2$-Betti numbers, and finally we outline how, for RFRS groups, Novikov and $L^2$-theory interact.

\Cref{sec apps} is devoted to two types of applications of the theory. The first is a weak form of fibring, where kernels are required to have cohomological dimension lower than the original groups; the second revolves around finding witnesses to the lack of (higher) coherence.

Finally, in \Cref{sec higher dim}, we concentrate on high dimensions. First we show how far one can get with the current methods in proving virtual fibring for Poincar\'e-duality groups, and then we speculate about the bigger picture for hyperbolic manifolds.

The main results and conjectures are summarised in \cref{table}.

\begin{table}[htbp]
	\footnotesize
	\centering
	\setlength{\tabcolsep}{6pt}
	\renewcommand{\arraystretch}{1.2}
	\begin{tabularx}{\textwidth}{@{}p{0.52\textwidth} X@{}}
		\toprule 
		\textbf{Hypotheses } & \textbf{Conclusion } \\
		\midrule \midrule
		
		\textbf{Stallings (\cref{Stallings}).} $M$ compact connected $3$–manifold; $G=\pi_1(M)$; $\phi \colon G \twoheadrightarrow \mathbb Z$.  &
		$M$ fibres  with $f_*=\phi$ iff $\ker\phi$ is finitely generated. \\
\addlinespace
			
		\textbf{Farrell (\cref{Farrell}).} $M$ closed connected smooth manifold, $\dim M\geqslant 6$; $G=\pi_1(M)$; $\phi \colon G \twoheadrightarrow \mathbb Z$. &
		$M$ fibres  with $f_*=\phi$ iff (i) the cover for $\ker\phi$ is finitely dominated; (ii) $c(\phi)=0$; (iii) $\tau(\phi)=0$. \\
\addlinespace
		\textbf{Farrell (\cref{Farrell aspherical}).} $M$ closed aspherical smooth, $\dim M\geqslant 6$; $G=\pi_1(M), \mathrm{Wh}(G)=0$; $\phi \colon G \twoheadrightarrow \mathbb Z$. &
		$M$ fibres with $f_*=\phi$ iff $\ker\phi$ is of type $\mathtt{F}$. \\
	\midrule 

		\textbf{L\"uck Mapping Torus Theorem (\cref{Lueck}).} $X$ is a mapping torus with compact base. &
		$\mathrm{H}_\ast^{(2)}(\widetilde X) = 0$. \\
\addlinespace		
				\textbf{Generalised Mapping Torus Theorem (\cref{HennekeKielak}).} $\mathbb K$ a field; $K$ of type $\mathtt{FP}_n(\mathbb K); G = K \rtimes \mathbb Z$. &
		$\beta_n^{(2)}(G;\mathbb K) = 0$ \textrm{ if defined}. \\
			\midrule
	
			\textbf{Agol (\cref{Agol}).} $M$ orientable irreducible compact $3$–manifold; $G=\pi_1(M)$  RFRS. &
	$M$ is virtually fibred iff $\chi(M)= 0$. \\
	\addlinespace	
		\textbf{Agol \& Lott--L\"uck (\cref{LottLueck}).} $M$ compact $3$–manifold; $G=\pi_1(M)$  RFRS. &
		$M$ is virtually fibred iff \(\beta^{(2)}_n(G)=0\) for all \(n\). \\
\addlinespace

		\textbf{Fisher (\cref{Fisher}).} $\mathbb K$ a field; $G$ RFRS of type $\mathrm{FP}_n(\mathbb K)$. &
		 $G$ is virtually $\mathtt{FP}_n(\mathbb K)$-fibred iff \(\beta^{(2)}_i(G;\mathbb K)=0\) for all \(i\leqslant n\).\\

		\midrule	
		
		\textbf{Fisher--Italiano--K.  (\cref{PD thm}).} $G$  RFRS $\mathrm{PD}^n$–group. &
		$G$ is virtually $K \rtimes \mathbb Z$ with $K$ orientable $\mathrm{PD}^{n-1}(\mathbb K)$ over all fields $\mathbb K$ iff $\beta_i^{(2)}(G; \mathbb K) = 0$ for all $i$ and all $\mathbb K$. \\
	\addlinespace
			
	\textbf{Fisher--Italiano--K.  (\cref{PD single K thm}).} $\mathbb K$ a field; $G$  RFRS orientable $\mathrm{PD}^n(\mathbb K)$–group. &
$G$ is virtually $K \rtimes \mathbb Z$ with $K$ orientable $\mathrm{PD}^{n-1}(\mathbb K)$ iff $\beta_i^{(2)}(G; \mathbb K) = 0$ for all $i$. \\
\midrule

\textbf{\cref{PD conj}.} $G$  RFRS $\mathrm{PD}^n$–group. &
$G$ is virtually $K \rtimes \mathbb Z$ with $K$  $\mathrm{PD}^{n-1}$-group iff $\beta_i^{(2)}(G; \mathbb K) = 0$ for all $i$ and all fields $\mathbb K$. \\
\addlinespace

\textbf{\cref{manifold conj}.} $M$ closed aspherical manifold; $G = \pi_1(M)$  RFRS. &
$M$ is virtually fibred iff $\beta_i^{(2)}(G; \mathbb K) = 0$ for all $i$ and all fields $\mathbb K$. \\
\addlinespace

\textbf{Dodziuk \& Gaboriau (\cref{Dodziuk}).} $M$ odd-dimensional hyperbolic manifold of finite volume; $G = \pi_1(M)$. &
$\beta_i^{(2)}(G; \mathbb Q) = 0$ for all $i$. \\
\addlinespace

\textbf{\cref{Dodziuk conjecture}.} $M$ odd-dimensional hyperbolic manifold of finite volume; $G = \pi_1(M)$ RFRS. &
$\beta_i^{(2)}(G;\mathbb K) = 0$ for all $i$ and all fields $\mathbb K$. \\
\addlinespace

\textbf{\cref{RFRS conj}.} $M$ odd-dimensional hyperbolic manifold of finite volume; $G = \pi_1(M)$  RFRS. &
$M$ is virtually fibred. \\
\addlinespace

\textbf{\cref{audacious conj}.} $M$ odd-dimensional hyperbolic manifold of finite volume. &
$M$ is virtually fibred. \\
		\bottomrule
	\end{tabularx}
	\caption{Summary of the main theorems and conjectures.} \label{table}
\end{table}

\section{Fibring}
\label{sec fibring}
\subsection{Fibring of manifolds}
A very general principle that one often uses is to try to understand a complicated object in terms of its constituent building blocks and the interactions between them. In the realm of manifolds, one such technique is to look for fibrations.

\begin{definition}
	Let $M$ and $B$ be smooth  manifolds. A smooth map $f \colon M \to B$ is a \emph{fibre bundle over $B$} if and only if there exists a smooth manifold $F$ such that for every $x \in B$ there exists an open neighbourhood $U$ of $x$ and a diffeomorphism $U \times F \to f^{-1}(U)$ such that composing this diffeomorphism with $f$ gives the projection onto the first factor $U \times F \to U$.
\end{definition}

The manifold $B$ is the \emph{base}, and $F$ the \emph{fibre}; it is immediate that $\dim M = \dim B + \dim F$. Throughout the survey, we are interested only in $B$ being the circle $\mathbb{S}^1$. Hence we will never specify the base, and it will always be $\mathbb S^1$. Moreover, as is common in low-dimensional topology, we will call the map $f$ above a \emph{fibration}, and we will say that $M$ \emph{fibres} if and only if it admits such a fibration.

If a  smooth  $n$-manifold $M$ fibres  with fibre $F$, then $M$ is diffeomorphic to the \emph{mapping torus} of a diffeomorphism $m \colon F\to F$ (known as the \emph{monodromy}), where the mapping torus is defined to be
 \[F \times [0,1]/\!\!\sim\]
 with $\sim$ being the finest equivalence relation satisfying $(p,0) \sim (m(p),1)$ for every $p \in F$. This reduces the task of understanding $M$ to that of understanding the pair $(F,m)$, which can be thought of as a dynamical system.

A good example of using this technique to great advantage in practice is the study of compact $3$-manifolds, since we have a deep understanding of both compact $2$-manifolds (surfaces), and their diffeomorphism -- up to smooth isotopy, we can use the Nielsen--Thurston classification of mapping classes of surfaces. We do not lose anything by considering $m$ up to smooth isotopy, since the resulting mapping tori are diffeomorphic. An example of this approach is Thurston's Hyperbolisation Theorem for the fibred case, see \cite{Sullivan1981}.

\begin{thm}[Thurston]
	If $M$ is a closed $3$-manifold that fibres with fibre $F$ and monodromy $m$, then $M$ admits a hyperbolic metric if and only if $m$ defines a pseudo-Anosov mapping class of the surface $F$.
\end{thm}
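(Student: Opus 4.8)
The plan is to handle the two implications separately; throughout one may assume $\chi(F)<0$, since when $\chi(F)\geqslant 0$ there are no pseudo-Anosov classes and the mapping torus of $F$ is never hyperbolic, so both sides of the equivalence fail.

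\emph{Hyperbolic $\Rightarrow$ pseudo-Anosov.} This is the routine direction. By the Nielsen--Thurston classification the isotopy class of $m$ is periodic, reducible, or pseudo-Anosov. If it is periodic, then a cyclic cover of $M$ along the fibration is $F\times\mathbb S^1$, which contains an essential torus and so is not hyperbolic; since hyperbolicity passes to covers, $M$ is not hyperbolic either. If $m$ is reducible, it preserves up to isotopy an essential multicurve $c\subset F$, whose mapping torus sits inside $M$ as an essential torus or Klein bottle, contradicting atoroidality of closed hyperbolic $3$-manifolds. Hence $m$ is pseudo-Anosov.

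\emph{Pseudo-Anosov $\Rightarrow$ hyperbolic.} Here I would follow Thurston's proof through quasi-Fuchsian deformation theory. Write $\pi_1 M=\pi_1 F\rtimes_{m_\ast}\mathbb Z=\langle\pi_1 F,\,t\mid t\gamma t^{-1}=m_\ast(\gamma)\rangle$. The first step is a reduction: it is enough to produce a discrete, faithful $\rho_0\colon\pi_1 F\to\mathrm{PSL}_2(\mathbb C)$ and an element $g\in\mathrm{PSL}_2(\mathbb C)$ with $g\,\rho_0(\gamma)\,g^{-1}=\rho_0(m_\ast\gamma)$ for all $\gamma\in\pi_1 F$; then $\rho_0$ together with $\rho_0(t):=g$ defines a representation of $\pi_1 M$, and -- using that $\mathbb H^3/\rho_0(\pi_1 F)$ is homeomorphic to $F\times\mathbb R$ -- a covering-space argument shows this representation is discrete and faithful with quotient a closed hyperbolic $3$-manifold, which can only be the mapping torus of $m$, namely $M$; Mostow rigidity supplies uniqueness. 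Such a $\rho_0$ is precisely a fixed point of the action of the mapping class $m$ on quasi-Fuchsian space $QF(F)\cong\mathrm{Teich}(F)\times\mathrm{Teich}(F)$, or on its closure. There is no fixed point in the interior, because $m$, being pseudo-Anosov, has infinite order, and the mapping class group acts on $\mathrm{Teich}(F)$ with finite point stabilisers; so any fixed point lies on the boundary. To find it, fix $X_0\in\mathrm{Teich}(F)$ and let $\Gamma_n$ be the quasi-Fuchsian group with end invariants $(m_\ast^{-n}X_0,\,m_\ast^{n}X_0)$. Since $m$ is pseudo-Anosov, $m_\ast^{n}X_0$ and $m_\ast^{-n}X_0$ converge, as $n\to\infty$, in Thurston's compactification of $\mathrm{Teich}(F)$ to the projective classes of the two invariant measured laminations of $m$, and the characteristic feature of pseudo-Anosov maps is that these laminations jointly \emph{fill} $F$. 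Thurston's Double Limit Theorem then ensures that, after passing to a subsequence and conjugating, the $\Gamma_n$ converge algebraically to a doubly degenerate surface group $\Gamma_\infty$ having those two laminations as ending invariants. Applying $m$ to the end invariants of $\Gamma_n$ leaves their limiting laminations unchanged, so $\Gamma_\infty$ is a fixed point of $m$; this yields the conjugating isometry $g$, and the construction is complete.

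The hard part, and essentially the whole geometric content, is the Double Limit Theorem. One has to rule out the $\Gamma_n$ degenerating along a subsequence to a representation with smaller image -- for instance one factoring through a free quotient of $\pi_1 F$ -- while simultaneously identifying the limiting hyperbolic structure. I would argue in the now-standard way: a rescaled algebraic limit gives an isometric action of $\pi_1 F$ on an $\mathbb R$-tree with small edge stabilisers; Skora's theorem identifies such actions as dual to measured laminations on $F$; and the fact that the stable and unstable laminations of a pseudo-Anosov map jointly fill $F$ is exactly what prevents both of them from being compatible with such a small action, forcing convergence rather than degeneration. The residual points -- extracting the monodromy isometry $g$ cleanly and verifying that $\langle\Gamma_\infty,g\rangle$ is cocompact -- can be dealt with inside the same limiting analysis, or else sidestepped entirely by invoking Thurston's hyperbolisation of Haken manifolds (or Perelman's geometrisation), since the first implication already shows that a pseudo-Anosov monodromy makes $M$ atoroidal and not Seifert fibred.
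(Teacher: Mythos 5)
The paper states this theorem without proof, citing \cite{Sullivan1981}, so there is no internal argument to compare against. Your sketch follows the now-standard presentation of Thurston's original proof via quasi-Fuchsian deformation, iteration of the monodromy on Teichm\"uller space, and the Double Limit Theorem. The easy direction and the broad outline of the hard direction are correct, and the appeal to the Skora/$\mathbb{R}$-tree machinery for ruling out degenerate limits is the right idea.

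Two points deserve flagging. First, your proposal to sidestep the final verification by ``invoking Thurston's hyperbolisation of Haken manifolds'' is circular: Thurston's Haken hyperbolisation proof splits into the fibred and non-fibred cases, and the fibred case is precisely the theorem you are trying to prove. Invoking Perelman's geometrisation is a legitimate, non-circular escape route (and it does give the cleanest proof today), but then most of the double limit argument becomes unnecessary and you should be explicit that you are replacing the whole of the hard direction, not merely patching its tail. Second, the remark that ``the first implication already shows that a pseudo-Anosov monodromy makes $M$ atoroidal and not Seifert fibred'' is not what that implication established: you showed that periodic or reducible monodromy produces an essential torus or Seifert structure, which is the contrapositive of ``hyperbolic $\Rightarrow$ pA.'' What is needed for the geometrisation appeal is the converse, ``pA $\Rightarrow$ atoroidal and non-Seifert,'' and although this is true and the argument is closely related (one classifies essential tori in a fibred $3$-manifold and shows they all arise from invariant multicurves or periodicity), it is a separate step and must be argued, not inherited from the other direction.

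Beyond these, the key technical points you flag -- establishing that the algebraic limit $\Gamma_\infty$ is a genuine doubly degenerate surface group, extracting the conjugating isometry $g$, and verifying that $\langle\Gamma_\infty,g\rangle$ is discrete and cocompact -- are indeed where the real content lies. Your sketch names the right tools but does not supply them, which is acceptable in an outline but should be acknowledged as the bulk of the work; in Thurston's original treatment this last step hinges on his analysis of geometrically tame ends, and more modern treatments lean on the Ending Lamination Theorem to identify the limit and obtain the needed rigidity.
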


At this stage, the reader is hopefully convinced that it is desirable to  be able to show that a manifold fibres. But how should one do it? For $3$-manifolds there is a beautiful and simple algebraic condition, given by Stallings \cite{Stallings1961}.

\begin{thm}[Stallings]
	\label{Stallings}
	Let $M$ be a compact connected $3$-manifold. Let $G = \pi_1(M)$ and let $\phi \colon G \to \mathbb{Z}$ be an epimorphism. There exists a fibration $f \colon M \to \mathbb S^1$ with $f_\ast = \phi$ if and only if the kernel of $\phi$ is finitely generated.  
\end{thm}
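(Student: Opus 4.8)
The forward implication I would dispatch first and quickly: if $f \colon M \to \mathbb S^1$ is a fibration with fibre $F$, then $M$ is the mapping torus of the monodromy, $F$ is a compact surface (as $M$ is a compact $3$-manifold), so $\pi_1(F)$ is finitely generated; and the mapping-torus short exact sequence $1 \to \pi_1(F) \to \pi_1(M) \to \pi_1(\mathbb S^1) \to 1$ --- with $F$ connected because $f_\ast = \phi$ is onto --- identifies $\ker\phi$ with $\pi_1(F)$.

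For the converse the plan is as follows. After a standard reduction (Kneser--Milnor plus the sphere theorem) I would assume $M$ irreducible, the only other relevant prime being $\mathbb S^1 \times \mathbb S^2$, which fibres over $\mathbb S^1$ with fibre $\mathbb S^2$. Since $\mathbb S^1 = K(\mathbb Z, 1)$ I would realise $\phi$ by a smooth map $f \colon M \to \mathbb S^1$ made transverse to a regular value, so that $S := f^{-1}(\mathrm{pt})$ is a compact, two-sided, properly embedded surface Poincar\'e dual to $\phi$. Then I would simplify $S$: compress it repeatedly and discard sphere components (they bound balls, by irreducibility) to make every component $\pi_1$-injective, and, using that $\phi$ is primitive, arrange in addition that $S$ is connected and of minimal genus among such surfaces. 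Cutting $M$ along $S$ produces a connected compact irreducible $3$-manifold $N$ bounded (in part) by two incompressible copies $S^\pm$ of $S$, and $\pi_1(M)$ becomes the HNN extension
\[
\pi_1(M) = \bigl\langle\, \pi_1(N),\, t \,\bigm|\, t\, i_+(\gamma)\, t^{-1} = i_-(\gamma)\ (\gamma \in \pi_1(S))\,\bigr\rangle,
\]
with $i_\pm \colon \pi_1(S) \hookrightarrow \pi_1(N)$ injective, $\phi$ trivial on $\pi_1(N)$, and $\phi(t) = 1$.

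The core of the argument is a group-theoretic observation I would isolate as a lemma: in an HNN extension $G = \langle A, t \mid t C_1 t^{-1} = C_2\rangle$ with $C_1, C_2 \leqslant A$ finitely generated, the kernel of the map $G \to \mathbb Z$ killing $A$ and sending $t \mapsto 1$ is (by Bass--Serre theory) the bi-infinite iterated amalgam $\cdots \ast_C A \ast_C A \ast_C \cdots$ along a line, and a count of generators shows this is finitely generated exactly when $C_1 = C_2 = A$. Feeding in that $\pi_1(N)$ and $\pi_1(S)$ are finitely generated, the hypothesis on $\ker\phi$ forces $i_+(\pi_1(S)) = i_-(\pi_1(S)) = \pi_1(N)$; hence $\ker\phi = \pi_1(N)$ coincides with the image of $\pi_1(S)$, and both inclusions $S^\pm \hookrightarrow N$ induce isomorphisms on $\pi_1$.

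Finally I would upgrade this to topology. Since $M$ is irreducible with infinite fundamental group it is aspherical, hence so is $N$ (the finitely many exceptional small fibres --- $\mathbb S^2$, torus, Klein bottle, and disc, annulus, M\"obius band when $\partial M \neq \varnothing$ --- I would treat separately), the inclusions $S^\pm \hookrightarrow N$ are homotopy equivalences, and $(N; S^+, S^-)$ is an $h$-cobordism between the incompressible surface $S$ and itself. Invoking Waldhausen's classification of Haken $3$-manifolds, this gives $N \cong S \times [0,1]$ with $S^\pm = S \times \{0,1\}$; regluing $S^+$ to $S^-$ by the cutting map exhibits $M$ as the mapping torus of a homeomorphism $S \to S$, and the mapping-torus projection is a fibration inducing $\phi$ on $\pi_1$ (after composing with a degree $-1$ self-map of $\mathbb S^1$ if the sign is wrong). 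I expect this last step to be the main obstacle: moving from fundamental-group data that is of ``product type'' to a genuine product decomposition of $N$ rests on the full Haken machinery (loop and sphere theorems, Waldhausen rigidity), and the failure of any such implication in higher dimensions without extra $K$-theoretic hypotheses is precisely the content of \cref{Farrell}. A lesser nuisance is the surgery bookkeeping needed to make $S$ simultaneously incompressible, connected, and minimal-genus.
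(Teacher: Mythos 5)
Your proposal is a correct outline of the original, geometric proof of Stallings: realise $\phi$ by a map to $\mathbb S^1$, take a regular preimage surface, compress it to an incompressible representative, cut along it, identify $\pi_1(M)$ as an HNN extension over the surface group, use the Bass--Serre/Bieri--Strebel observation that the kernel of the obvious map to $\mathbb Z$ from an HNN extension $\langle A, t \mid tC_1t^{-1}=C_2\rangle$ (with $A$, $C_i$ finitely generated) is finitely generated iff $C_1=C_2=A$, and finally invoke Waldhausen's product/$h$-cobordism theorem for Haken manifolds to upgrade the algebraic conclusion $N \simeq S \times [0,1]$ to a genuine product structure and hence a fibration. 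This is correct in substance, and you rightly flag where the real work lies (the surgery bookkeeping, and the passage from fundamental-group data to a product structure).

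The paper, however, does something quite different: the proof sketched at the end of Section 5.1 is algebraic and goes through the machinery the survey develops. From the hypothesis that $\ker\phi$ is finitely generated one obtains, via the BNS characterisation (\cref{BNS}) and Sikorav's theorem (\cref{Sikorav}), vanishing of Novikov homology of $G$ with respect to $\pm\phi$ in degrees $0$ and $1$; \cref{Novikov cohom} converts this to vanishing of Novikov \emph{co}homology in those degrees; Poincar\'e duality for the $3$-manifold then forces vanishing of \emph{all} Novikov (co)homology; feeding this into the long exact sequence \eqref{les} as in the proof of \cref{PD single K thm} shows that $\ker\phi$ is a $\mathrm{PD}^2$-group; \cref{PD2} identifies it as a surface group; and Dehn--Nielsen--Baer realises the monodromy as a mapping class, giving the fibration (the final step from $\pi_1$-data to a homeomorphism of $M$ still leans on $3$-manifold rigidity, just as in your approach). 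Your geometric route produces the fibre surface directly via transversality and uses Haken-manifold topology at every stage; the paper's algebraic route isolates exactly the homological content of Stallings's theorem, and it is this version that generalises to the higher-dimensional Poincar\'e-duality setting which is the subject of \cref{PD conj,PD thm,PD single K thm}. So both are correct, but the paper's argument is the one that fits the survey's framework and points toward the rest of the paper; yours is closer to Stallings's original and would not immediately suggest how to go beyond dimension $3$.
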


In his paper, Stallings has two additional assumptions: that $M$ is irreducible, and that $\ker \phi \not\cong \mathbb{Z}/2\mathbb{Z}$. Both are unnecessary, in view of the resolution of the Poincar\'e Conjecture by Perelman \cite{Perelman2002,Perelman2003}.

Stallings's criterion reduces the topological question of fibring (for $3$-manifolds) to a group-theoretic one. In high dimensions, a similar result was proved by Farrell in his thesis \cite{Farrell1967}.

\begin{thm}[Farrell]
	\label{Farrell}
Let $M$ be a closed connected smooth manifold of dimension at least $6$ with fundamental group $G$, and let $\phi \colon G \to \mathbb{Z}$ be an epimorphism. There exists a fibration $f \colon M \to \mathbb S^1$ with $f_\ast = \phi$ if and only if the following three conditions hold:
\begin{enumerate}
	\item the covering space of $M$ corresponding to $\ker \phi$ is finitely dominated,
	\item $c(\phi)=0$, and
	\item $\tau(\phi) = 0$.
\end{enumerate}
\end{thm}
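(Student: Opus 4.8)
The plan is to reduce the existence of the fibration to an application of the $s$-cobordism theorem, following Farrell's thesis \cite{Farrell1967}. Since $\phi \in \mathrm{H}^1(M;\mathbb Z) = [M,\mathbb S^1]$, the first step is to represent $\phi$ by a smooth map $g \colon M \to \mathbb S^1$ which, after a small perturbation, is transverse to a point $p \in \mathbb S^1$; then $N = g^{-1}(p)$ is a closed codimension-one submanifold of $M$ Poincar\'e dual to $\phi$, and cutting $M$ along $N$ produces a compact cobordism $(W;N_0,N_1)$ with $N_0 \cong N_1 \cong N$ whose $\mathbb Z$-indexed copies, glued end to end by the deck transformation, assemble into the infinite cyclic cover $\overline M$ of $M$ corresponding to $\ker\phi$. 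As $g$ restricted to $M\setminus N$ lands in a contractible arc, the image of $\pi_1 W$ in $\pi_1 M = G$ lies in $\ker\phi$; using that $\ker\phi$ is finitely presented — a consequence of hypothesis (1) — one performs preliminary $0$- and $1$-surgeries on $N$ inside $M$ so that, after renaming, $N$ is connected and the maps $\pi_1 N \to \pi_1 W \to \ker\phi$ are all isomorphisms. The key reformulation is now that $M$ fibres with $f_\ast = \phi$ if and only if $N$ can be further modified so that the resulting $W$ becomes diffeomorphic, rel $N_0$, to the product $N \times [0,1]$: reassembling such product blocks under the $\mathbb Z$-action exhibits $M$ as a mapping torus of a self-diffeomorphism of $N$.

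It remains to modify $N$ until $W$ is a product, and this is carried out by surgery on $N$ in the interior of $W$, working up through the dimensions over the group ring $\mathbb Z[\ker\phi]$. With $\pi_1$ already normalised, one studies the relative homology $\mathrm{H}_\ast(\widetilde W, \widetilde N_0)$ of universal covers: Poincar\'e--Lefschetz duality reduces the problem to killing this homology below the middle dimension — and since $\dim M \geqslant 6$, the Whitney trick needed to do so is available — while hypothesis (1), the finite domination of $\overline M$, is exactly what ensures that the relevant relative chain complex is, below the middle range, chain homotopy equivalent to a finite complex of finitely generated free modules, so that these lower surgeries can actually be performed. The residual obstruction in the middle dimension, read off $\mathbb Z$-equivariantly and analysed through the Bass--Heller--Swan/Farrell--Siebenmann description of $\widetilde K_0$ and $\mathrm{Wh}$ of the twisted Laurent ring $\mathbb Z[\ker\phi]_\alpha[t^{\pm1}] \cong \mathbb Z[G]$, is precisely the class $c(\phi)$, a finiteness obstruction in a group built from $\widetilde K_0(\mathbb Z[\ker\phi])$ and the $\mathbb Z$-action. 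When $c(\phi)=0$ the surgeries go through and yield a new cut $N'$ for which the new $W'$ is an $h$-cobordism; its Whitehead torsion is the class $\tau(\phi) \in \mathrm{Wh}(\ker\phi)$, so if moreover $\tau(\phi)=0$ then the $s$-cobordism theorem, applicable because $\dim W' = \dim M \geqslant 6$, gives $W' \cong N' \times [0,1]$, and the fibration is constructed.

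For necessity, suppose $M$ fibres with fibre $F$ and $f_\ast = \phi$. Then $\overline M$ is diffeomorphic to $F \times \mathbb R$, hence homotopy equivalent to the closed manifold $F$ and in particular a finite complex, so (1) holds; and since $F$ itself serves as a cut of $M$ realising $\phi$ with $W \cong F \times [0,1]$, the obstructions $c(\phi)$ and $\tau(\phi)$ — defined so as to vanish whenever such a product cut exists — are both zero, giving (2) and (3). The step demanding genuine care, and the main obstacle, is the middle-dimensional surgery: one must check that every lower surgery can truly be carried out (this is where finite domination of $\overline M$ is genuinely stronger than, and needed beyond, mere finite generation of its homology), that the middle-dimensional invariant so produced is independent of the many choices of $g$, of $N$, and of the surgeries, and that it coincides with $c(\phi)$ as defined via the algebraic $K$-theory of $\mathbb Z[G]$; setting up $c(\phi)$ and $\tau(\phi)$ coherently in the first place is itself a substantial part of the work, and is where the real difficulty lies.
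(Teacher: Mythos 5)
The paper does not prove Farrell's theorem: it is cited from Farrell's thesis \cite{Farrell1967}, and the survey adds only the remark that $c$ and $\tau$ take values in trivial groups when $\mathrm{Wh}(G)=0$ (Farrell--Hsiang \cite{FarrellHsiang1970}, Theorem 21, and Siebenmann \cite{Siebenmann1970}), which is what feeds into \cref{Farrell aspherical}. Your sketch is a faithful outline of Farrell's original splitting argument: represent $\phi$ transversally, cut along $N$ to get the fundamental cobordism $W$, normalize $\pi_1 N$, perform surgery up through the dimensions with the finiteness supplied by hypothesis (1), read off $c(\phi)$ as the remaining finiteness obstruction and $\tau(\phi)$ as a Whitehead torsion, and apply the $s$-cobordism theorem; the converse direction, via the fibre as a perfect cut, is also correct.

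Two points are slightly off. First, the preliminary $\pi_1$-normalization of the cut requires $2$-surgeries as well as $0$- and $1$-surgeries: the latter give connectivity and surjectivity of $\pi_1 N \to \ker\phi$, but injectivity needs embedded discs to kill the kernel, and this is already one place where $\dim M \geqslant 5$ and the Whitney trick are used. Second, $\tau(\phi)$ does not literally live in $\mathrm{Wh}(\ker\phi)$: the torsion of the resulting $h$-cobordism depends on the choice of cut, so the well-defined invariant sits in a quotient of $\mathrm{Wh}(\ker\phi)$ that absorbs this indeterminacy and the conjugation action of $t$, and similarly $c(\phi)$ lives in a group built from $\widetilde K_0(\mathbb Z[\ker\phi])$ rather than in $\widetilde K_0$ itself. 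The point of Farrell--Hsiang's Theorem 21 is that these obstruction groups embed into summands of $\mathrm{Wh}(G)$ under the Bass--Heller--Swan decomposition of $K_*$ of the twisted Laurent ring $\mathbb Z[\ker\phi]_\alpha[t^{\pm 1}] \cong \mathbb Z G$. You already flag the coherent set-up of $c$ and $\tau$, and the independence of choices, as where the real difficulty lies; that assessment is exactly right, and your sketch is honest about what it leaves unfinished.
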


The two obstructions $c$ and $\tau$ are defined in a $K$-theoretic fashion. It will not be terribly important for us what their precise definition is, since it was shown by Farrell--Hsiang \cite{FarrellHsiang1970}, Theorem 21, and independently  Siebenmann \cite{Siebenmann1970}, that if the Whitehead group $\mathrm{Wh}(G)$ of $G$ vanishes, then $c$ and $\tau$ also take value in the trivial abelian group $0$. 

The theorem is true verbatim also when $M$ has connected boundary that itself fibres over the circle; in this case we need to know that $\phi$ is induced by some smooth map $M \to \mathbb{S}^1$ that restricts to a fibration on the boundary. The situation is particularly straightforward when the boundary is a torus, since then it is enough to know that $\phi$ does not vanish on its fundamental group.

If $\phi$ is induced by a fibration, then $\ker \phi$ is the fundamental group of a compact manifold, and so the condition of being finitely dominated can be changed to being of finite type. If $M$ is aspherical then so is the fibre, and hence we obtain the following special case of Farrell's theorem.

\begin{thm}
	\label{Farrell aspherical}
	Let $M$ be a closed aspherical smooth manifold of dimension at least $6$ with $G = \pi_1(M)$, and suppose that $\mathrm{Wh}(G) = 0$. Let $\phi \colon G \to \mathbb{Z}$ be an epimorphism. There exists a fibration $f \colon M \to \mathbb S^1$ with $f_\ast = \phi$ if and only if $\ker \phi$ admits a finite classifying space.
	\end{thm}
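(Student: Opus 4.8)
The plan is to deduce this from Farrell's theorem (\Cref{Farrell}), of which it is stated as a special case; the work is only to translate the hypothesis ``$\ker\phi$ admits a finite classifying space'' into Farrell's three conditions, and conversely to read off a finite classifying space from a fibration.

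For the ``only if'' direction, suppose $f\colon M \to \mathbb S^1$ is a fibration with $f_\ast = \phi$. Since $M$ is closed, the fibre $F$ is a closed smooth manifold, and since $\phi$ is an epimorphism the exact sequence of the fibration shows that $F$ is connected with $\pi_1(F) \cong \ker\phi$. I would then run the long exact sequence of homotopy groups of $F \hookrightarrow M \to \mathbb S^1$: using $\pi_n(M) = 0$ for $n \geq 2$ (asphericity of $M$) and $\pi_n(\mathbb S^1) = 0$ for $n \geq 2$, one gets $\pi_n(F) = 0$ for all $n \geq 2$, so $F$ is a closed aspherical manifold. A closed aspherical manifold is a finite classifying space for its fundamental group, so $F$ is a finite $K(\ker\phi,1)$, as required.

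For the ``if'' direction, assume $\ker\phi$ admits a finite classifying space $Y$. Let $\widehat M \to M$ be the (infinite cyclic) cover corresponding to $\ker\phi$. Since $M$ is aspherical, so is $\widehat M$, with $\pi_1(\widehat M) \cong \ker\phi$; being a $K(\ker\phi,1)$, it is homotopy equivalent to $Y$, hence to a finite CW complex, and in particular is finitely dominated. This is condition~(1) of \Cref{Farrell}. For conditions~(2) and~(3), I would invoke the results of Farrell--Hsiang \cite{FarrellHsiang1970} and Siebenmann \cite{Siebenmann1970} quoted above: since $\mathrm{Wh}(G) = 0$, the obstructions $c(\phi)$ and $\tau(\phi)$ take value in the trivial group, hence vanish. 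As moreover $\dim M \geq 6$, \Cref{Farrell} applies and produces the fibration $f$ with $f_\ast = \phi$.

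The main obstacle is essentially nonexistent once \Cref{Farrell} is available: the only genuine steps are the asphericity of the fibre in the forward direction (immediate from the homotopy long exact sequence) and the identification of the infinite cyclic cover with a finite $K(\pi,1)$ in the backward direction (uniqueness of classifying spaces up to homotopy equivalence). The real content — the surgery-theoretic passage from the $K$-theoretic data to an actual fibration — is imported wholesale from Farrell's theorem, and the vanishing of $c$ and $\tau$ from the Farrell--Hsiang and Siebenmann theorems.
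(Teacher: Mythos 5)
Your proposal is correct and follows the same route as the paper: deduce the statement as a special case of \Cref{Farrell}, using asphericity of $M$ to see that the fibre (equivalently the infinite cyclic cover) is a finite $K(\ker\phi,1)$, and using the Farrell--Hsiang/Siebenmann result together with $\mathrm{Wh}(G)=0$ to kill the obstructions $c(\phi)$ and $\tau(\phi)$. You spell out the long exact sequence and the identification with the cover a bit more explicitly than the paper, but the argument is the same.
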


This is now very much in the spirit of Stallings's result --  fibring is encoded in finiteness properties of the kernel. Also, the condition on $\mathrm{Wh}(G)$ vanishing is not very restrictive -- it is conjectured that $\mathrm{Wh}(G)= 0$ for every torsion-free group, and hence in particular for fundamental groups of aspherical manifolds. The vanishing of $\mathrm{Wh}(G)$ has been established for many groups, in particular for all torsion-free groups satisfying the Farrell--Jones conjecture.

\subsection{Fibring of groups}

As we have just seen, the question of whether an aspherical manifold fibres can be, under rather mild conditions, reduced to the study of finiteness properties of kernels of epimorphisms to $\mathbb{Z}$.  Finiteness properties come in two flavours, homotopic and homological.

\begin{definition}
For $n \in \mathbb{N} \cup \{\infty\}$, we say that a group $G$ is of \emph{type $\mathtt{F}_n$} if and only if it admits a classifying space with finitely many cells in dimension $i$ for every $i \leqslant n$. We say that $G$ is of \emph{type $\mathtt{F}$} if and only if it admits a classifying space with finitely many cells altogether.  
	\end{definition}
	
These are the homotopic finiteness properties. We see that \cref{Farrell aspherical} connects fibring to $\ker \phi$ being of type $\mathtt{F}$.

If one wants to compute group homology of $G$, one needs to start with a resolution of the trivial $\mathbb{Z} G$-module $\mathbb{Z}$ by projective modules, that is, one needs an exact sequence
\[
\dots \to C_n \to \dots \to C_1 \to C_0 \to \mathbb{Z}
\]
where every $C_i$ is a projective $\mathbb{Z} G$-module. Since we generally want to have the group acting on the left, we take $C_i$ to be \emph{left} modules. There are canonical constructions of such resolutions, for example the bar resolution, but for efficient computation one wants to take a resolution where the modules are as small as possible. If $G$ is of type $\mathtt{F}_n$, the cellular chain complex of the universal cover of a classifying space with finite $n$-skeleton gives us a resolution where all the modules $C_i$ for $i \leqslant n$ are finitely generated (they are also free, but this is less important here). Similarly, when $G$ is of type $\mathtt{F}$ we will get a resolution by finitely generated modules that is finite, that is, $C_i = 0$ for large enough $i$. There is nothing special about $\mathbb{Z}$ here -- we can run the entire discussion over our favourite base ring.

From homological perspective, this is all that having a small classifying space gives us. This motivates the definition of homological finiteness properties.

\begin{definition}
	Let $R$ be a ring.
For $n \in \mathbb{N} \cup \{\infty\}$, we say that a group $G$ is of \emph{type $\mathtt{FP}_n(R)$} if and only if the trivial $RG$-module $R$ admits a resolution $C_\bullet$ by projective $RG$-modules such that $C_i$ is finitely generated for every $i \leqslant n$.  
 We say that $G$ is of \emph{type $\mathtt{FP}(R)$} if and only if $R$ admits a finite resolution by finitely generated projective $RG$-modules.	
\end{definition}

One writes $\mathtt{FP}_n$ and $\mathtt{FP}$ for $\mathtt{FP}_n(\mathbb{Z})$ and $\mathtt{FP}(\mathbb{Z})$, respectively. We have implications
\[
\mathtt{F}_n \Rightarrow \mathtt{FP}_n \Rightarrow \mathtt{FP}_n(R) 
\]
for every ring $R$. (This is probably a good place to say that our rings are always unital, associative, and non-zero. Ring morphisms take identities to identities.)

For $n=1$, all three properties coincide -- they are all equivalent to the group $G$ being finitely generated. This is important, as it allows for homological proofs of \cref{Stallings}, see \cite{Hillman2020} and \cite{Bridsonetal2023}.

For $n \geqslant 2$, the implications are proper. First examples were given by Bestvina--Brady \cite{BestvinaBrady1997}, and now we have uncountably many groups of type $\mathtt{FP}_2$ that are not $\mathtt{F}_2$ (that is, finitely presented), as constructed by Leary \cite{Leary2018}.

In view of the above, the following definition should feel natural.

\begin{definition}[Algebraic fibring]
	An epimorphism $\phi \colon G \to \mathbb{Z}$ will be called an \emph{$\mathtt{FP}_n(R)$-fibration} if and only if its kernel has type $\mathtt{FP}_n(R)$. The group $G$ will be called \emph{$\mathtt{FP}_n(R)$-fibred} if and only if it admits an $\mathtt{FP}_n(R)$-fibration.
\end{definition}

Sometimes the terminology \emph{algebraic fibring} is used for $\mathtt{FP}_1$-fibring. The author prefers to use  `algebraic fibring' as an umbrella term for  $\mathtt{FP}_n(R)$-fibring for various values of $n$.

\subsection{BNS invariants and the Novikov rings}

In the discussion above, we have already seen the group ring $RG$. As an $R$-module, this is the free $R$-module with basis $G$, and hence elements can be written as finite formal sums of elements of $G$ (the basis) with coefficients in $R$. The multiplication on $G$ can be extended by linearity to $RG$ and makes it into a ring; the copy of $R$ by the identity element $1$ of $G$ turns $RG$ into an $R$-algebra. This construction does not use inverses in $G$, and can equally well be applied to monoids. The usual polynomial ring in one variable and coefficients in $R$ can be viewed this way as the monoid ring $R \mathbb{N}$.

Importantly, homological finiteness properties can be defined in exactly the same way for monoids using monoid rings instead of group rings. 

How to decide what finiteness properties the kernel $\ker \phi$ has for an epimorphism $\phi \colon G \to \mathbb{Z}$? A key insight was an observation of Bieri--Neumann--Strebel that instead of dealing with the kernel, it is much easier to study finiteness properties of monoid rings $R G_\phi$ where 
\[G_\phi = \{g \in G \mid \phi(g) \geqslant 0\},\]
 and doing this for both $\phi$ and $-\phi \colon g \mapsto -\phi(g)$ yields information about $\ker \phi$.

\begin{definition}[BNS invariants]
	Let $R$ be a ring and $G$ a group of type $\mathtt{FP}_n(R)$. The \emph{$n$th homological BNS invariant over $R$}, denoted $\Sigma^n(G;R)$, is the subset of $\mathrm{H}^1(G;\mathbb{R}) \smallsetminus \{0\}$ consisting precisely of those homomorphisms $\phi \colon G \to \mathbb{R}$ for which $G_\phi$ is a monoid of type $\mathrm{FP}_n(R)$.
\end{definition}
	Here, $\mathrm{H}^1(G;\mathbb{R})$ is identified with the set of homomorphisms $G \to \mathbb{R}$ and is endowed with the obvious topology (when $G$ is finitely generated). The monoid $G_\phi$ is defined exactly as for maps to the integers. The definition for $n=1$ was formulated in \cite{Bierietal1987}; for higher $n$ it is due to Bieri--Renz \cite{BieriRenz1988}.
	
	One can also define homotopic BNS invariants, but this is more involved; for an approach using closed $1$-forms see \cite{Farberetal2010}. In fact, one of the challenges of the theory presented here is to find analogues for the discussion that will follow in the homotopic setting.

There are four key properties that the BNS invariants exhibit.

\begin{thm}[\cite{Bierietal1987,BieriRenz1988}]
	\label{BNS}
		Let $R$ be a ring, and $G$ a group of type $\mathtt{FP}_n(R)$ with $n \geqslant 1$.
		\begin{enumerate}
			\item $\Sigma^n(G;R)$ is an open subset of $\mathrm{H}^1(G;\mathbb{R})$.
					\item $\Sigma^n(G;R)$ is invariant under positive homothety $\phi \mapsto \lambda \phi$, $\lambda > 0$.	
			\item For every $S \subseteq \mathrm{H}^1(G;\mathbb{R})$, the group $K = \bigcap_{\phi \in S} \ker \phi$ is of type $\mathtt{FP}_n(R)$ if and only if
			\[
			\{ \psi \in \mathrm{H}^1(G;\mathbb{R}) \mid K \leqslant \ker \psi \} \subseteq \Sigma^n(G;R).
			\]
		\end{enumerate}
\end{thm}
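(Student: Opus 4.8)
\medskip\noindent\emph{Proof proposal.} The plan is to treat the three items in increasing order of difficulty. Item (2) is immediate: for $\lambda>0$ one has $G_\phi=G_{\lambda\phi}$ as submonoids of $G$, so $\phi$ and $\lambda\phi$ satisfy the defining condition of $\Sigma^n(G;R)$ simultaneously. For the other two items I would first isolate the algebra of the monoid $G_\phi$ in the case that concerns (3), namely $\phi\colon G\twoheadrightarrow\mathbb Z$. Choosing $t\in G$ with $\phi(t)=1$ splits $\phi$, so $G\cong K\rtimes\mathbb Z$ with $K=\ker\phi$ and $G_\phi\cong K\rtimes\mathbb N$; hence $RG$ is the Ore localisation of $RG_\phi$ obtained by inverting the powers of $t$, and in particular $RG$ is flat over $RG_\phi$. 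Secondly, $G_\phi\cap G_{-\phi}=K$ and $G_\phi\cup G_{-\phi}=G$, which produces a Mayer--Vietoris short exact sequence of $RK$-bimodules
\[
0\longrightarrow RK\longrightarrow RG_\phi\oplus RG_{-\phi}\longrightarrow RG\longrightarrow 0 .
\]

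To prove (3) I would replace $S$ by $V=\{\psi\in\mathrm H^1(G;\mathbb R)\mid K\leqslant\ker\psi\}$ -- a linear subspace containing $S$ with the same intersection of kernels -- observe that $Q=G/K$ embeds into a product of copies of $\mathbb R$ and is therefore free abelian of rank $r=\dim V$, and reduce to showing: $K$ is of type $\mathtt{FP}_n(R)$ iff $V\smallsetminus\{0\}\subseteq\Sigma^n(G;R)$. The core is the case $r=1$, where $K=\ker\phi$ and the claim reads $\{\phi,-\phi\}\subseteq\Sigma^n(G;R)$ iff $K$ is $\mathtt{FP}_n(R)$. For ``$\Leftarrow$'' one applies closure of $\mathtt{FP}_n(R)$ under extensions to the monoid extension $1\to K\to G_\phi\to\mathbb N\to 1$ and to its mirror, using that $\mathbb N$ is of type $\mathtt{FP}$ over any ring. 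For ``$\Rightarrow$'' I would invoke Brown's criterion: take a free $RG$-resolution of $R$ finitely generated in degrees $\leqslant n$, realise it as a free $G$-complex $\widetilde X$ carrying a $\phi$-equivariant height function $h$, note that $\pm\phi\in\Sigma^n(G;R)$ says precisely that the two one-sided sub-level systems $h^{-1}[a,\infty)$ and $h^{-1}(-\infty,a]$ are essentially $(n-1)$-acyclic over $R$, whereas $K$ being $\mathtt{FP}_n(R)$ says the two-sided system $h^{-1}[-a,a]$ is -- and then deduce the latter from the former by a Mayer--Vietoris argument across the middle, using that $\widetilde X$ itself is already $(n-1)$-acyclic over $R$. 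For general $r$ one runs the analogous Morse/Brown analysis over the free-abelian cover $Q\cong\mathbb Z^r$, controlling all directions of $V$ at once; the reverse implication at the irrational directions of $V$ additionally uses the openness of (1) together with the observation that $\ker\psi\supseteq K$ with $\ker\psi/K$ finitely generated free abelian, so that $\ker\psi$ is automatically of type $\mathtt{FP}_n(R)$ for every $\psi\in V$.

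For openness (1) I would argue that membership $\phi\in\Sigma^n(G;R)$ is certified by finitely much data -- a finitely generated free partial resolution of $R$ over $RG_\phi$ in degrees $\leqslant n$ together with a partial contracting homotopy -- and that, after multiplying this data through by a sufficiently high power of $t$, every group element occurring in it acquires a strictly positive $\phi$-value. Strict positivity on a finite subset of $G$ is an open condition on $\mathrm H^1(G;\mathbb R)$, and for every $\psi$ in the resulting neighbourhood of $\phi$ the same data lives over $RG_\psi$ and certifies $\psi\in\Sigma^n(G;R)$.

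The step I expect to be the real obstacle is the ``$\Rightarrow$'' half of the rank-one case of (3): recovering finiteness of $K$ over $RK$ from the one-sided data over $RG_\phi$ and $RG_{-\phi}$. This is Brown's criterion in the present guise and carries essentially all the content of the theorem, with the bookkeeping in the top degree $n$ -- where $\mathtt{FP}_n$ is not witnessed by finitely many generators and relations in the naive sense -- being the delicate point; this is precisely where one leans on the work of Bieri--Neumann--Strebel and Bieri--Renz. The normalisation used for (1) in degree $n$, and the treatment of the irrational directions in (3), are secondary difficulties of the same flavour.
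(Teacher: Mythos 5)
The paper does not prove this theorem; it is stated as a citation to \cite{Bierietal1987} and \cite{BieriRenz1988}, so there is no in-paper argument to compare yours against. Judged on its own terms, your outline does track the shape of the Bieri--Neumann--Strebel/Bieri--Renz proof: (2) is indeed immediate from $G_\phi=G_{\lambda\phi}$; the heart of (3) is the rank-one rational case handled via a filtration/Brown-type criterion; and the general case reduces to this via the linear span $V$, density of rational directions, openness, and the fact that $\ker\psi/K$ is finitely generated free abelian so that the finiteness property of $K$ lifts to $\ker\psi$.

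Two caveats worth flagging. First, your openness certificate is stated as ``a finitely generated free partial resolution of $R$ over $RG_\phi$ in degrees $\leqslant n$ together with a partial contracting homotopy''. Exactness up to degree $n$ is not literally witnessed by a contracting homotopy of a truncated complex (that would certify contractibility, which is too strong); Bieri--Renz instead fix one free $RG$-resolution $F_\bullet$ of $R$ with bases and work with a chain endomorphism $\epsilon\colon F_\bullet\to F_\bullet$ lifting $\mathrm{id}_R$ together with a homotopy $\epsilon\simeq\mathrm{id}$, the certificate being that $\epsilon$ strictly increases the $\phi$-valuation of the finitely many basis elements involved in degrees $\leqslant n$. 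Your ``multiply through by a high power of $t$'' then becomes the right intuition once this is set up, and it does extend to irrational $\phi$ (one picks $t\in G_\phi$ with $\phi(t)$ large rather than $\phi(t)=1$), but as written the certificate claim over-promises. Second, the short exact sequence $0\to RK\to RG_\phi\oplus RG_{-\phi}\to RG\to 0$ is a correct and pleasant observation, but you never actually use it; the extension argument you give for ``$\Leftarrow$'' in the rank-one case and the filtration argument for ``$\Rightarrow$'' do the work, so this sequence is a red herring in the exposition (it is more reminiscent of the Novikov-ring long exact sequence appearing later in the survey, namely $0\to RG\to\widehat{RG}^\phi\oplus\widehat{RG}^{-\phi}\to\prod_{i}RK\,t^i\to 0$, which is a different gadget).
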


We are not going to be particularly concerned with maps other than epimorphism $G \to \mathbb{Z}$, but the openness property is important, and the above framework makes it more natural to state it.

The introduction of the fourth property requires some more work. Given a group $G$ and an epimorphism $\phi \colon G \to \mathbb{Z}$, we pick $t \in G$ such that $\phi(t) = 1$. Let $\alpha$ be an automorphism of $K = \ker \phi$ defined by $\alpha(k) = tkt^{-1}$. We will use the same symbol to denote the extension of $\alpha$ to an isomorphism of $R$-algebras $RK \to RK$. We  define the \emph{twisted Laurent polynomial ring} with coefficients in $R K$, variable $t$, and twisting $\alpha$, to be the free $R K$-module with basis $\{ t^i \mid i \in \mathbb{Z}\}$. We now endow it with a ring structure by extending by $R$-linearity the rule
\[
xt^i \cdot yt^j = x \alpha^{i}(y)t^{i+j}. 
\]  

It is not hard to see that this ring is isomorphic to $RG$ by the map that sends $RK$ to itself identically, and sends the formal symbol $t^i$ to the element $t^i \in G$.

\begin{definition}[Novikov ring]
We define the \emph{Novikov ring} $\widehat {R G}^\phi$ to be the ring of twisted Laurent series with coefficients in $R K$, variable $t$, and twisting $\alpha$.
\end{definition}

The only difference to the construction above (which yold the group ring $RG$) is that we now allow formal sums of powers of $t$ with coefficients in $RK$ where the powers are all distinct and bounded only from below, rather than bounded from below and above.

\begin{thm}[Sikorav \cite{Sikorav1987}, Fisher \cite{Fisher2024}]
	\label{Sikorav}
	Let $G$ be a group of type $\mathtt{FP}_n(R)$. An epimorphism $\phi \colon G \to \mathbb{Z}$ belongs to $\Sigma^n(G;R)$ if and only if 
	\[
	\mathrm{H}_i(G; \widehat{R G}^\phi) = 0 \textrm{ for all } i \leqslant n.
	\]
\end{thm}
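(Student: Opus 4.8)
The plan is to prove both directions by relating the finiteness properties of the monoid $G_\phi$ over $RG$ to the homology of $G$ with Novikov coefficients, using the algebraic machinery of the $\Sigma$-invariants established in \Cref{BNS} and \Cref{Sikorav}. First I would recall the characterisation of membership in $\Sigma^n(G;R)$ via the monoid ring: by definition, $\phi \in \Sigma^n(G;R)$ precisely when $G_\phi$ is a monoid of type $\mathtt{FP}_n(R)$, which in turn (by the Bieri--Renz criterion) is equivalent to the $RG_\phi$-module $R$ admitting a partial projective resolution that is finitely generated in degrees $\leqslant n$. The key observation, going back to Sikorav, is that the Novikov ring $\widehat{RG}^\phi$ is \emph{flat} over the monoid ring $RG_\phi$: every nonzero element of $RG_\phi$ whose lowest $t$-degree term is a unit becomes invertible in $\widehat{RG}^\phi$, and more care shows flatness in general. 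I would isolate this as the first lemma.

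Second, I would set up the homological comparison. Given any projective resolution $C_\bullet \to R$ of the trivial $RG$-module, one computes $\mathrm{H}_i(G;\widehat{RG}^\phi)$ as the homology of $\widehat{RG}^\phi \otimes_{RG} C_\bullet$. The point is to factor this tensor product through $RG_\phi$: write $\widehat{RG}^\phi \otimes_{RG} C_\bullet \cong \widehat{RG}^\phi \otimes_{RG_\phi} (RG_\phi \otimes_{RG} C_\bullet)$, noting $RG$ is free hence flat over $RG_\phi$. By flatness of $\widehat{RG}^\phi$ over $RG_\phi$ (the first lemma), the homology of the outer complex is $\widehat{RG}^\phi \otimes_{RG_\phi} \mathrm{H}_\ast(G_\phi; RG_\phi)$-type data; more precisely, vanishing of $\mathrm{H}_i(G_\phi; RG_\phi)$ for $i \leqslant n$ — which is exactly the $\mathtt{FP}_n$ condition for the monoid after a dimension-shifting argument on the augmentation ideal — forces vanishing of $\mathrm{H}_i(G;\widehat{RG}^\phi)$ in the same range. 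This gives the implication $\phi \in \Sigma^n(G;R) \Rightarrow \mathrm{H}_i(G;\widehat{RG}^\phi) = 0$ for $i \leqslant n$.

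For the converse, I would argue by induction on $n$, the base case $n = 0$ being a direct check that $\mathrm{H}_0(G;\widehat{RG}^\phi) = \widehat{RG}^\phi \otimes_{RG} R = 0$ never actually happens (so one must be slightly careful about indexing — in fact $\mathrm{H}_0$ vanishes iff the augmentation $\widehat{RG}^\phi \to$ something is surjective with the right kernel), matching the fact that $\Sigma^0$ is all of $\mathrm{H}^1(G;\mathbb R)\smallsetminus\{0\}$ by convention. Assuming the statement for $n-1$, vanishing of $\mathrm{H}_i$ for $i \leqslant n$ gives, via the flatness and the long exact sequences coming from truncating a resolution, that the $n$-th syzygy module $Z_n$ (the kernel in degree $n-1$ of a resolution that is already finitely generated in degrees $\leqslant n-1$, which exists by induction) becomes finitely generated \emph{after} tensoring up to $\widehat{RG}^\phi$; a Mayer--Vietoris / approximation argument over the monoid ring then descends this to finite generation of $Z_n$ over $RG_\phi$ itself, completing the inductive step and showing $G_\phi$ is $\mathtt{FP}_n(R)$, i.e. $\phi \in \Sigma^n(G;R)$.

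The main obstacle is the converse direction, specifically the descent step: deducing finite generation of a module over $RG_\phi$ from finite generation of its extension over the much larger Novikov ring $\widehat{RG}^\phi$. This is where Sikorav's original argument (in the finitely presented case) and Fisher's extension to all $n$ require genuine work — one needs a compactness or \emph{Novikov lemma}-style statement asserting that a finite generating set over $\widehat{RG}^\phi$ can be replaced by finitely many elements of $RG_\phi$ by truncating the power series and controlling the error term, which relies on the monoid $G_\phi$ being finitely generated (the $\mathtt{FP}_1$ input) to make the truncation estimates uniform. I would structure the write-up so that this descent lemma is stated cleanly and proved once, then invoked mechanically in the induction; everything else is standard homological algebra with the two flatness facts about $\widehat{RG}^\phi$ doing the heavy lifting.
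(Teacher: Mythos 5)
The paper does not prove this result; it is quoted as a black box and attributed to Sikorav and (for general $n$ and general $R$) to Fisher, so there is no paper proof to compare against, and I can only assess correctness directly.

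Your proposal has errors in both of the preliminary reductions on which you hang the rest. First, the claim that the $\mathtt{FP}_n(R)$ condition for the monoid $G_\phi$ is ``exactly'' the vanishing of $\mathrm{H}_i(G_\phi; RG_\phi)$ for $i \leqslant n$ is false: taking $i = 0$, one has $\mathrm{H}_0(G_\phi; RG_\phi) = RG_\phi \otimes_{RG_\phi} R = R$, which is never zero, while $G_\phi$ is always of type $\mathtt{FP}_0(R)$. Being of type $\mathtt{FP}_n$ is about the existence of a partial resolution by finitely generated projectives; it is a finiteness condition, not a vanishing condition, and no dimension shift along the augmentation ideal converts one into the other. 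Second, your base case in the converse direction is backwards: you assert that $\mathrm{H}_0(G; \widehat{RG}^\phi) = \widehat{RG}^\phi \otimes_{RG} R = 0$ ``never actually happens'', but it always happens for $\phi \neq 0$. Picking $t$ with $\phi(t) = 1$, the element $t - 1$ has lowest $\phi$-degree part $-1$, a unit of $R$, so $t-1$ is invertible in $\widehat{RG}^\phi$; hence the augmentation ideal generates the whole Novikov ring and $\mathrm{H}_0 = 0$. This is precisely why the convention $\Sigma^0(G;R) = \mathrm{H}^1(G;\mathbb{R}) \smallsetminus \{0\}$ is consistent with the theorem.

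Third, the flatness of $\widehat{RG}^\phi$ over the monoid ring $RG_\phi$, which you call a lemma whose general case ``more care shows'', is a substantive claim that is not true at this level of generality. Already for $G = \mathbb{Z}$ and $\phi = \mathrm{id}$ it reduces to asking whether $R[[t]]$ is flat over $R[t]$, which fails for general base rings $R$; the statement here allows an arbitrary unital ring, so one cannot invoke Noetherian or coherence hypotheses. The observation that an element whose $\phi$-minimal part is a unit becomes invertible in $\widehat{RG}^\phi$ is correct, and is indeed a useful ingredient, but it falls far short of flatness. The published arguments (Sikorav, Bieri--Renz, Schweitzer, Fisher) are careful to avoid such a flatness claim: they work with finitely generated free chain complexes throughout and replace flatness by a Nakayama-type completeness argument for the one-sided Novikov ring, or by a direct chain-contraction construction. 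Your last paragraph correctly identifies the truncation-and-error-control step as the genuine content of the converse direction, but the ``easy'' scaffolding you set up to frame the induction needs to be replaced by correct statements before that step can be carried out.
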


Combining \cref{BNS} and \cref{Sikorav} allows us to check homological finiteness properties of $\ker \phi$ by studying the homology of $G$ with coefficients in the Novikov rings $\widehat{R G}^\phi$ and $\widehat{R G}^{-\phi}$. We will refer to such homologies as \emph{Novikov homology}.

\section{Virtual fibring}
\label{sec virt fibring}
\subsection{Obstructions}

So far, we were focused on deciding whether a given manifold or  group fibres. It is however very useful, both for manifolds and for groups, to exhibit \emph{virtual fibring}, that is, fibring of a finite-index cover or subgroup. The lesson we are learning from $3$-manifolds is that this property is much more common than being fibred on the nose. For example, celebrated results of Agol \cite{Agol2013} (in the closed case) and Wise \cite{Wise2012} (in the cusped case) show that all finite-volume hyperbolic $3$-manifolds are virtually fibred (this confirmed a famous conjecture of Thurston that we alluded to in the abstract). But it is not hard to find such manifolds that are not fibred -- for example, take the complement of the $5_2$ knot (in Rolfsen notation).

There are many more examples of virtually fibred $3$-manifolds. Among orientable prime ones with toroidal boundary, it is only some of the closed graph manifolds that do not carry a metric of non-positive curvature that are not virtually fibred. Being prime is necessary for virtual fibring among orientable $3$-manifolds. 

Studying virtual fibring presents us with a new challenge: even if we could decide for a given manifold or group whether it fibres (and `even if' is needed here, since we typically cannot!), to rule out virtual fibring we need to sift through all finite-index covers or subgroups. Hence it is natural to look for obstructions to virtual fibring.

The first (but surprisingly powerful) invariant is the Euler characteristic $\chi$: it is not hard to see that a compact manifold that fibres must have Euler characteristic equal to zero, and Euler characteristic is multiplicative in the index of a covering. Hence a compact manifold that virtually fibres must have vanishing Euler characteristic. Combining this with the Chern--Gauss--Bonnet theorem we can immediately conclude the following.

\begin{prop}
	Even-dimensional hyperbolic manifolds of finite volume do not virtually fibre.
\end{prop}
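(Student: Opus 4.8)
The plan is to combine two facts already isolated in the discussion above: a compact manifold that fibres over $\mathbb{S}^1$ has vanishing Euler characteristic, and the Euler characteristic is multiplicative under finite covers. Indeed, if a manifold $N$ fibres with fibre $F$ then $N$ is a mapping torus, so multiplicativity of the Euler characteristic in fibre bundles gives $\chi(N) = \chi(F)\chi(\mathbb{S}^1) = 0$; and if $N' \to N$ is a $d$-sheeted covering then $\chi(N') = d\,\chi(N)$, as one sees by lifting a finite CW structure. Hence any compact manifold that virtually fibres has vanishing Euler characteristic. For a finite-volume cusped manifold $M$ one applies the same reasoning to the compact manifold $\bar M$ of which $M$ is the interior: here ``$M$ fibres over $\mathbb{S}^1$'' means $\bar M$ does, forcing $\chi(\bar M) = 0$, and $\chi(M) = \chi(\bar M)$ since $\bar M$ is homotopy equivalent to its interior. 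So it suffices to prove that an even-dimensional finite-volume hyperbolic manifold $M$ has $\chi(M) \neq 0$; since any finite cover of such an $M$ is again an even-dimensional finite-volume hyperbolic manifold, this non-vanishing is stable under passing to covers, which is all we need.

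First I would treat the closed case, where the assertion is essentially immediate. As $M$ carries a metric of constant sectional curvature $-1$, the Chern--Gauss--Bonnet integrand --- the Pfaffian of the curvature $2$-form --- is a fixed nonzero scalar multiple $\kappa_n$ of the Riemannian volume form, where $\kappa_n$ depends only on $n = \dim M$ and its sign is $(-1)^{n/2}$. Integrating over $M$, the Chern--Gauss--Bonnet theorem gives $\chi(M) = \kappa_n \, \mathrm{vol}(M)$, which is nonzero because $\mathrm{vol}(M) \in (0, \infty)$; in particular the sign of $\chi(M)$ is $(-1)^{n/2}$.

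For the cusped case the same conclusion holds, because the Chern--Gauss--Bonnet formula persists for finite-volume hyperbolic manifolds: the curvature integrand is still the constant multiple $\kappa_n$ of the volume form, it is integrable since $\mathrm{vol}(M) < \infty$, and the boundary correction terms that appear along a compact exhaustion of $M$ tend to $0$ as the exhaustion is pushed out into the cusps (equivalently, one may invoke the Gauss--Bonnet theorem valid for finite-volume locally symmetric spaces). Thus again $\chi(M) = \kappa_n \, \mathrm{vol}(M) \neq 0$, and the proposition follows. Note that the cusp cross-sections are closed flat manifolds of odd dimension $n-1$, which is precisely why they are invisible both to $\chi$ and to the curvature integrand.

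The step I expect to be the main obstacle is this extension of Chern--Gauss--Bonnet to the non-compact finite-volume setting: one must verify honestly that the cusps contribute nothing to the boundary term, where all of the analytic content sits. Once that is granted the argument is purely formal. It is worth stressing that evenness of $\dim M$ is used twice --- the Pfaffian exists only in even dimensions, and a closed odd-dimensional manifold has $\chi = 0$ automatically --- which is consistent with the fact that odd-dimensional hyperbolic manifolds can, and frequently do, virtually fibre.
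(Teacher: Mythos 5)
Your proof is correct and follows essentially the same route the paper takes (fibring forces $\chi = 0$, multiplicativity of $\chi$ under finite covers, and Chern--Gauss--Bonnet gives $\chi(M) = \kappa_n\,\mathrm{vol}(M) \neq 0$ in even dimensions); you simply spell out the cusped case, which the paper leaves implicit when it says ``the Chern--Gauss--Bonnet theorem''.
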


In certain situations, the Euler characteristic becomes a perfect obstruction for virtual fibring.

\begin{thm}[Agol \cite{Agol2008}]
	\label{Agol}
	Let $M$ be an orientable  irreducible compact $3$-manifold whose fundamental group has the RFRS property. The manifold is virtually fibred if and only if $\chi(M) = 0$.
\end{thm}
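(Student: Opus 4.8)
The ``only if'' direction is elementary: if some finite cover $M'\to M$ fibres over $\mathbb S^1$ then $M'$ is a mapping torus, so $\chi(M')=0$, and since the Euler characteristic of a compact manifold is multiplicative in the degree of a finite cover, $\chi(M)=0$ as well. The content is the converse, and the plan is to follow Agol's route through Gabai's theory of taut sutured-manifold hierarchies, using the RFRS tower to ``unfold'' such a hierarchy into a finite cover that genuinely fibres.

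First I would make some reductions. Since $M$ is orientable and irreducible, each component of $\partial M$ is a closed orientable surface other than a sphere, and $\chi(\partial M)=2\chi(M)=0$ forces each to be a torus; so $\partial M$ is toroidal, possibly empty. Next I would pass to a finite cover to arrange $b_1(M)\geqslant 1$: if every finite-index subgroup of $\pi_1(M)$ had vanishing first rational homology then the defining inclusion $G_{i+1}\supseteq\ker(G_i\to H_1(G_i;\mathbb Q))$ of an RFRS chain $G=G_0\geqslant G_1\geqslant\cdots$ would force $G_{i+1}=G_i$ for every $i$, contradicting $\bigcap_i G_i=\{1\}$ unless $\pi_1(M)$ is trivial; and $\pi_1(M)=1$ means $M=\mathbb S^3$, which we exclude. (Passing to a finite cover preserves orientability, irreducibility, the RFRS property, and $\chi=0$.) Finally I would fix a primitive class $\phi\in H^1(M;\mathbb Z)\smallsetminus\{0\}$, which now exists, and represent the Poincar\'e dual of $\phi$ by a Thurston-norm-minimising surface $S$.

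The heart of the argument is the following unfolding procedure. Cutting $M$ along $S$ produces a sutured manifold $(M_1,\gamma_1)$ which is taut -- here the toroidal boundary and the norm-minimality of $S$ are what guarantee tautness -- and by Gabai's theorem $(M_1,\gamma_1)$ admits a taut hierarchy $(M_1,\gamma_1)\rightsquigarrow(M_2,\gamma_2)\rightsquigarrow\cdots\rightsquigarrow(M_k,\gamma_k)$ with well-groomed decomposing surfaces, terminating in a disjoint union of product sutured manifolds; recall that a primitive class is induced by a fibration precisely when it is Poincar\'e-dual to a norm-minimising surface whose complement is already such a product (equivalently, by \cref{Stallings}, precisely when its kernel is finitely generated). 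So the goal is to realise each successive decomposition of this hierarchy as the operation of cutting a \emph{deeper finite cover} of $M$ along a single embedded surface. Inductively, at the $i$-th stage I would work in the finite cover $M^{(i)}$ attached to a suitable term of the RFRS chain and exploit the inclusion $G_{i+1}\supseteq\ker(G_i\to H_1(G_i;\mathbb Q))$ to kill, in an intermediate abelian cover, the relative homology class carried by the $(i+1)$-st decomposing surface; this forces that surface to lift to an embedded separating surface whose complementary pieces are exactly the lifts of $(M_{i+1},\gamma_{i+1})$, and one further quotient cover merges this cut with all the preceding ones. After the $k$-th stage I obtain a finite cover $M'\to M$ and an embedded surface $\Sigma\subset M'$ such that $M'\smallsetminus\Sigma$ is a product sutured manifold $F\times I$; hence $M'$ fibres over $\mathbb S^1$ with fibre $F$ (one may connect the fibre by pulling back along a self-covering of $\mathbb S^1$ if $\Sigma$ is disconnected), and therefore $M$ is virtually fibred.

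The hard part is precisely this inductive unfolding. One must carry the full bookkeeping of the sutures, of the tautness of every intermediate sutured manifold, and of the homology classes of the successive decomposing surfaces, while verifying that the RFRS chain can be interleaved with Gabai's hierarchy so that \emph{every} stage is realised by an honest finite cover of $M$, not merely of the intermediate manifold $M_i$. This is the combinatorial core of Agol's proof and the essential place the RFRS hypothesis is used; the hypothesis $\chi(M)=0$ feeds in, via the toroidal boundary, precisely to keep all the sutured manifolds taut throughout and to make the terminal product pieces assemble into a surface bundle.
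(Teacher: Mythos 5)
Your outline faithfully reproduces Agol's original topological argument via Gabai's taut sutured-manifold hierarchies, but the survey develops and prefers a completely different, algebraic route: it recovers \cref{Agol} (in fact the sharper \cref{LottLueck}) by combining \cref{Fisher} over $\mathbb Q$, in degree $n=1$, with \cref{Stallings}. In that argument one starts from the vanishing of $\beta_1^{(2)}(G)$ (which for an irreducible aspherical $3$-manifold with $\chi(M)=0$ follows from the Lott--L\"uck computation), expresses it as a partial chain contraction over the Linnell skew-field $\mathcal D_{\mathbb Q G}$, and then uses the RFRS tower to realise that contraction over a Novikov ring $\widehat{\mathbb Q G_1}^{\pm\phi}$ for some finite-index $G_1 \leqslant G$ and some $\phi \colon G_1 \to \mathbb Z$; Sikorav's criterion (\cref{Sikorav}) together with \cref{BNS} then gives finite generation of $\ker\phi$, and Stallings finishes. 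The trade-off is the expected one: your hierarchy proof produces the fibre surface and its Gabai decomposition explicitly, but is tied irrevocably to dimension three, whereas the Novikov/$L^2$ proof is the degree-$1$ specialisation of a machine that keeps running in higher degrees, yielding the full \cref{Fisher} and, once Poincar\'e duality is fed in, \cref{PD single K thm} and \cref{PD thm}. The survey itself remarks that the two proofs are ``very different'' in technique yet curiously parallel in outline; in both, the RFRS tower is used to inductively simplify an obstruction (a sutured hierarchy on the one hand, the support of a chain contraction on the other) in deeper and deeper finite covers until it vanishes. One small caution in your reduction step: $\pi_1(M)=1$ forces $M=\mathbb S^3$, which in fact satisfies every hypothesis as stated ($\chi=0$, orientable, irreducible, and the trivial group is vacuously RFRS) yet does not virtually fibre; this degenerate case is implicitly excluded from the theorem rather than ruled out by its hypotheses, so you should record the exclusion instead of claiming to derive it.
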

(We will come back to the RFRS property later.)

There is a family of more sophisticated invariants that behave very much like the Euler characteristic, but are even more useful in obstructing virtual fibring -- $L^2$-Betti numbers.

\subsection{$L$\texorpdfstring{$^2$}{\texttwosuperior}-homology}

We will start by briefly discussing the classical theory. Let $X$ be a CW-complex on which a group $G$ acts freely, cocompactly, and cellularly (this last property means that if a cell is fixed setwise, it is fixed pointwise).
Let $C_\bullet$ denote the cellular chain complex of $X$ over $\mathbb C$ (we can also work over $\mathbb R$ if we prefer). Every $C_i$ is a free finitely generated $\mathbb C G$-module.
We may also define a chain complex $C^{(2)}_\bullet$, where the chains in $C^{(2)}_i$ are not necessarily finitely supported, but are $L^2$-summable. The differentials are induced by taking the boundary in the usual way. Another way of building this chain complex is
to tensor $C_\bullet$ with the Hilbert space $\ell^2 (G)$ over $\mathbb{C} G$ (the action on $\ell^2 (G)$ is on the \emph{right}).

 We are now looking at the chain complex 
\[\dots \to  C_{i+1}^{(2)} \to C_{i}^{(2)} \to \cdots .\]
Each term is a Hilbert space. The differentials are linear (acting on the right), their kernels are closed, but their images might not be. We remedy this by looking at the \emph{reduced} (or \emph{Hausdorff}) homology of the chain complex: we divide the kernel of the differential $\partial_i$ by the closure of the image of $\partial_{i+1}$. The homology groups $\mathrm{H}_n^{(2)}(X)$ are now Hilbert spaces themselves, and are known as the $L^2$-homology groups of $X$.

Since orthogonal complements give us canonical sections to quotient maps between Hilbert spaces, we may identify $L^2$-homology groups with closed subspaces of the Hilbert spaces $C_i^{(2)}$. These latter spaces are isomorphic to finite direct sums of copies of $\ell^2(G)$. Since differentials were $G$-equivariant (for the left $G$-action), the $L^2$-homology groups are $G$-invariant subspaces. Such subspaces are known as \emph{Hilbert $G$-modules}.

Each such subspace admits a $G$-equivariant projection from $C_i^{(2)}$ onto itself. The projection is naturally a finite matrix over the group von Neumann algebra $\mathcal N(G)$, the algebra of $G$-equivariant continuous operators on $\ell^2(G)$, and hence admits a \emph{von Neumann trace}, a non-negative real number. The \emph{$n$th $L^2$-Betti number} of the pair $(X,G)$ is precisely the von Neumann trace of the projection corresponding to $\mathrm{H}_n^{(2)}(X)$. In general, the trace of a $G$-equivariant projection is known as the \emph{von Neumann dimension} of the Hilbert $G$-module.

In the special case where $X$ is the universal covering of a compact classifying space for $G$, we talk about the \emph{$n$th $L^2$-Betti number} of $G$ and denote it by $\beta_n^{(2)}(G)$. We will also write $\mathrm{H}_n^{(2)}(G)$ for $\mathrm{H}_n^{(2)}(X)$.

A key property of the von Neumann dimension is that is satisfies the rank-nullity formula. Also, as the trace of the identity operator is $1$, the von Neumann dimension of $\bigoplus_n \ell^2(G)$ is precisely $n$. Combining these two facts immediately shows  that for a group $G$ of type $\mathtt{F}$, the \emph{$L^2$-Euler characteristic $\chi^{(2)}(G)$}, that is, the alternating sum of the $L^2$-Betti numbers, is equal to the alternating sum of the ranks of the free modules $C_i$. This is in turn the same as the alternating sum of the number of cells of each dimension of a finite classifying space of $G$, that is, the Euler characteristic $\chi(G)$. So the $L^2$-homology, just like usual homology with trivial coefficients, gives us dimension-by-dimension information that altogether can be combined into the Euler characteristic. Both homologies are also invariant under homotopy. The advantage of $L^2$-homology (that will be crucial for us here) lies in its behaviour under passing to finite-index subgroups.

If $H$ is a finite-index subgroup of our $G$, then the resolution $C_\bullet$ becomes a resolution of the trivial $\mathbb{Z} H$-module $\mathbb{Z}$ simply by restriction: every $\mathbb{Z} G$-module $C_i$ is now treated as a $\mathbb{Z} H$-module. As $|G : H| < \infty$, the Hilbert spaces $\ell^2(G) \otimes_{\mathbb{Z} G} C_i$ and $\ell^2(H) \otimes_{\mathbb{Z} H} C_i$ are isomorphic, in fact in an $H$-equivariant way (the group $H$ acts on the left here). The bottom line is that $\mathrm{H}_n^{(2)}(G)$ and $\mathrm{H}_n^{(2)}(H)$ are isomorphic as Hilbert $H$-modules. This does not mean that the $L^2$-Betti numbers are the same: $\beta_n^{(2)}(G)$ is computed as the von Neumann trace of a $G$-equivariant projection, a $k\times k$ matrix over the group von Neumann algebra of $G$, where $C_n = \bigoplus_k \mathbb{Z} G$. The same projection can be understood as a $k|G:H| \times k|G:H|$ matrix over the group von Neumann algebra of $H$, and an easy calculation shows that the von Neumann trace of the projection over $H$ is equal to that over $G$ multiplied by the index $|G:H|$. To conclude, we have $\beta_n^{(2)}(H) = |G:H| \beta_n^{(2)}(G)$. This is not something that we can expect from the usual Betti numbers.

Here is the upshot: vanishing of $\beta_n^{(2)}$ is a property stable under passing to finite-index subgroups. It turns out that it is also an obstruction to fibring, and hence to virtual fibring. 

\begin{thm}[L\"uck Mapping Torus Theorem \cite{Lueck1994a}]
	\label{Lueck}
	If $Y$ is a compact connected  CW-complex and $X$ is a mapping torus of a cellular homeomorphism $m \colon Y \to Y$, then the universal cover of $X$ is $L^2$-acyclic.
\end{thm}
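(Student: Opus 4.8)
The plan is to reduce the statement to a computation in the group von Neumann algebra, using the multiplicative structure of a mapping torus. Write $G = \pi_1(Y)$ and let $\Gamma = \pi_1(X) = G \rtimes_\alpha \mathbb{Z}$, where $\alpha$ is the automorphism of $G$ induced by the monodromy $m$ (since $X$ is a mapping torus, $\pi_1(X)$ is an ascending HNN extension, and because $m$ is a homeomorphism it is in fact a semidirect product with $\mathbb{Z}$). The universal cover $\widetilde{X}$ admits a $\Gamma$-CW structure whose cellular chain complex $C_\bullet(\widetilde X)$ fits, by the standard mapping-torus presentation, into a short exact sequence of $\mathbb{Z}\Gamma$-chain complexes
\[
0 \longrightarrow \mathbb{Z}\Gamma \otimes_{\mathbb{Z}G} C_\bullet(\widetilde Y) \xrightarrow{\ t_\ast - (\text{incl})_\ast\ } \mathbb{Z}\Gamma \otimes_{\mathbb{Z}G} C_\bullet(\widetilde Y) \longrightarrow C_\bullet(\widetilde X) \longrightarrow 0,
\]
where $\widetilde Y$ is the universal cover of $Y$, the map is ``multiplication by $t$ minus the chain map induced by $m$'' (a form of the Wang sequence), and $t$ is the generator of the $\mathbb{Z}$ factor.

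Tensoring with $\ell^2(\Gamma)$ over $\mathbb{Z}\Gamma$ is exact enough on the level of the associated long exact sequence in $L^2$-homology: the short exact sequence of Hilbert $\Gamma$-chain complexes induces a long exact sequence relating $\mathrm{H}^{(2)}_\ast(\widetilde X)$ to the $L^2$-homology of the mapping cone of the ``Wang operator'' $t_\ast - m_\ast$ acting on $\ell^2(\Gamma) \otimes_{\mathbb{Z}G} C_\bullet(\widetilde Y)$. Since $\Gamma$ is induced up from $G$, one identifies $\ell^2(\Gamma) \otimes_{\mathbb{Z}G} C_i(\widetilde Y)$ with $\ell^2(\Gamma)^{k_i}$ where $k_i$ is the number of $i$-cells of $Y$, and the relevant $L^2$-homology is that of the chain complex obtained by applying the functor $\mathcal{N}(\Gamma) \otimes_{\mathcal{N}(G)} -$ (induction of Hilbert modules) to $\mathrm{H}^{(2)}_\ast(\widetilde Y)$ together with the self-map induced by $m$. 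The key point is then purely algebraic: on each $\mathrm{H}^{(2)}_i(\widetilde Y)$, viewed inside $\ell^2(\Gamma)^{k_i}$, the operator $t \cdot (\,\cdot\,) - m_\ast$ is \emph{injective with dense image}, equivalently its von Neumann dimension of kernel and of cokernel both vanish. Granting that, the long exact sequence forces $\mathrm{H}^{(2)}_n(\widetilde X) = 0$ for all $n$.

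The heart of the argument — and the step I expect to be the main obstacle — is establishing that the Wang operator $D = t\cdot(\,\cdot\,) - m_\ast$ has trivial $L^2$-kernel and $L^2$-cokernel on each $\mathrm{H}^{(2)}_i(\widetilde Y)$. The reason this is true is that $t$ acts on $\ell^2(\Gamma)$, and conjugation by $t$ acts on $\mathcal{N}(G) \subseteq \mathcal{N}(\Gamma)$ as the automorphism $\alpha_\ast$; an element in the kernel of $D$ would be an ``$\alpha_\ast$-eigenvector'' for $t$ with eigenvalue forced by $m_\ast$, and one shows no such nonzero $L^2$-element exists by a Fourier-expansion argument along the $\mathbb{Z}$-direction: writing a putative kernel element as $\sum_{j \in \mathbb{Z}} v_j t^j$ with $v_j$ in the appropriate space over $\mathcal{N}(G)$, the relation $Dv = 0$ becomes a recursion $v_{j+1} = m_\ast(\alpha_\ast^{j}(\,\cdot\,))$-type shift that propagates any nonzero component to infinitely many $j$, contradicting $L^2$-summability; the same argument applied to the adjoint $D^\ast$ handles the cokernel. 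Equivalently, and perhaps more cleanly, one invokes the fact that $t - a$ is a non-zero-divisor in $\mathcal{N}(\Gamma)$ for any unitary-twisted $a$ coming from $\mathcal{N}(G)$, together with the dimension-flatness of $\mathcal{N}(\Gamma)$ over the crossed product, to conclude $\dim_{\mathcal{N}(\Gamma)} \ker D = \dim_{\mathcal{N}(\Gamma)} \operatorname{coker} D = 0$. Once this injectivity/density statement is in hand, the remaining bookkeeping — checking exactness of the induced long exact sequence, and that the isomorphism $\mathrm{H}^{(2)}_n(\widetilde X) \cong \operatorname{coker}(D \text{ on } \mathrm{H}^{(2)}_n) \oplus \ker(D \text{ on } \mathrm{H}^{(2)}_{n-1})$ really does give zero — is routine homological algebra over $\mathcal{N}(\Gamma)$, using that von Neumann dimension is faithful and additive.
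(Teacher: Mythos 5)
The paper does not reprove this theorem---it is quoted from L\"uck's work---so I am comparing your proposal to L\"uck's actual argument. L\"uck's proof is completely different from, and far more elementary than, what you propose. It does not go through a Wang sequence at all. The key observation is that the $n$-fold cyclic cover $X_n$ of $X$ (pulled back from the canonical map $X \to \mathbb{S}^1$) is itself the mapping torus of $m^n \colon Y \to Y$, and therefore has a CW-structure with the \emph{same} number of cells in each dimension as $X$, namely $k_i + k_{i-1}$ where $k_i$ is the number of $i$-cells of $Y$. Since $\widetilde{X_n} = \widetilde X$, multiplicativity of $L^2$-Betti numbers under passage to finite-index subgroups gives $\beta_i^{(2)}(\widetilde X; \pi_1(X_n)) = n\,\beta_i^{(2)}(\widetilde X; \pi_1(X))$, while the number of equivariant cells gives the uniform bound $\beta_i^{(2)}(\widetilde X; \pi_1(X_n)) \leqslant k_i + k_{i-1}$. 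Letting $n \to \infty$ forces every $\beta_i^{(2)}(\widetilde X; \pi_1(X)) = 0$. No analysis of the Wang operator is needed.

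Your Wang-sequence route is a legitimate alternative strategy---it is close in spirit to how the generalised algebraic version (\cref{HennekeKielak}) is proved---but as written it has a genuine gap precisely at the step you yourself flag as ``the heart of the argument.'' You need the Wang operator $D = t\cdot(\,\cdot\,) - m_\ast$ to have trivial von Neumann kernel and cokernel on each finitely generated Hilbert $\mathcal N(\Gamma)$-module appearing in the long exact sequence. The ``Fourier expansion'' sketch does not establish this: once you pass to reduced $L^2$-homology, a class is represented by an $\ell^2$-chain only up to closure of boundaries, and the coefficient-recursion argument does not propagate cleanly through that quotient. The alternative phrasing---``$t - a$ is a non-zero-divisor in $\mathcal N(\Gamma)$ for any unitary-twisted $a$ from $\mathcal N(G)$, plus dimension-flatness of $\mathcal N(\Gamma)$ over the crossed product''---is not a theorem you can simply invoke: dimension-flatness results of this kind are known for amenable groups, not for arbitrary crossed products $\mathcal N(G) \rtimes \mathbb Z$, and the non-zero-divisor claim itself is not obvious for general $a \in \mathcal N(G)$. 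In short, the reduction to the Wang operator is fine, but the claimed weak-isomorphism property is exactly as hard as the theorem itself and would need a real proof; the cyclic-self-cover argument sidesteps the entire issue.
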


In particular, if a compact manifold virtually fibres then it has to be $L^2$-acyclic.

Recall \cref{Agol}: for irreducible $3$-manifolds with RFRS fundamental groups, the Euler characteristic is a perfect obstruction to virtual fibring. Combining this with the computations of $L^2$-homology of $3$-manifolds by Lott--L\"uck \cite{LottLueck1995}, we obtain a sharper result.

\begin{thm}
	\label{LottLueck}
	Let $M$ be a compact  $3$-manifold whose fundamental group $G$ has the RFRS property. The manifold is virtually fibred if and only if $\beta_n^{(2)}(G) = 0$ for all $n$.
\end{thm}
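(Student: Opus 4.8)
The plan is to argue the two implications separately: necessity comes directly from the results already quoted, and sufficiency is reduced to Agol's criterion (\cref{Agol}).

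For necessity, suppose $M$ virtually fibres, so that some finite-sheeted cover $\widehat M$ is a fibre bundle over $\mathbb S^1$, hence a mapping torus with compact base. By the L\"uck Mapping Torus Theorem (\cref{Lueck}) the universal cover of $\widehat M$ is $L^2$-acyclic, i.e.\ $\beta_n^{(2)}(\widehat G)=0$ for all $n$, where $\widehat G=\pi_1(\widehat M)$ has finite index in $G$. Since $L^2$-Betti numbers are multiplicative under finite covers, $\beta_n^{(2)}(\widehat G)=|G:\widehat G|\,\beta_n^{(2)}(G)$, and therefore $\beta_n^{(2)}(G)=0$ for all $n$.

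For sufficiency I would first make two harmless reductions. A RFRS group is torsion-free, since a torsion element lies in every term of a RFRS chain and hence in its trivial intersection; thus $G$ is a torsion-free $3$-manifold group, so it is of type $\mathtt F$ (whence the $\beta_n^{(2)}(G)$ are defined), and, by geometrisation, $M$ is a connected sum of aspherical prime pieces and copies of $S^2\times\mathbb S^1$. Replacing $M$ by its orientation double cover if needed — an operation preserving the RFRS property and the vanishing of all $\beta_n^{(2)}$, and under which $M$ virtually fibres if and only if its orientation cover does — I may assume $M$ orientable. Now I invoke the Lott--L\"uck computation of the $L^2$-homology of $3$-manifolds \cite{LottLueck1995}: since $\beta_0^{(2)}(G)=0$ forces $\pi_1(M)$ infinite, their formula for $\beta_1^{(2)}(\widetilde M)$ applies, and — using that $\beta_0^{(2)}$ and $\beta_1^{(2)}$ depend only on $\pi_1$, so that $\beta_1^{(2)}(\widetilde M)=\beta_1^{(2)}(G)=0$ — it shows $M$ is prime with $\chi(M)=0$. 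A prime orientable torsion-free compact $3$-manifold is either $S^2\times\mathbb S^1$, which fibres over $\mathbb S^1$ (so there is nothing to prove), or irreducible; in the irreducible case $M$ is orientable, irreducible and compact with RFRS fundamental group and $\chi(M)=0$, so \cref{Agol} gives that $M$ virtually fibres. (It is worth noting that this direction uses only the vanishing of $\beta_0^{(2)}$ and $\beta_1^{(2)}$; vanishing of the higher $L^2$-Betti numbers is then automatic for $3$-manifolds.)

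The real weight of the theorem lies not in this bookkeeping but in the two external inputs on which it rests: Agol's virtual fibring theorem for RFRS $3$-manifold groups — which builds a fibred finite cover out of a RFRS tower by way of a sutured-manifold hierarchy together with Stallings's criterion (\cref{Stallings}) — and Lott--L\"uck's explicit calculation of the $L^2$-Betti numbers of $3$-manifolds. Granting those, the proof is exactly the observation that, for RFRS $3$-manifold groups, the condition ``$\beta_n^{(2)}(G)=0$ for all $n$'' is equivalent, once non-orientability and reducibility have been removed, to Agol's hypothesis ``$\chi(M)=0$''.
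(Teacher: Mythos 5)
Your argument is correct and follows exactly the route the paper sketches for this theorem: reduce to the orientable case, use the Lott--L\"uck formulae to show that vanishing of $\beta_1^{(2)}$ forces $M$ to be prime with $\chi(M)=0$, discard $\mathbb S^2\times\mathbb S^1$, and feed the remaining irreducible case into Agol's theorem (\cref{Agol}), with necessity coming from multiplicativity of $L^2$-Betti numbers together with L\"uck's Mapping Torus Theorem (\cref{Lueck}). For completeness, the paper also records, after \cref{Fisher}, a second and independent proof of \cref{LottLueck} obtained by combining \cref{Fisher} over $\mathbb Q$ with \cref{Stallings}, which replaces the Lott--L\"uck computation by the algebraic machinery of \cref{sec virt fibring}.
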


The difference between \cref{LottLueck} and \cref{Agol} is the assumption of the manifold being irreducible (orientability does not really play a role in virtual fibring). If a closed connected orientable $3$-manifold is not irreducible, then either it is $\mathbb S^2 \times \mathbb S^1$, or its first $L^2$-Betti number is positive. This is a nice concrete manifestation of $L^2$-homology being more powerful than Euler characteristic in obstructing virtual fibring. 

The construction of $L^2$-homology can be generalised in various ways, for example by going away from type $\mathtt{F}$, or even type $\mathtt{FP}_\infty$, and allowing infinity as a possible value for $\beta_n^{(2)}(G)$. A detailed discussion of this any many other related topics can be found in L\"uck's book \cite{Lueck2002}. This sometimes allows us to define $\chi^{(2)}$ in situations where $\chi$ cannot be defined; one natural place where this is used is the study of the Thurston norm for $3$-manifolds and free-by-cyclic groups, see \cite{FriedlLueck2019a,FunkeKielak2018}.

The von Neumann dimension of a Hilbert $G$-module can be any non-negative real number. In the context of $L^2$-homology, the modules that appear are not arbitrary however -- they come from kernels and cokernels of the differentials, and these can be thought of as matrices over the group ring $\mathbb{Z} G$. It is expected that this is a serious restriction. One way of making such an expectation precise is the following.

\begin{conj}[Atiyah conjecture, torsion-free case]
	Let $G$ be a torsion-free group and let $A$ be a finite $n\times m$ matrix over $\mathbb{Z} G$. The von Neumann dimension of the kernel of the map
	\[
	\bigoplus_n \ell^2(G) \to \bigoplus_m \ell^2(G), \ x \mapsto xA  
	\]
	is an integer.
\end{conj}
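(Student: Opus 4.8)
The plan is to reduce the claim to a statement about a ring sandwiched between $\mathbb{Q}G$ and the algebra of affiliated operators, prove it there, and transport back; I should say at once that in full generality this is \emph{not} known --- the statement is exactly the torsion-free case of the Atiyah conjecture --- so what follows is the strategy that genuinely works for large classes of $G$, together with the point where it breaks.

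First I would reformulate using additivity of $\dim_{\mathcal N(G)}$ on short exact sequences: the von Neumann dimension of $\ker(\cdot A)$ equals $n$ minus the dimension of the closure of the image, and since $\mathcal N(G)$ is a finite von Neumann algebra it suffices to show that $\dim_{\mathcal N(G)} M \in \mathbb Z$ for every $\mathcal N(G)$-module $M = \mathrm{coker}\big(\mathbb Z G^{\,m} \xrightarrow{A} \mathbb Z G^{\,n}\big) \otimes_{\mathbb Z G} \mathcal N(G)$, i.e.\ for every finitely presented module induced from $\mathbb Z G$. Passing to the algebra $\mathcal U(G)$ of affiliated operators --- the Ore localisation of $\mathcal N(G)$ at its non-zero-divisors, which is von Neumann regular --- such a module becomes finitely generated projective, so the content is: the von Neumann rank of a finitely generated projective $\mathcal U(G)$-module defined over $\mathbb Z G$ is an integer. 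Equivalently, the image of the composite $K_0(\mathbb Z G) \to K_0(\mathcal U(G)) \xrightarrow{\ \dim\ } \mathbb R$ lies in $\mathbb Z$.

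The heart of the argument (Linnell's strategy) is to squeeze a \emph{skew field} into the picture. Let $\mathcal D_{\mathbb Q G}$ be the division closure of $\mathbb Q G$ inside $\mathcal U(G)$, the smallest subring closed under inverting its units. If one proves $\mathcal D_{\mathbb Q G}$ is a division ring, then every $\mathcal D_{\mathbb Q G}$-module is free of a well-defined rank, $\dim_{\mathcal N(G)}$ restricted to such modules is that rank, and the integrality of the $K_0$-image follows formally. To show $\mathcal D_{\mathbb Q G}$ is a skew field I would induct along a presentation of $G$ built from free groups by extensions and directed unions (Linnell's class $\mathcal C$): for $G$ free --- more generally, locally indicable --- one identifies $\mathcal D_{\mathbb Q G}$ with the Hughes-free Malcev--Neumann division ring of $\mathbb Q G$; for an extension $1 \to N \to G \to \mathbb Z \to 1$ one uses a crossed-product/Ore localisation over the skew field already obtained for $N$; for a finite quotient one invokes Moody's induction theorem, with torsion-freeness forcing the relevant $K_0$ contributions to be integral (a least-common-multiple-of-orders bound, which here degenerates to $1$); directed unions pass to the colimit. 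This route is known to work for virtually-(free-by-abelian) groups, and, after Jaikin-Zapirain and collaborators, for locally indicable groups and many RFRS groups via the Hughes-free embedding.

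An alternative that yields integrality directly, but only under a residual hypothesis, is L\"uck approximation: if $G$ has a chain $G = G_0 \geqslant G_1 \geqslant \cdots$ of finite-index normal subgroups with trivial intersection, then $\dim_{\mathcal N(G)}\ker(\cdot A)$ is the limit of $\tfrac{1}{[G:G_i]}\dim_{\mathbb C}\ker(\bar A_i)$, with $\bar A_i$ the reduction of $A$ over $\mathbb C[G/G_i]$; each term is rational, but only the limit is a priori controlled, and to force it to be an integer one needs a \emph{spectral gap} --- a lower bound, uniform in $i$, on the non-zero eigenvalues of $\bar A_i \bar A_i^{\ast}$ --- which is essentially as hard as the conjecture. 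The genuine obstacle, and why the statement is still open, is precisely the torsion-free groups that are neither residually finite with usable gaps nor assembled from free and amenable pieces: for such $G$ no Hughes-free division ring is known (indeed $\mathbb Z G$ is not even known to be a domain without assuming Kaplansky's zero-divisor conjecture), so $\mathcal D_{\mathbb Q G}$ cannot currently be shown to be a skew field. One should also record that torsion-freeness is indispensable: for groups with torsion of unbounded order integrality --- and, by examples of Austin and of Grabowski, even rationality --- of these dimensions fails, so any correct argument must use the absence of torsion in an essential way, exactly as Linnell's induction does.
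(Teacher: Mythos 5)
The statement in question is a conjecture, not a theorem; the paper gives no proof of it and explicitly records that it is open (known for residually $\{$torsion-free elementary amenable$\}$ groups, stable under passing to subgroups and under free products of countable factors). You correctly identify this, and your sketch --- reduction to integrality of $\dim_{\mathcal N(G)}$ on finitely presented modules via $\mathcal U(G)$, Linnell's skew-field strategy through $\mathcal D_{\mathbb Q G}$ (equivalent to the conjecture by Linnell's theorem, as the paper notes), L\"uck approximation as an alternative contingent on a spectral gap, and the indispensability of torsion-freeness given the Austin and Grabowski examples --- accurately reflects the state of the art and matches the paper's own brief discussion.
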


The conjecture is open; it is known for extensions of free groups by  elementary amenable groups (Linnell \cite{Linnell1993}), more generally for residually \{torsion-free elementary amenable\} groups (Schick \cite{Schick2000,Schick2002}); it passes to subgroups, and it is stable under free products when the factors are countable (S\'anchez-Peralta \cite{Sanchez-Peralta2024}). We do not currently have any geometric means of establishing the conjecture. In particular, it is a very important open problem to settle the Atiyah conjecture for torsion-free hyperbolic groups.

\subsection{The Linnell skew-field}

We have already seen that $L^2$-homology  obstructs virtual fibring. To be able to generalise \cref{LottLueck}, we will need to introduce a different viewpoint, discovered by Linnell in his work on the Atiyah conjecture for free groups.

We have already met the group von Neumann algebra $\mathcal N(G)$, whose elements are continuous $G$-equivariant operators on $\ell^2(G)$. A way to think about it is to view $\mathcal N( G)$ as acting on the left; it then becomes clear that $\mathbb{Q} G$ acting on the left on $\ell^2(G)$ is a subring of $\mathcal N(G)$.
It was shown by L\"uck \cite{Lueck1998} that one can build the theory of $L^2$-homology completely algebraically, as group homology with coefficients in $\mathcal N(G)$. Instead of making the homology reduced, as we had to do when working with Hilbert spaces, one divides the homology groups by their $\mathcal N(G)$-torsion (in a suitable sense), and defines the von Neumann dimension of the resulting module to be the von Neumann trace of the corresponding projection, as before. 

 From the ring-theoretic perspective, one of the key properties of $\mathcal N(G)$ is that the set $\mathcal S$ of non-zero divisors in $\mathcal N(G)$ satisfies the \emph{(two-sided) Ore condition}: $q \mathcal N(G) \cap p \mathcal S \neq \emptyset$ and $ \mathcal N(G)q \cap  \mathcal S p \neq \emptyset$ for every $p \in \mathcal N(G), q \in \mathcal S$.  This may seem mysterious, but all it says is that a `right fraction' $pq^{-1}$ can be turned into a `left fraction' ${q'}^{-1}p'$ and vice-versa, where the denominators $q$ and $q'$ lie in $\mathcal S$, and numerators $p$ and $p'$ in $\mathcal N(G)$.  

Equipped with the Ore condition, we can build the ring of fractions $\mathcal U(G)$ of $\mathcal N(G)$ with denominators in $\mathcal S$.
The ring $\mathcal U(G)$ is known as the algebra of operators \emph{affiliated with $\mathcal N(G)$}, and admits also a functional-analytic description. Formally, the ring of fractions is a localisation (the Ore localisation, to be more specific), and since localisation is flat, we obtain
\[
\mathrm H_n(G;\mathcal U(G)) = \mathrm H_n(G;\mathcal N(G)) \otimes_{\mathcal N(G)} \mathcal U(G). 
\]
Tensoring with $\mathcal U(G)$ rids us of the $\mathcal N(G)$-torsion issue, and one can again build a theory of von Neumann dimension for $\mathcal U(G)$-modules that gives the same $L^2$-Betti numbers.

We passed from $\mathcal N(G)$ to a bigger ring, and now we are going to find a smaller one that fits our bill.

\begin{definition}
	Given an inclusion of rings $S \leqslant T$, we say that $S$ is \emph{division closed} in $T$ if and only if every element in $S$ that admits a two-sided inverse in $T$ admits such an inverse in $S$. Given rings $R \leqslant T$, the \emph{division closure} of $R$ in $T$ is the intersection of all division-closed subrings of $T$ that contain $R$.
	
	The division closure of $\mathbb{Q} G$ in $\mathcal U(G)$, denoted $\mathcal D_{\mathbb{Q} G}$, is the \emph{Linnell ring} of $G$.
\end{definition}

Warning: Andrei Jaikin-Zapirain talks about \emph{Linnell} skew-fields, where being Linnell is a special (desirable) property. The Linnell ring in our sense is a Linnell skew-field in Jaikin's, provided that it is a skew-field.

\begin{thm}[Linnell \cite{Linnell1993}, see also Lemma 10.39 of \cite{Lueck2002}]
	A torsion-free group $G$ satisfies the Atiyah conjecture if and only if $\mathcal D_{\mathbb{Q} G}$ is a skew-field.
\end{thm}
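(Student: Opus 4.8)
The plan is to prove the two implications separately, in both of them working through the algebraic picture of $L^2$-homology recalled above: for a finite $n\times m$ matrix $A$ over $\mathbb{Z} G\subseteq\mathcal U(G)$, the von Neumann dimension of the kernel of $x\mapsto xA$ on $\bigoplus_n\ell^2(G)$ equals $\dim_{\mathcal U(G)}$ of the kernel of the right-multiplication map $r_A\colon\mathcal U(G)^n\to\mathcal U(G)^m$ (flatness of $\mathcal U(G)$ over $\mathcal N(G)$ together with L\"uck's dimension theory). Thus the Atiyah conjecture becomes the statement that $\dim_{\mathcal U(G)}\ker(r_A)\in\mathbb{Z}$ for every such $A$. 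Throughout I would use freely the standard structural facts about $\mathcal U(G)$: it is von Neumann regular, so every finitely generated one-sided ideal is a projective direct summand, and its extended dimension function is faithful on finitely generated projective modules and obeys rank--nullity.

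\emph{The ``if'' direction} is the easy one. Assuming $\mathcal D:=\mathcal D_{\mathbb{Q} G}$ is a skew-field, I would regard a $\mathbb{Z} G$-matrix $A$ as a matrix over $\mathcal D$; linear algebra over a skew-field gives $\ker(r_A\colon\mathcal D^n\to\mathcal D^m)\cong\mathcal D^r$ with $r=n-\operatorname{rk}_{\mathcal D}(A)\in\mathbb{Z}$. Since $\mathcal U(G)$ is free, hence flat, over $\mathcal D$, tensoring the exact sequence $0\to\mathcal D^r\to\mathcal D^n\to\mathcal D^m$ up to $\mathcal U(G)$ identifies $\ker(r_A\colon\mathcal U(G)^n\to\mathcal U(G)^m)$ with $\mathcal U(G)^r$, of von Neumann dimension $r\in\mathbb{Z}$; by the translation above, the Atiyah conjecture follows.

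\emph{The ``only if'' direction} is where the work lies. First I would reduce to matrices over $\mathbb{Q} G$ (clearing a common integer denominator multiplies $A$ by a central unit of $\mathcal U(G)$ and leaves the kernel unchanged), and then filter the division closure: put $\mathcal D_0=\mathbb{Q} G$ and let $\mathcal D_{j+1}$ be the subring of $\mathcal U(G)$ generated by $\mathcal D_j$ together with the inverses of those elements of $\mathcal D_j$ that are units in $\mathcal U(G)$, so that $\mathcal D:=\mathcal D_{\mathbb{Q} G}=\bigcup_j\mathcal D_j$. The core is the inductive claim $P(j)$: every finite matrix $B$ over $\mathcal D_j$ satisfies $\dim_{\mathcal U(G)}\ker(r_B)\in\mathbb{Z}$. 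Here $P(0)$ is the Atiyah conjecture over $\mathbb{Q} G$, and for the step I would invoke Cohn's linearisation (Cramer's rule for universal localisations): a matrix over $\mathcal D_{j+1}$ involves only finitely many inverses $u_1^{-1},\dots,u_k^{-1}$ with $u_i\in\mathcal D_j$, and adjoining a fresh coordinate for each subexpression of the shape $(\,\cdot\,)u_i^{-1}$, subject to the relation that right multiplication by $u_i$ annihilates it, produces a larger matrix $B'$ over $\mathcal D_j$ whose $\mathcal U(G)$-kernel agrees with $\ker(r_B)$ up to a finitely generated free summand; hence $\dim_{\mathcal U(G)}\ker(r_B)$ differs by an integer from $\dim_{\mathcal U(G)}\ker(r_{B'})\in\mathbb{Z}$, which is $P(j+1)$.

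With $P(j)$ established, I would conclude as follows. Take $0\neq d\in\mathcal D$, choose $j$ with $d\in\mathcal D_j$, and apply $P(j)$ to the $1\times1$ matrix $(d)$. The image $\mathcal U(G)d$ of $r_d\colon x\mapsto xd$ is a nonzero finitely generated projective left ideal, hence of positive dimension, so by rank--nullity the nonnegative integer $\dim_{\mathcal U(G)}\ker(r_d)$ is strictly less than $1$ and therefore $0$, i.e.\ $\ker(r_d)=0$, so $xd=0$ implies $x=0$. Moreover $\mathcal U(G)d$ is a direct summand of $\mathcal U(G)$ of full dimension, hence equals $\mathcal U(G)$, so $ed=1$ for some $e\in\mathcal U(G)$; then $(de-1)d=d(ed)-d=d-d=0$, and $\ker(r_d)=0$ forces $de=1$. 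Thus $d$ is a two-sided unit in $\mathcal U(G)$, and as $\mathcal D$ is division closed, $d^{-1}\in\mathcal D$; so $\mathcal D=\mathcal D_{\mathbb{Q} G}$ is a skew-field. The step I expect to be the main obstacle is the inductive passage $P(j)\Rightarrow P(j+1)$: carrying out the linearisation carefully and checking that it genuinely preserves the $\mathcal U(G)$-module kernel, and hence the von Neumann dimension, up to free summands, rather than only some coarser rank invariant. Everything else -- linear algebra over a division ring, flatness, and the quoted properties of $\mathcal N(G)$ and $\mathcal U(G)$ -- should be routine.
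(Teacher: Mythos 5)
The paper cites this theorem (to Linnell and to L\"uck's Lemma~10.39) without supplying its own proof, so there is no ``paper's proof'' to match; what you have produced is a reconstruction of the standard argument, and it is essentially correct. Your ``if'' direction is the usual flat base change: linear algebra over the skew-field $\mathcal D$, then tensor up along $\mathcal D\hookrightarrow\mathcal U(G)$, then along $\mathcal N(G)\hookrightarrow\mathcal U(G)$. Your ``only if'' direction --- filtering the division closure as $\mathcal D=\bigcup_j\mathcal D_j$, proving $\dim_{\mathcal U(G)}\ker(r_B)\in\mathbb Z$ for matrices over each $\mathcal D_j$ by Cohn linearisation, and then reading off from the $1\times1$ case that every nonzero $d\in\mathcal D$ is a unit in $\mathcal U(G)$ and hence in $\mathcal D$ --- is the route taken by Linnell and, in spirit, by L\"uck, and you are right that the inductive step is where the real work sits. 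Two small remarks. First, your verbal description of the linearisation relation is off: if you introduce a fresh coordinate $y$ to stand for a subexpression $w\,u_i^{-1}$, the relation recorded in the enlarged matrix should be $y\,u_i=w$ (equivalently $y\,u_i-w=0$), not that right multiplication by $u_i$ annihilates $y$; in the simplest case $B=(a\,u^{-1}b)$ this gives $B'=\bigl(\begin{smallmatrix}u&-b\\ a&0\end{smallmatrix}\bigr)$ over $\mathcal D_j$, and one checks directly (or via stable association over $\mathcal U(G)$, since $u$ is a $\mathcal U(G)$-unit) that $\ker(r_{B'})\cong\ker(r_B)$, so dimensions match exactly. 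Second, a mildly cleaner packaging of the same idea avoids the $\omega$-filtration: pass straight to the rational closure $\mathcal R$ of $\mathbb Q G$ in $\mathcal U(G)$, use Cramer's rule to see that every matrix over $\mathcal R$ is, over $\mathcal U(G)$, stably associated to a matrix over $\mathbb Q G$, run the same $1\times1$ argument to conclude $\mathcal R$ is a skew-field, and then observe that $\mathcal D\subseteq\mathcal R$ together with division-closedness of $\mathcal D$ forces $\mathcal D=\mathcal R$. Either way the substance is the same, and your proposal is sound.
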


When $\mathcal D_{\mathbb{Q} G}$ is a skew-field, then $\mathcal U(G)$ is a flat $\mathcal D_{\mathbb{Q} G}$-module, since all modules over skew-fields are flat (by basic linear algebra), so
\[
\mathrm H_n(G;\mathcal U(G)) = \mathrm H_n(G;\mathcal D_{\mathbb{Q} G}) \otimes_{\mathcal D_{\mathbb{Q} G}} \mathcal U(G). 
\]
Let $\beta_n^{(2)}(G;\mathbb{Q}) = \dim_{\mathcal D_{\mathbb{Q} G}} \mathrm{H}_n(G;\mathcal D_{\mathbb{Q} G})$.
Since the von Neumann dimension of $\mathcal U(G)$ is $1$, we see that  $\beta_n^{(2)}(G;\mathbb{Q}) = \beta_n^{(2)}(G)$. We introduce this extra piece of notation since it remembers the ground field, and this will become important later.

The groups that we will consider will all be torsion free and will satisfy the Atiyah conjecture. We will work with $L^2$-homology exclusively in its incarnation as $\mathrm{H}_n(G;\mathcal D_{\mathbb Q G})$.

The skew-field $\mathcal D_{\mathbb Q G}$ behaves extremely well with respect to subgroups: given a pair $H \leqslant G$, with $G$ torsion-free and satisfying the Atiyah conjecture, and a set of right transversals $T$ for $H$ in $G$ with $1 \in T$, we have embeddings of left $\mathcal D_{\mathbb Q H}$-modules
\[
\mathcal D_{\mathbb Q H} \to \bigoplus_{t \in T} \mathcal D_{\mathbb Q H}t \to \mathcal D_{\mathbb Q G},
\]
where the first map is $x \mapsto x\cdot 1$, and the second one is induced by $x \cdot t \mapsto x t$. This property is known as \emph{strong Hughes-freeness} or the \emph{Linnell property}. Furthermore, if $H$ is a finite-index subgroup then the second map is onto. 

\subsection{RFRS}

We have seen how Novikov homology controls fibring for groups, and how $L^2$-homology obstructs virtual fibring. It is time to bring the two strands together. So far, the most general framework under which the theories are known to communicate with each other is that of RFRS groups.

\begin{definition}[Agol \cite{Agol2008}]
	A group $G$ is \emph{residually finite rationally solvable}, or \emph{RFRS}, if and only if there is a chain of finite-index normal subgroups $G = G_0 \geqslant G_1 \geqslant \dots$ such that
	\begin{enumerate}
		\item $\bigcap_i G_i = \{1\}$, that is, the chain is \emph{residual}, and
		\item for every $i$, the group $G_{i+1}$ contains the kernel of the natural map $G_i \to \mathrm{H}_1(G_i;\mathbb{Q})$.
	\end{enumerate}
\end{definition}

Since we are only interested in finitely generated groups, it is worth mentioning an equivalent definition in this setting.

\begin{thm}[Theorem 6.3 in \cite{OkunSchreve2025}]
	\label{OkunSchreve}
	A finitely generated group is RFRS if and only if it is residually \{virtually abelian and locally indicable\}.
\end{thm}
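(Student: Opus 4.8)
The plan is to prove the two implications separately; write $\mathcal{A}$ for the class of finitely generated groups that are virtually abelian and locally indicable. Two preliminary observations are used throughout. First, RFRS passes to finite-index subgroups $H\leqslant G$: intersecting a witnessing chain $(G_i)$ with $H$ works because, for $G_i$ of finite index in $G$, the natural map $\mathrm{H}_1(H\cap G_i;\mathbb{Q})\to\mathrm{H}_1(G_i;\mathbb{Q})$ is onto (transfer), so the homological condition $\ker(G_i\to\mathrm{H}_1(G_i;\mathbb{Q}))\subseteq G_{i+1}$ is inherited. Consequently, a group $Q\in\mathcal{A}$ has every nontrivial subgroup of positive first Betti number, is torsion-free and polycyclic, hence (quotienting repeatedly by $\mathrm{H}_1(-;\mathbb{Q})$ and using that the Hirsch length strictly drops) poly-$\mathbb{Z}$, and its \emph{rational derived series} $Q=Q^{(0)}\supsetneq Q^{(1)}\supsetneq\cdots$, where $Q^{(j+1)}=\ker(Q^{(j)}\to\mathrm{H}_1(Q^{(j)};\mathbb{Q}))$, terminates at $\{1\}$ after finitely many steps, with all terms characteristic in $Q$. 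Secondly, in any RFRS chain every torsion element of $G$ lies in all the $G_i$ (it dies in each $\mathrm{H}_1(G_i;\mathbb{Q})$), so RFRS groups are torsion-free.

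For the implication $\mathcal{A}$-residual $\Rightarrow$ RFRS, I would build the chain greedily. Enumerate $G\smallsetminus\{1\}=\{g_1,g_2,\dots\}$; at each stage the current group $G_i\trianglelefteq G$ is normal of finite index, and for the element $g_k$ we are currently handling (still lying in $G_i$) we fix a detecting surjection $\varphi\colon G\twoheadrightarrow Q$ with $Q\in\mathcal{A}$ and put $S=\varphi(G_i)$, a \emph{normal} subgroup of $Q$ (so $S\in\mathcal{A}$). Choosing a suitable modulus $m$ and setting
\[
G_{i+1}=\bigl\{x\in G_i : \varphi(x)\in\ker\bigl(S\to(\mathrm{H}_1(S;\mathbb{Z})/\mathrm{tors})\otimes\mathbb{Z}/m\bigr)\bigr\},
\]
one checks that $G_{i+1}$ is normal in $G$, of finite index in $G_i$, contains $\ker(G_i\to\mathrm{H}_1(G_i;\mathbb{Q}))$ (because $\mathrm{H}_1(G_i;\mathbb{Q})\twoheadrightarrow\mathrm{H}_1(S;\mathbb{Q})$), and satisfies $\varphi(G_{i+1})=\ker\bigl(S\to(\mathrm{H}_1(S;\mathbb{Z})/\mathrm{tors})\otimes\mathbb{Z}/m\bigr)$. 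Running this inside $S$ for the finitely many steps needed to push $\varphi(g_k)$ out, then turning to $g_{k+1}$, produces a chain with $\bigcap_iG_i=\{1\}$ once we know the key lemma: a group in $\mathcal{A}$ admits a \emph{residual} finite-index rational derived series for a suitable (not arbitrary) choice of moduli. This lemma is proved by induction on the Hirsch length --- local indicability keeps $b_1\geqslant1$ at every stage, and the moduli are taken divisible by the primes dividing the index of the image of a characteristic finite-index free abelian subgroup inside the first homology, which forces the finite holonomy to be peeled off after boundedly many steps; thereafter one is reduced to $\bigcap_i m_0\cdots m_{i-1}\mathbb{Z}^n=\{0\}$.

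For the implication RFRS $\Rightarrow$ $\mathcal{A}$-residual, fix $1\neq g\in G$, and let $n\geqslant1$ be least with $g\notin G_n$; then $g\in G_{n-1}$ and, since $\ker(G_{n-1}\to\mathrm{H}_1(G_{n-1};\mathbb{Q}))\subseteq G_n$, the image of $g$ in $L:=\mathrm{H}_1(G_{n-1};\mathbb{Z})/\mathrm{tors}\cong\mathbb{Z}^{b}$ is nonzero. As $\ker(G_{n-1}\to L)$ is characteristic in $G_{n-1}\trianglelefteq G$, the quotient $\Gamma:=G/\ker(G_{n-1}\to L)$ is defined, sits in a short exact sequence $1\to L\to\Gamma\to G/G_{n-1}\to1$ with $G/G_{n-1}$ finite, so $\Gamma$ is finitely generated and virtually abelian, it detects $g$, and $g$ has infinite order in it. What remains is to \emph{upgrade} $\Gamma$ to a member of $\mathcal{A}$, i.e. to make the virtually abelian quotient torsion-free --- once it is torsion-free with all subgroups of positive $b_1$ it is automatically locally indicable, and the construction can secure this. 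The naive $\Gamma$ need not be torsion-free: already for $G=F_2$ one finds, for a suitable $g$, that $\Gamma\cong\mathbb{Z}^3\rtimes\mathbb{Z}/2$ has $2$-torsion \emph{and} that no torsion-free virtually abelian quotient of this particular $\Gamma$ detects $g$. The remedy I would pursue is to replace the chain by a slower one, in which $g$ escapes only at a much deeper step $m$, so that $g$ survives in $\mathrm{H}_1(G_{m-1};\mathbb{Q})$ for a small finite-index subgroup $G_{m-1}$ carrying abundant rational homology, and to use the extra room in the finite-group module $\mathrm{H}_1(G_{m-1};\mathbb{Z})/\mathrm{tors}$ to realise a \emph{torsion-free} virtually abelian extension of $G/G_{m-1}$ as a quotient of $G$ still detecting $g$ --- recall, by Auslander--Kuranishi, that every finite group is the holonomy of a Bieberbach group, so torsion-free extensions always exist, the issue being to obtain one in this constrained way --- and then, if necessary, to pass to a further virtually abelian quotient lying in $\mathcal{A}$ while keeping the image of $g$ of infinite order.

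I expect this last upgrade to be the genuine obstacle: for a given $1\neq g\in G$, producing a torsion-free virtually abelian quotient of $G$ in which $g$ survives. The difficulty is that collapsing the torsion of the first virtually abelian quotient can also collapse $g$, which forces one to exploit the RFRS chain well beyond its first two terms $G_{n-1}\trianglelefteq G$. By contrast, in the reverse direction the one non-formal ingredient is the polycyclic lemma on residual finite-index rational derived series of groups in $\mathcal{A}$, which is a bounded computation steered by local indicability.
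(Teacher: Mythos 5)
Your plan for the direction RFRS~$\Rightarrow$~residually~$\mathcal{A}$ (where $\mathcal{A}$ denotes finitely generated, virtually abelian, locally indicable groups) stalls at exactly the point where the argument is in fact easy, and the alleged counterexample that motivates the detour is incorrect. Write $K_i := \ker\bigl(G_i \to \mathrm{H}_1(G_i;\mathbb{Q})\bigr)$ and $L_i := G_i/K_i \cong \mathrm{H}_1(G_i;\mathbb{Z})/\mathrm{tors}$; each $K_i$ is characteristic in $G_i$, hence normal in $G$, and the RFRS condition says precisely that $K_i \leqslant G_{i+1}$. You propose the quotient $\Gamma := G/K_{n-1}$ (with $n$ minimal such that $g \notin G_n$, so that $g \notin K_{n-1}$), and then abandon it, believing $\Gamma$ may have torsion. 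It cannot: $\Gamma$ admits the normal series $\Gamma = G/K_{n-1} \trianglerighteq K_0/K_{n-1} \trianglerighteq K_1/K_{n-1} \trianglerighteq \cdots \trianglerighteq K_{n-1}/K_{n-1} = 1$, whose top quotient is $G/K_0 \cong L_0$ and whose successive quotients $K_{i-1}/K_i$ embed into $G_i/K_i = L_i$ (using $K_{i-1} \leqslant G_i$), hence are free abelian. So $\Gamma$ is poly-(free abelian), hence poly-$\mathbb{Z}$, hence locally indicable; it is virtually abelian since it contains $L_{n-1}$ with index $|G:G_{n-1}|<\infty$; and $g$ survives in it. That places $\Gamma \in \mathcal{A}$ with no upgrading needed. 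Note that the key point is not mere torsion-freeness (which alone would be insufficient: Hantzsche--Wendt-type Bieberbach groups are torsion-free but not locally indicable) but the poly-$\mathbb{Z}$ structure coming from the chain of $K_i$'s, and this is what your sketch misses.

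Your $F_2$ example is also wrong as stated. With $G_1 = \ker(F_2 \to \mathbb{Z}/2)$ of rank $3$, the quotient $\Gamma = F_2/[G_1,G_1]$ sits in the extension $1 \to K_0/K_1 \to \Gamma \to \mathbb{Z}^2 \to 1$ with $K_0/K_1 = [F_2,F_2]/[G_1,G_1] \hookrightarrow \mathbb{Z}^3$ a free abelian group (in fact $\cong \mathbb{Z}$), so $\Gamma$ is a torsion-free crystallographic group, not $\mathbb{Z}^3 \rtimes \mathbb{Z}/2$ with genuine $2$-torsion. More generally, whenever one builds $\Gamma = G/K_{n-1}$ as above, there is simply no torsion to remove.

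For the converse (residually~$\mathcal{A} \Rightarrow$ RFRS) your strategy is in the right spirit, but rests on two unproved sublemmata: that every group in $\mathcal{A}$ admits a residual chain of finite-index characteristic subgroups witnessing RFRS for suitable moduli (your ``key lemma''), and that the stage-by-stage construction using different detecting quotients $Q_k$ assembles into a single residual chain for $G$. The first is genuinely nontrivial and your induction sketch would need to be carried out carefully; the second can be handled more cleanly by noting that if each $G/N_i$ is RFRS (for a residual chain of normal $N_i$), one can pull back RFRS chains for each $G/N_i$ to chains $(\widetilde{H}^{(i)}_j)_j$ in $G$ with $\bigcap_j \widetilde{H}^{(i)}_j = N_i$, and then diagonalise with a shift ($G_m := \bigcap_{i=1}^m \widetilde{H}^{(i)}_{m-i+1}$), verifying the RFRS condition term by term. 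Also, a minor slip near the top: deducing that RFRS passes to subgroups does not use a transfer-surjectivity of $\mathrm{H}_1(H\cap G_i;\mathbb{Q}) \to \mathrm{H}_1(G_i;\mathbb{Q})$ but the opposite, elementary fact that torsion maps to torsion under the inclusion-induced map on $\mathrm{H}_1(-;\mathbb{Z})$; this works for arbitrary subgroups, not just finite-index ones.
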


Recall that a group is \emph{locally indicable} if and only if its every non-trivial finitely generated subgroup maps onto $\mathbb{Z}$. It is not hard to show that finitely generated virtually abelian locally indicable groups are precisely the virtually abelian poly-$\mathbb{Z}$ groups, or the diffuse Bieberbach groups.

The class of RFRS groups is surprisingly large: one can show directly that all right-angled Artin groups (RAAGs) are RFRS (Agol in \cite{Agol2008} shows only that they are virtually RFRS, going via right-angled Coxeter groups). It is immediate from the definition that being RFRS passes to subgroups, and so all subgroups of RAAGs are RFRS. In particular, all groups that are special in the sense of Haglund--Wise have this property.

Also, finitely generated RFRS groups satisfy the Atiyah conjecture, since they are residually \{torsion-free elementary amenable\} by  \cref{OkunSchreve}. We have therefore the skew-field $\mathcal D_{\mathbb Q G}$ at our disposal. Moreover, since RFRS groups are locally indicable themselves, their group rings have only trivial units, that is, the only invertible elements are of the form $\lambda g$ where $g \in G$ and $\lambda$ is a unit of the ground ring.

It was a conjecture of Kaplansky that torsion-free groups should only have such trivial units. This has been shown to be false by Gardam \cite{Gardam2021}. We do not have a clear picture of where to expect non-trivial units to manifest themselves.

For RFRS groups, it turns out that one can relate  $\mathcal D_{\mathbb Q G}$ and the Novikov rings of finite-index subgroups. The construction is quite technical, so let us explain it only for an example. Let $x \in \mathbb Q G$ be a non-zero element. Since $\mathcal D_{\mathbb Q G}$ is a skew-field containing $\mathbb Q G$, our $x$ is invertible in $\mathcal D_{\mathbb Q G}$. Our first attempt would be to show that it is also invertible in $\widehat{\mathbb Q G}^\phi$ for some epimorphism $\phi \colon G \to \mathbb Z$. Let $G^\mathrm{fab}$ denote the free part of the abelianisation of $G$; this is precisely the image of $G$ in $\mathrm{H}_1(G;\mathbb Q)$ that appears in the definition of RFRS. We denote the natural epimorphism $G \to G^\mathrm{fab}$ by $\alpha$.

The element $x$ has a \emph{support} $\mathrm{supp} \, x$ in $G$, namely the smallest subset such that $x$ is a $\mathbb Q$-linear combination of elements from this subset. Using the fact that $\mathbb Q G$ has only trivial units and no zero-divisors (it embeds in a skew-field), it is easy to see that $x$ is invertible in  $\widehat{\mathbb Q G}^\phi$ if and only if there is a unique element in $\mathrm{supp} \, x$ that is the minimum of $\phi$ restricted to the support.

One can see it more geometrically: we can push the support to a finite subset of $G^\mathrm{fab}$ using $\alpha$, and form a polytope by taking the convex hull of this set inside of $G^\mathrm{fab} \otimes_\mathbb Z \mathbb R$. Now $x$ is invertible in  $\widehat{\mathbb Q G}^\phi$ if and only if $\phi$ restricted to the polytope attains its minimum at a unique vertex, and the summand of $x$ that corresponds to this vertex is supported on a singleton, or equivalently, is a trivial unit. (This construction is actually related to that of the Thurston polytope \cite{Thurston1986}, see \cite{Kielak2020} for details.)

What are the possibilities? If we are lucky, we find $\phi$ as above, and $x$ is invertible in  $\widehat{\mathbb Q G}^\phi$. In fact, from the geometric perspectives we see immediately that this is an open property, when we think of $\phi$ as an element of $\mathrm{H}^1(G;\mathbb{R})$.

Otherwise, we can be unlucky in two ways. It could happen that $\alpha(\mathrm{supp} \, x)$ is a singleton. Then, up to multiplication by a group element, $x$ actually lies in the group ring of $\ker \alpha$. The RFRS property gives us a finite-index subgroup $G_1$ of $G$ such that $x \in \mathbb Q G_1$ and the support of $x$ mapped to the free part of the abelianisation of $G_1$  is not a singleton. We might get lucky now, and find $\phi \colon G_1 \to \mathbb{Z}$ such that $x$ is invertible in  $\widehat{\mathbb Q G_1}^\phi$.

Finally, the generic case: there is a non-trivial decomposition $x = \Sigma_{i=1}^k x_i$ such that every $x_i \in \mathbb Q G$ is supported on a single coset of $\ker \alpha$, and the cosets are distinct for different values of $i$. Without loss of generality, $\alpha( \mathrm{supp} \, x_1)$ will be a point on which some $\phi_1 \colon G \to \mathbb Z$ attains its unique minimum. Since our decomposition of $x$ was non-trivial, $x_1$ has smaller support than $x$, and we can proceed by induction. We will find a finite-index subgroup $G_1$ of $G$, and $\phi \colon G_1 \to \mathbb{Z}$ such that $x_1$ is invertible in  $\widehat{\mathbb Q G_1}^\phi$. This is helpful, but we are after invertibility of $x$, rather than $x_1$. Here comes the most technical part of the construction: it can be shown that we may pick $\phi$ lying sufficiently close to $\phi_1|_{G_1}$ such that $x_1$ is invertible in  $\widehat{\mathbb Q G}^\psi$ for every $\psi$ obtained from $\phi$ by conjugating it by the elements of the finite group $G/G_1$. We now form a ring $\widehat{\mathbb Q G_1}^U$ where $U$ is the set of all the maps $\psi$. This ring can be thought of as an intersection of the various Novikov rings (this is a sticky point, we will come back to it). 

Now, the invariance of $U$ under the conjugation action of $G/G_1$ is needed to endow the $\widehat{\mathbb Q G_1}^U$-module $\bigoplus_{G/G_1} \widehat{\mathbb Q G_1}^U$ with a suitable ring structure. The obvious naive choice would be to consider  $\bigoplus_{G/G_1} \widehat{\mathbb Q G_1}^U$ as a sum of rings. We want  however to pick a ring structure that when restricted to $\bigoplus_{G/G_1} \mathbb Q G_1$ would naturally give us $\mathbb Q G$. Since $U$ is invariant under the action of $G/G_1$, this can be done (rather easily), and
we denote the resulting ring by $\widehat{\mathbb Q G_1}^U G/G_1$. Again, by having taken $\phi$ sufficiently close to $\phi_1|_{G_1}$ we make sure that $x$ itself is actually invertible in $\widehat{\mathbb Q G_1}^U G/G_1$.

The upshot: every non-zero element of $\mathbb Q G$ can be inverted in some $\widehat{\mathbb Q G_1}^U G/G_1$. The union of such rings forms a larger ring, and using similar reasoning one can show that every element of $\mathcal D_{\mathbb Q G}$ lies in it.
What is the use of this construction? Take a finite $k\times l$ matrix $A$ over $\mathcal D_{\mathbb Q G}$. We apply our construction to all the entries of the matrix $A$. We obtain a finite index subgroup $G_1$ and an epimorphism $\phi \colon G_1 \to \mathbb Z$. Recall that $\mathcal D_{\mathbb Q G} = \bigoplus_{G/G_1} \mathcal D_{\mathbb Q G_1}$. The matrix $A$ is naturally a $\mathcal D_{\mathbb Q G}$-linear transformation $\bigoplus_k \mathcal D_{\mathbb Q G} \to \bigoplus_l \mathcal D_{\mathbb Q G}$;  we may restrict scalars to $\mathcal D_{\mathbb Q G_1}$, and $A$ becomes a $k|G:G_1|\times l|G:G_1|$ matrix $A'$ over $\mathcal D_{\mathbb Q G_1}$. By our construction, the entries of $A'$ can be viewed as elements of $\widehat {\mathbb Q G}^\phi$.

\begin{thm}[\cite{Kielak2020a,HughesKielak2024}]
	\label{Kielak}
	Let $G$ be a RFRS group of type $\mathtt{FP}_n(\mathbb Q)$. The following are equivalent:
	\begin{enumerate}
		\item $\beta_i^{(2)}(G) = 0$ for all $i \leqslant n$;
		\item there exists a finite index subgroup $G_1 \leqslant G$ and an epimorphism $\phi \colon G_1 \to \mathbb Z$ such that
		$
		\mathrm{H}_i(G;\widehat{\mathbb Q G_1}^\phi) = 0
		$
		for all $i \leqslant n$.
	\end{enumerate}
\end{thm}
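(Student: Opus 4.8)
The plan is to exploit the technical construction outlined just before the statement, which shows how to express $\mathcal D_{\mathbb Q G}$ as a colimit of rings of the form $\widehat{\mathbb Q G_1}^U G/G_1$, and combine it with the homological characterisation of the BNS invariants via Novikov rings (\cref{Sikorav}). First I would establish the direction $(2) \Rightarrow (1)$, which is the easier one. Assuming $\mathrm H_i(G; \widehat{\mathbb Q G_1}^\phi) = 0$ for $i \leqslant n$, I would note that the Novikov ring $\widehat{\mathbb Q G_1}^\phi$ embeds into (a ring built from) $\mathcal D_{\mathbb Q G}$ in a flat manner — this is where strong Hughes-freeness and the intersection-of-Novikov-rings picture enter — so that vanishing of Novikov homology forces vanishing of $\mathrm H_i(G; \mathcal D_{\mathbb Q G})$, and hence $\beta_i^{(2)}(G) = \dim_{\mathcal D_{\mathbb Q G}} \mathrm H_i(G; \mathcal D_{\mathbb Q G}) = 0$. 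One has to be a little careful about left/right module structures and about passing between $G$ and $G_1$, but this is bookkeeping: by strong Hughes-freeness $\mathcal D_{\mathbb Q G}$ is free of rank $|G:G_1|$ over $\mathcal D_{\mathbb Q G_1}$, so homology over $\mathbb Q G$ and over $\mathbb Q G_1$ agree after the appropriate induction/restriction.

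The substantive direction is $(1) \Rightarrow (2)$. Here I would argue as follows. Since $G$ is of type $\mathtt{FP}_n(\mathbb Q)$, there is a resolution $C_\bullet$ of $\mathbb Q$ by finitely generated free $\mathbb Q G$-modules in degrees $\leqslant n$; the differentials are finitely many matrices over $\mathbb Q G \subseteq \mathcal D_{\mathbb Q G}$. The hypothesis $\beta_i^{(2)}(G) = 0$ for $i \leqslant n$ says exactly that $C_\bullet \otimes_{\mathbb Q G} \mathcal D_{\mathbb Q G}$ is exact in degrees $\leqslant n$; since $\mathcal D_{\mathbb Q G}$ is a skew-field this exactness is witnessed by finitely many matrix identities (rank conditions, explicit inverses of the relevant submatrices of the differentials, a contracting chain homotopy) taking place over $\mathcal D_{\mathbb Q G}$. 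Now apply the colimit description: all the finitely many elements of $\mathcal D_{\mathbb Q G}$ appearing in these identities lie in a single ring $\widehat{\mathbb Q G_1}^U G/G_1$ for an appropriate finite-index $G_1 \leqslant G$ and finite set $U$ of epimorphisms $G_1 \to \mathbb Z$ close to a single $\phi_1$. The point is then to descend from this "thickened" ring $\widehat{\mathbb Q G_1}^U G/G_1$ to an honest Novikov ring $\widehat{\mathbb Q G_1}^\phi$ of a finite-index subgroup for a single epimorphism $\phi$. This is exactly what the construction preceding the theorem provides: after restricting scalars from $\mathcal D_{\mathbb Q G}$ to $\mathcal D_{\mathbb Q G_1}$, the enlarged matrices have entries that can be viewed inside $\widehat{\mathbb Q G}^\phi$ (equivalently $\widehat{\mathbb Q G_1}^{\psi}$ for the $\psi \in U$), so the same matrix identities persist there, giving exactness of $C_\bullet \otimes_{\mathbb Q G} \widehat{\mathbb Q G_1}^\phi$ in degrees $\leqslant n$, i.e. $\mathrm H_i(G; \widehat{\mathbb Q G_1}^\phi) = 0$ for $i \leqslant n$.

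The main obstacle, and the place where the real work sits, is the step I glossed as "descend from $\widehat{\mathbb Q G_1}^U G/G_1$ to a single Novikov ring $\widehat{\mathbb Q G_1}^\phi$". Two issues must be handled. First, one must choose $\phi$ (near $\phi_1|_{G_1}$, and in fact one must ensure $\phi$ can be taken to be an integral class, using openness of the relevant conditions and density of $\mathrm H^1(G_1;\mathbb Q)$ in $\mathrm H^1(G_1;\mathbb R)$) so that \emph{all} finitely many elements of $\mathcal D_{\mathbb Q G}$ in play become simultaneously invertible or well-defined in $\widehat{\mathbb Q G}^\phi$ — openness of these conditions, already noted in the excerpt's geometric/polytope discussion, is what makes a common choice possible for a finite list of elements. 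Second, one must make sure the ring-theoretic identities (not just invertibility of individual elements) transport correctly: this requires that the map from the relevant subring of $\mathcal D_{\mathbb Q G}$ into $\widehat{\mathbb Q G}^\phi$ is a ring homomorphism on the subring generated by the finitely many elements involved, which is precisely the content of identifying $\widehat{\mathbb Q G_1}^U G/G_1$ with a subring of $\mathcal D_{\mathbb Q G}$ that also maps compatibly to $\widehat{\mathbb Q G}^\phi$ — the "sticky point" flagged in the text about intersections of Novikov rings. Getting these compatibilities exactly right, and in particular checking that one truly lands in a Novikov ring of a \emph{finite-index subgroup} (one may need to pass to a further finite-index subgroup to absorb the $G/G_1$-twisting), is where I would expect to spend most of the effort; the rest is formal homological algebra over a skew-field.
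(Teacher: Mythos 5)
Your $(1)\Rightarrow(2)$ argument is essentially the paper's: a finitely generated free resolution, partial chain contractions over the skew-field $\mathcal D_{\mathbb Q G}$ witnessing the vanishing, the RFRS machinery pushing the finitely many entries of those contractions into a ring $\widehat{\mathbb Q G_1}^U G/G_1$ for a finite-index $G_1$ and a finite $G/G_1$-invariant set $U$ of characters near a single $\phi_1$, then a further restriction of scalars to land the contraction over an honest Novikov ring $\widehat{\mathbb Q G_1}^\phi$. You also correctly locate the hard point (choosing $\phi$ close to $\phi_1|_{G_1}$ so that all finitely many elements become simultaneously invertible, and passing to a deeper finite-index subgroup to absorb the $G/G_1$-twisting).

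The gap is in $(2)\Rightarrow(1)$, which you describe as the easy direction and propose to settle by claiming that $\widehat{\mathbb Q G_1}^\phi$ embeds flatly into a ring built from $\mathcal D_{\mathbb Q G}$, so that vanishing of Novikov homology transports by base change to vanishing of $\mathrm H_i(G;\mathcal D_{\mathbb Q G})$. No such embedding exists, and the direction of the ring-theoretic relationship is exactly the opposite of what your argument needs. Already for $G = \mathbb Z$: here $\mathcal D_{\mathbb Q G} = \mathbb Q(t)$ while $\widehat{\mathbb Q G}^{\mathrm{id}} = \mathbb Q((t))$, and there is no ring map $\mathbb Q((t)) \to \mathbb Q(t)$ (it would have to be injective, since the source is a field, yet the source is strictly larger). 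What the RFRS machinery and strong Hughes-freeness give you is a one-way transport: finitely many elements of $\mathcal D_{\mathbb Q G}$ can be realised over a Novikov ring of a deep enough finite-index subgroup, never the reverse. There is a common overring --- both $\mathcal D_{\mathbb Q G_1}$ and $\widehat{\mathbb Q G_1}^\phi$ embed in the twisted Malcev--Neumann series skew-field $\mathcal D_{\mathbb Q \ker\phi}((t))$ --- but the implication you want then requires flatness of that series skew-field over $\widehat{\mathbb Q G_1}^\phi$, which is not at all obvious and is precisely what makes the implication nontrivial. The paper does not prove $(2)\Rightarrow(1)$ here at all: it cites it as the main result of \cite{HughesKielak2024}, valid without any RFRS assumption, and the argument there is of a different nature from a base-change. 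So the direction you called easy is in fact the one that would sink your write-up if you tried to push it through.
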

\begin{proof}[Sketch proof] That 2.\ implies 1.\ is the main result of \cite{HughesKielak2024}, and does need any assumptions on the group $G$. We will not discuss this in detail, since it is not the direction of interest in this survey.
	
	Suppose that 1.\ holds. We take a free resolution $C_\bullet$ of the trivial $\mathbb Q G$-module $\mathbb Q$ in which the modules $C_i$ are finitely generated for $i \leqslant n$. We endow the modules $C_i$ with bases, and treat the differentials $\partial_i$ as matrices over $\mathbb Q G$. The fact that  $
	\mathrm{H}_i(G;\mathcal D_{\mathbb Q G}) = 0
	$
	for all $i \leqslant n$ is equivalent to the existence of partial chain contractions, that is, we have finite matrices $A_0, \dots, A_n$ over $\mathcal D_{\mathbb Q G}$ such that
	\[
	\partial_{i} A_{i-1} + A_{i} \partial_{i+1} = \mathrm{I}  
	\]
	for all $i \leqslant n$, with $A_{-1}$ being the zero matrix.
	
	By our discussion before, we may pass to a finite-index subgroup $G_1$, keep the resolution $C_\bullet$ (formally, we are restricting the modules to $\mathbb Q G_1$), and replace the matrices $A_i$ by $A'_i$ over $\mathcal D_{\mathbb Q G_1}$. The entries of these matrices can also be thought of as lying over the Novikov ring $\widehat{ \mathbb Q G_1}^\phi$ for some $\phi$, and having partial chain contractions over the Novikov ring immediately gives 
		$
	\mathrm{H}_i(G;\widehat{\mathbb Q G_1}^\phi) = 0
	$
	for all $i \leqslant n$. 
\end{proof}

By passing to a deeper finite-index subgroup if necessary, one can simultaneously take care of Novikov homology with respect to $\phi$ and $-\phi$.

Two points from the above outline need further discussion. First, the part of the proof where we produce $\phi$ sufficiently close to $\phi_1|_{G_1}$ has been simplified and streamlined significantly by Okun--Schreve \cite{OkunSchreve2025}. Instead of focusing on morphisms to $\mathbb Z$, they look at orderings instead, and it produces a much cleaner proof. More generally, orderings are probably the way to go: already the very definition of BNS invariants focuses on the monoid $G_\phi$ that is defined in terms of an ordering. Also, some of the discussion above can be taken beyond RFRS groups using orderings, see \cite{Klinge2023} and \cite{FisherKlinge2024}.

The second point: how is one to understand $\widehat {\mathbb Q G_1}^U$? It will be instructive to look at a simple example. Take $G_1 = \langle t \rangle \cong \mathbb Z$, and let $x = 1-t$. This element is invertible in $\mathcal D_{\mathbb Q G_1}$, which in this case coincides with the field of rational functions with coefficients in $\mathbb Q$ and single variable $t$. It is also invertible in $\widehat {\mathbb Q G_1}^{\mathrm{id}}$, with inverse $\sum_{i \geqslant 0} t^i$, and over $\widehat {\mathbb Q G_1}^{-\mathrm{id}}$, with inverse $-\sum_{i < 0} t^i$. The element $x$ is not however invertible in $\widehat {\mathbb Q G_1}^{\mathrm{id}} \cap \widehat {\mathbb Q G_1}^{-\mathrm{id}} = \mathbb Q G_1$, so taking the intersection is not what we want.

The good properties of the skew-field $\mathcal D_{\mathbb Q G_1}$ that we discussed above allow us to embed it into the twisted Laurent power series ring with coefficients in $\mathcal D_{\mathbb Q \ker \phi}$ and variable $t$. The ring $\widehat {\mathbb Q G_1}^\phi$ embeds here as well, and we can take the intersection of these two rings. This way, for every $\phi$, we obtain a subring of $\mathcal D_{\mathbb Q G_1}$ consisting of elements that can be realised over the Novikov ring. We define $\widehat {\mathbb Q G_1}^U$ to be the intersection of such rings for all $\phi \in U$.

It is in this last construction that we used the fact that we are working over $\mathbb Q$ -- we need access to $\mathcal D_{\mathbb Q G_1}$ as an ambient ring in which we intersect subrings. In fact, the role of the ambient ring can be played by $\mathcal U(G)$, which is bigger, but again is a characteristic-zero object.

Andrei Jaikin-Zapirain \cite{Jaikin-Zapirain2021} constructed other ambient rings in positive characteristics. This allowed him to run a (slightly simplified) version of the above construction. He used this to \emph{define} skew-fields $\mathcal D_{\mathbb K G}$ for other ground fields $\mathbb K$, when $G$ is a RFRS group. With this new construction, the whole discussion goes through in all characteristics. Using \cref{Sikorav}, and defining
\[
\beta_i^{(2)}(G; \mathbb K) = \dim_{\mathcal D_{\mathbb K G}} \mathrm{H}_i(G;\mathcal D_{\mathbb K G})
\]
we obtain a strengthening of \cref{Kielak}.

\begin{thm}[Fisher \cite{Fisher2024}]
	\label{Fisher}
	Let $\mathbb K$ be a field.
	Let $G$ be a RFRS group of type $\mathtt{FP}_n(\mathbb K)$. The following are equivalent:
	\begin{enumerate}
		\item $\beta_i^{(2)}(G; \mathbb K) = 0$ for all $i \leqslant n$;
		\item there exists a finite index subgroup $G_1 \leqslant G$ that is $\mathtt{FP}_n(\mathbb K)$-fibred.
	\end{enumerate}
\end{thm}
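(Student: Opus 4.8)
The plan is to run the argument sketched for \cref{Kielak} with the ground field $\mathbb{Q}$ replaced by an arbitrary field $\mathbb{K}$, using in place of the affiliated algebra $\mathcal U(G)$ the ambient ring constructed by Jaikin-Zapirain \cite{Jaikin-Zapirain2021}, which exists in every characteristic and makes $\mathcal D_{\mathbb K G}$ available as a skew-field (recall that $G$ is RFRS, hence locally indicable and residually \{torsion-free elementary amenable\}, so $\mathbb K G$ has only trivial units and no zero-divisors, and $\mathcal D_{\mathbb K G}$ is defined). As in the characteristic-zero case, the implication $2 \Rightarrow 1$ is the quicker of the two.

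For $2 \Rightarrow 1$: suppose $G_1 \leqslant G$ has finite index and is $\mathtt{FP}_n(\mathbb K)$-fibred, say $G_1 = K \rtimes \mathbb Z$ with $K$ of type $\mathtt{FP}_n(\mathbb K)$. Then $G_1$ is itself of type $\mathtt{FP}_n(\mathbb K)$ and RFRS, so $\beta_i^{(2)}(G_1;\mathbb K)$ is defined for $i \leqslant n$, and the Generalised Mapping Torus Theorem (\cref{HennekeKielak}), applied with $n$ replaced by each $i \leqslant n$, gives $\beta_i^{(2)}(G_1;\mathbb K) = 0$ for all $i \leqslant n$. Strong Hughes-freeness of $\mathcal D_{\mathbb K G_1}$ inside $\mathcal D_{\mathbb K G}$ yields the index formula $\beta_i^{(2)}(G_1;\mathbb K) = |G : G_1|\,\beta_i^{(2)}(G;\mathbb K)$ exactly as over $\mathbb Q$, whence $\beta_i^{(2)}(G;\mathbb K) = 0$ for $i \leqslant n$.

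For $1 \Rightarrow 2$: take a free resolution $C_\bullet$ of the trivial $\mathbb K G$-module $\mathbb K$ with $C_i$ finitely generated for $i \leqslant n$, fix bases, and treat the differentials $\partial_i$ as matrices over $\mathbb K G$. Since $\mathcal D_{\mathbb K G}$ is a skew-field, the hypothesis $\beta_i^{(2)}(G;\mathbb K) = 0$ for $i \leqslant n$ is equivalent to $\mathrm H_i(G;\mathcal D_{\mathbb K G}) = 0$ for $i \leqslant n$, and hence to the existence of finite matrices $A_0,\dots,A_n$ over $\mathcal D_{\mathbb K G}$ with
\[
\partial_i A_{i-1} + A_i \partial_{i+1} = \mathrm I \quad (i \leqslant n), \qquad A_{-1} = 0.
\]
I would then invoke the positive-characteristic version of the Novikov-realisation machinery from \cite{Jaikin-Zapirain2021}: applied to the finitely many entries of the $A_i$, it produces a finite-index subgroup $G_1 \leqslant G$ and an epimorphism $\phi \colon G_1 \to \mathbb Z$ such that, after restricting scalars to $\mathbb K G_1$, all these entries lie in $\widehat{\mathbb K G_1}^\phi$; passing to a deeper finite-index subgroup one arranges the same for $-\phi$. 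The chain-contraction identities persist over $\widehat{\mathbb K G_1}^{\pm\phi}$, so $\mathrm H_i(G_1;\widehat{\mathbb K G_1}^{\pm\phi}) = 0$ for $i \leqslant n$. By the Sikorav--Fisher criterion (\cref{Sikorav}) this gives $\pm\phi \in \Sigma^n(G_1;\mathbb K)$, so by \cref{BNS} (homothety invariance together with its third part, applied to $S = \{\phi\}$) the kernel $\ker\phi$ is of type $\mathtt{FP}_n(\mathbb K)$; that is, $G_1 = \ker\phi \rtimes \mathbb Z$ is $\mathtt{FP}_n(\mathbb K)$-fibred.

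I expect the penultimate step to be the genuine obstacle. Over $\mathbb Q$ one has the analytic rings $\mathcal N(G) \leqslant \mathcal U(G)$ as a canonical home for $\mathcal D_{\mathbb Q G}$, and the ``intersection of Novikov rings'' construction is carried out inside $\mathcal U(G)$; in characteristic $p$ no such ambient ring is handed to us, and producing one with the flatness and strong Hughes-freeness properties needed for the induction on support sizes to run — so that every element of $\mathcal D_{\mathbb K G}$ is genuinely realised over a Novikov ring of some finite-index subgroup — is precisely the technical core supplied by Jaikin-Zapirain's work, on which this theorem rests. The rest of the argument is a formal transcription, which one could equally well streamline along the ordering-theoretic lines of Okun--Schreve.
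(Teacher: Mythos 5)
Your proposal is correct and takes essentially the same route the paper sketches: the implication $2 \Rightarrow 1$ via \cref{HennekeKielak} together with multiplicativity of $\beta_i^{(2)}(-;\mathbb K)$ under finite index (a consequence of strong Hughes-freeness), and $1 \Rightarrow 2$ by transcribing the chain-contraction argument of \cref{Kielak} to the skew-fields $\mathcal D_{\mathbb K G}$ of Jaikin-Zapirain, realising the contracting homotopies over $\widehat{\mathbb K G_1}^{\pm\phi}$, and concluding via \cref{Sikorav} and \cref{BNS}. You have also correctly identified the genuine technical content as living in Jaikin-Zapirain's construction of the ambient ring, which is precisely what the survey defers to \cite{Jaikin-Zapirain2021} and \cite{Fisher2024}.
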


The implication 2. $\Rightarrow$ 1. is true in general, without $G$ being RFRS.

\begin{thm}[Generalised Mapping Torus Theorem \cite{HennekeKielak2021}]
	\label{HennekeKielak}
	Let $\mathbb K$ be a field and
	let $K$ be a group of type $\mathtt{FP}_n(\mathbb K)$. For every semi-direct product $G = K \rtimes \mathbb Z$ we have $\beta_n^{(2)}(G;\mathbb K) = 0$, provided that the number is defined.
\end{thm}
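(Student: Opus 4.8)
The plan is to imitate the proof of L\"uck's Mapping Torus Theorem (\cref{Lueck}), but with two modifications: (i) replacing the finite CW model by a purely algebraic one that sees only the $\mathtt{FP}_n(\mathbb{K})$-hypothesis on $K$, and (ii) replacing $\mathcal N(G)$ by whatever ring $\mathcal R$ carries the dimension function computing $\beta^{(2)}_\bullet(-;\mathbb{K})$ --- namely $\mathcal U(G)$ (or $\mathcal N(G)$) when $\mathbb{K}\subseteq\mathbb{C}$, and Jaikin-Zapirain's positive-characteristic analogue in general. The existence of such an $\mathcal R$ together with a well-behaved additive dimension function on its modules is exactly what the phrase ``provided that the number is defined'' is asking for.

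Concretely, I would first fix a generator $t$ of the quotient $\mathbb{Z}$, let $\alpha$ be conjugation by $t$ on $K$, and view $\mathbb{K}G$ as the twisted Laurent polynomial ring over $\mathbb{K}K$ with variable $t$ and twisting $\alpha$. Choosing a projective resolution $P_\bullet\to\mathbb{K}$ over $\mathbb{K}K$ with $P_i$ finitely generated for $i\leqslant n$, together with an $\alpha$-semilinear chain equivalence $\widetilde\alpha\colon P_\bullet\to P_\bullet$ lifting $\alpha$, the algebraic mapping torus $Q_\bullet:=\operatorname{cone}(\mathrm{id}-\tau)$ --- where $\tau$ is the $\mathbb{K}G$-linear chain endomorphism of $\mathbb{K}G\otimes_{\mathbb{K}K}P_\bullet$ sending $g\otimes p$ to $gt^{-1}\otimes\widetilde\alpha(p)$ --- is a projective $\mathbb{K}G$-resolution of the trivial module $\mathbb{K}$ that is finitely generated in degrees $\leqslant n$; in particular $G$ is of type $\mathtt{FP}_n(\mathbb{K})$ and $\mathrm{H}_n(G;\mathcal R)$ is dimension-finite. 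Tensoring with $\mathcal R$ over $\mathbb{K}G$ identifies $\mathcal R\otimes_{\mathbb{K}G}(\mathbb{K}G\otimes_{\mathbb{K}K}P_\bullet)$ with $\mathcal R\otimes_{\mathbb{K}K}P_\bullet$ and turns $\tau$ into an $\mathcal R$-linear chain endomorphism $\overline\tau$, and the long exact (Wang) sequence of the cone places $\mathrm{H}_n(G;\mathcal R)$ in a short exact sequence
\[
0\to\operatorname{coker}\!\big(\mathrm{id}-\overline\tau_\ast\mid\mathrm{H}_n(K;\mathcal R)\big)\to\mathrm{H}_n(G;\mathcal R)\to\ker\!\big(\mathrm{id}-\overline\tau_\ast\mid\mathrm{H}_{n-1}(K;\mathcal R)\big)\to0 .
\]
By rank--nullity for the dimension function it then suffices to show that $\mathrm{id}-\overline\tau_\ast$ has kernel and cokernel of dimension $0$ on $\mathrm{H}_i(K;\mathcal R)$ for $i=n-1,n$.

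This last vanishing is the crux, and I expect it to be the main obstacle; it is the point at which the infinite order of $t$ enters. The operator $\overline\tau$ is built from right translation by $t$ on $\mathcal R$, twisted by the monodromy $\widetilde\alpha$, and after identifying $\mathrm{H}_i(K;\mathcal R)$ as a module induced up from the coefficient ring attached to $K$ it becomes a ``twisted shift''; one wants to conclude that $\mathrm{id}-\overline\tau_\ast$ is a weak isomorphism. When $\mathbb{K}\subseteq\mathbb{C}$ this can be read off from the fact that $\langle t\rangle\cong\mathbb{Z}$ acts freely by right translation on $G$, so that $1-t$ (and more generally the twisted shift) is a non-zero-divisor in $\mathcal N(G)$ and becomes invertible in $\mathcal U(G)$. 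The characteristic-free version of the same mechanism is to observe that the geometric series $\sum_{j\geqslant0}\overline\tau^{\,j}$ and $\sum_{j\geqslant0}\overline\tau^{\,-j}$ converge in the two Novikov completions $\widehat{\mathcal R}^{\,\phi}$ and $\widehat{\mathcal R}^{\,-\phi}$ of $\mathcal R$ attached to the projection $\phi\colon G\twoheadrightarrow\mathbb{Z}$, so that $\mathrm{id}-\overline\tau$ is invertible over each and hence $\mathrm{H}_\bullet(G;\widehat{\mathcal R}^{\,\pm\phi})=0$ --- which is in any case immediate from \cref{Sikorav}, since $\ker\phi=K$ is of type $\mathtt{FP}_n(\mathbb{K})$ --- and then to transfer this vanishing down to $\mathcal R$ by a Mayer--Vietoris/localisation sequence relating $\mathcal R$ to its two Novikov completions inside their common twisted Laurent series overring. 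Making this transfer rigorous is the delicate part: it requires the correct ambient ring in positive characteristic (Jaikin-Zapirain) and the care, already flagged in the survey, that the relevant Novikov object is \emph{not} literally the intersection $\widehat{\mathbb{K}G}^{\,\phi}\cap\widehat{\mathbb{K}G}^{\,-\phi}=\mathbb{K}G$. The same computation in fact yields $\beta^{(2)}_i(G;\mathbb{K})=0$ for all $i\leqslant n$, and when $\mathbb{K}=\mathbb{Q}$ the caveat in the statement is automatic, since $G=K\rtimes\mathbb{Z}$ inherits type $\mathtt{FP}_n(\mathbb{Q})$ from $K$.
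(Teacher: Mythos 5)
Your approach --- algebraic mapping torus, Wang sequence, then arguing that $\mathrm{id}-\overline\tau_\ast$ is a weak isomorphism --- is not the route of \cite{HennekeKielak2021}, and the step you flag as delicate is a genuine gap rather than a technicality to be supplied. The Wang-sequence reduction itself is fine, and you are right that $\mathrm{H}_i(G;\widehat{\mathbb K G}^{\pm\phi})=0$ for $i\leqslant n$ is immediate from \cref{Sikorav}; but transferring that Novikov vanishing down to $\mathcal D_{\mathbb K G}$ (or $\mathcal U(G)$) is precisely the hard direction of \cref{Kielak} and \cref{Fisher}, which is known only for RFRS groups and is the main result of \cite{HughesKielak2024}. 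Invoking it here would be circular, and cannot be the mechanism in any case, since \cref{HennekeKielak} carries no RFRS-type hypothesis on $G$. Your characteristic-zero shortcut --- that $1-t$ is a non-zero-divisor in $\mathcal N(G)$ --- also does not close the gap as stated: $\mathrm{id}-\overline\tau_\ast$ acts on the $\mathcal R$-modules $\mathrm{H}_i(K;\mathcal R)$, which are monodromy-twisted subquotients of $\mathcal R\otimes_{\mathbb K K}P_i$, and deducing that this twisted shift is a weak isomorphism from invertibility of $1-t$ in $\mathcal U(G)$ requires a real argument using structure well beyond ``$\mathcal R$ is a skew-field carrying a dimension function''.

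The proof in \cite{HennekeKielak2021}, and the one the survey is primed for, is the scaling argument behind \cref{Lueck}, resting only on the multiplicativity $\beta^{(2)}_n(H;\mathbb K)=|G:H|\,\beta^{(2)}_n(G;\mathbb K)$ for finite-index $H\leqslant G$ that the survey records (this is where strong Hughes-freeness of $\mathcal D_{\mathbb K G}$ is used). Your construction of $Q_\bullet=\operatorname{cone}(\mathrm{id}-\tau)$ already does all the work: it is a projective $\mathbb K G$-resolution of $\mathbb K$, finitely generated in degrees $\leqslant n$, so $G$ is of type $\mathtt{FP}_n(\mathbb K)$ and $\beta^{(2)}_n(G;\mathbb K)\leqslant\operatorname{rk}(P_n)+\operatorname{rk}(P_{n-1})=:C$. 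Repeating the construction for the index-$m$ subgroup $G_m=K\rtimes m\mathbb Z$, which is again a semidirect product of $K$ with $\mathbb Z$ (with monodromy $\alpha^m$) and so admits a resolution of the \emph{same} ranks in degrees $\leqslant n$, gives $\beta^{(2)}_n(G_m;\mathbb K)\leqslant C$ uniformly in $m$. Multiplicativity then yields $m\,\beta^{(2)}_n(G;\mathbb K)\leqslant C$ for all $m$, forcing $\beta^{(2)}_n(G;\mathbb K)=0$. This bypasses the Wang sequence, the Novikov rings, and any weak-isomorphism claim entirely, and needs only the finite-index behaviour of the dimension function --- which is exactly what the caveat ``provided that the number is defined'' is paying for.
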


Combining \cref{Fisher} over $\mathbb Q$ with \cref{Stallings} gives a new proof of \cref{LottLueck}, and in particular of \cref{Agol}. The proofs are very different: the new is algebraic whereas Agol's one is topological, but nevertheless there is a curious similarity in the broad outlines of both proofs.

Can we go beyond fields? Not with this method. We rely on the fact that every non-zero group ring element supported on a singleton is invertible, and this stops being true if the coefficients contain non-zero non-units.

What about enlarging the class of groups? It is perhaps not very clear from the above outline, but the proof uses the definition of RFRS in a strong way. On the other hand, the class of non-trivial torsion-free nilpotent groups has very good fibring properties, and the $L^2$-homology of such groups is always trivial, hence they seem to fit naturally into our setup. Unfortunately, the only nilpotent groups that are RFRS are abelian groups, and hence it is tempting to define a larger class of groups, containing both RFRS and torsion-free nilpotent groups. This was attempted by Fisher--Klinge \cite{FisherKlinge2024} -- they study the class of groups that are residually \{virtually nilpotent and poly-$\mathbb Z$\}. They did not manage to get results quite as strong as \cref{Fisher}, but they did get a result on dropping cohomological dimension.

\section{Applications}
\label{sec apps}

\subsection{Dropping dimension}  

When a closed aspherical manifold fibres, the dimension of the fibre is lower than that of the original manifold. There are group-theoretic settings exhibiting a similar phenomenon. Let $\Sigma$ be the fundamental group of a closed surface of genus at least $1$, and take any epimorphism $\phi \colon \Sigma \to \mathbb Z$. The kernel of $\phi$ is always a free group. When the genus is $1$, the kernel is a copy of $\mathbb Z$, and fits right into our setup (the group $\Sigma$ is always RFRS, and it is $L^2$-acyclic if and only if the genus is $1$). When the genus is at least $2$, the kernel is an infinitely generated free group, and so $\Sigma$ fails to be $\mathtt{FP}_1$-fibred. However, the dimension of $\Sigma$ is $2$, but that of $\ker \phi$ is $1$, in the following sense.

\begin{definition}[Cohomological dimension]
	The \emph{cohomological dimension} of a group $G$ over a ring $R$, denoted $\mathrm{cd}_R(G)$, is the infimum of lengths of resolutions of the trivial $RG$-module $R$, where a resolution $C_\bullet$ has \emph{length $n$} if and only if $C_i = 0$ for all $i > n$ and $C_n \neq 0$.  
\end{definition}

It is immediate that cohomological dimension of a subgroup is bounded above by that of the supergroup -- a resolution for the latter is a resolution for the former via restriction.

When the cohomological dimension of $G$ over $R$ is finite, it coincides with with the greatest $n$ such that $\mathrm{H}^n(G;S) \neq 0$ for some $RG$-module $S$. When $G$ is of type $\mathtt{FP}(R)$, we can say more: the cohomological dimension over $R$ coincides with the greatest $n$ such that $\mathrm{H}^n(G;RG) \neq 0$. These descriptions will be very useful for us.

Since we are going to shift focus from homology to cohomology, let us record an observation.

\begin{lem}[Corollary 3.2 of \cite{Fisheretal2025}]
	\label{Novikov cohom}
	For a group $G$, if $\mathrm{H}_i(G; \widehat{ R G}^\phi) = 0$ for all $i\leqslant n$ then $\mathrm{H}^i(G; \widehat{ R G}^{\phi}) = 0$ for all $i\leqslant n$. 
\end{lem}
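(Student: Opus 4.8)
The plan is to argue entirely at the chain level, the crucial point being that $\widehat{RG}^\phi$ is a \emph{ring}, not just an $RG$-module. Write $W = \widehat{RG}^\phi$ and fix a resolution $C_\bullet \to R$ of the trivial $RG$-module $R$ by projective $RG$-modules. Then $D_\bullet := W \otimes_{RG} C_\bullet$ is a chain complex of \emph{projective} (indeed free) $W$-modules whose homology is $\mathrm{H}_\ast(G; W)$, and by the tensor--hom adjunction there is a natural isomorphism of cochain complexes $\mathrm{Hom}_W(D_\bullet, W) \cong \mathrm{Hom}_{RG}(C_\bullet, W)$, the latter computing $\mathrm{H}^\ast(G; W)$. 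Sorting out which modules are left and which are right --- using the standard anti-automorphism $g \mapsto g^{-1}$ of $RG$ to switch coefficient conventions where needed --- is routine, and this identification needs no finiteness hypothesis on $G$, being simply adjunction.

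Next I would use the hypothesis, which says precisely that $D_\bullet$ is exact in degrees $0, 1, \dots, n$. Since every term of $D_\bullet$ is a projective $W$-module, one can construct a \emph{partial chain contraction}: $W$-linear maps $s_i \colon D_i \to D_{i+1}$ for $0 \le i \le n$ with $\partial_{i+1} s_i + s_{i-1}\partial_i = \mathrm{id}_{D_i}$ for all $i \le n$, where $s_{-1} = 0$. This is the usual inductive lifting argument: at stage $i$ one lifts a map whose image lies in $\ker \partial_i = \mathrm{im}\,\partial_{i+1}$ through the epimorphism $\partial_{i+1}$, using projectivity of $D_i$. The one thing to check is that it still works at the top degree $i = n$, which it does because exactness of $D_\bullet$ at $D_n$ --- that is, $\mathrm{H}_n(G; W) = 0$ --- is exactly what guarantees that $\partial_{n+1}$ surjects onto $\ker \partial_n$.

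Finally I would apply $\mathrm{Hom}_W(-, W)$ to the contraction identities. Writing $\delta^\bullet$ for the coboundary of $\mathrm{Hom}_W(D_\bullet, W)$, they become $(s_i)^\ast \delta^i + \delta^{i-1} (s_{i-1})^\ast = \mathrm{id}$ in each degree $i \le n$, so the maps $(s_{i-1})^\ast$ form a partial cochain contraction in the range $i \le n$. A one-line chase then kills the cohomology there: a cocycle $\xi$ of degree $i$ with $1 \le i \le n$ equals $\delta^{i-1}\big( (s_{i-1})^\ast \xi \big)$, while a degree-$0$ cocycle equals $(s_0)^\ast \delta^0 \xi = 0$. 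Since $\mathrm{Hom}_W(D_\bullet, W)$ computes $\mathrm{H}^\ast(G; W)$, we conclude $\mathrm{H}^i(G; W) = 0$ for $i \le n$.

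I do not expect a serious obstacle: once one observes that $W \otimes_{RG} C_\bullet$ is a complex of projective $W$-modules (because $W$ is a ring), the rest is formal --- a complex of projectives acyclic in a range is contractible there, and contractibility dualises. The points that require attention are the left/right bookkeeping in identifying $\mathrm{Hom}_{RG}(C_\bullet, W)$ with the $W$-dual of $D_\bullet$, and verifying that the inductive construction of the partial contraction really terminates correctly at degree $n$; it is also worth emphasising that the hypothesis that the coefficients form the Novikov \emph{ring} (rather than an arbitrary $RG$-module) is used precisely in making $D_\bullet$ a complex of projectives.
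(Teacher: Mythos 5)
Your proof is correct, and the core idea --- that $\widehat{RG}^\phi$ is a \emph{ring}, so $W \otimes_{RG} C_\bullet$ is a complex of projective $W$-modules, which under the vanishing-homology hypothesis admits a $W$-linear partial chain contraction up to degree $n$, and contractions dualise --- is exactly the right one, and matches the partial-chain-contraction methodology used prominently elsewhere in the survey (e.g.\ in the sketch of \cref{Kielak}). One small caution on a point you wave at: you should \emph{not} reach for the anti-automorphism $g \mapsto g^{-1}$ to sort out sides, because it does not preserve $\widehat{RG}^\phi$ --- it sends it to $\widehat{RG}^{-\phi}$, which is precisely the sign subtlety the survey flags in the remark following the lemma. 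Fortunately you do not need it: with the paper's convention (homology with right $RG$-modules, cohomology with left $RG$-modules), the adjunction $\mathrm{Hom}_W(W \otimes_{RG} C_i, W) \cong \mathrm{Hom}_{RG}(C_i, W)$, using $W$ as a $(W, RG)$-bimodule, identifies the $W$-dual of $D_\bullet$ with the cochain complex computing $\mathrm{H}^\ast(G;W)$ with the \emph{same} $\phi$, with no side-swapping trick required. With that clarification the argument is complete.
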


A quick word on the sign: the fact that we do not need to change the second $\phi$ above to $-\phi$ comes from our notational convention -- coefficients of homology are right modules, whereas those of cohomology are left modules. If one follows Brown and uses left modules everywhere, the sign needs to change.

We do not have an interpretation of the vanishing of Novikov cohomology in terms of properties of the group in general, but we do understand what happens in the top dimension.

\begin{thm}[Fisher, Theorem D in \cite{Fisher2024a}]
	\label{drop}
	Let $G$ be a group of type $\mathtt{FP}(R)$ with $n =  \mathrm{cd}_R(G)$. If $\phi \colon G \to \mathbb Z$ is an epimorphism with
	\[
	\mathrm{H}^n(G;\widehat{R G}^\phi) =  	\mathrm{H}^n(G;\widehat{R G}^{-\phi}) = 0
	\]
	then $\mathrm{cd}_R(\ker \phi) = n-1$.
\end{thm}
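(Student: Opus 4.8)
The plan is to work with $K=\ker\phi$: pick $t\in G$ with $\phi(t)=1$, view $G=K\rtimes_\alpha\mathbb Z$, so that $RG$ is the twisted Laurent polynomial ring $RK[t^{\pm1};\alpha]$ and $\widehat{RG}^{\pm\phi}$ the twisted Laurent series rings. Since $K\leqslant G$ we get $\mathrm{cd}_R K\leqslant\mathrm{cd}_R G=n$, and since $G$ is an extension of $\mathbb Z$ by $K$ we get $\mathrm{cd}_R G\leqslant\mathrm{cd}_R K+1$; so $\mathrm{cd}_R K\in\{n-1,n\}$ and the whole problem is to exclude $\mathrm{cd}_R K=n$. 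For this I would take a length‑$n$ resolution of $R$ over $RG$ by finitely generated projectives, restrict it to $RK$, and use the standard fact that a group of cohomological dimension $\leqslant n$ drops to $\leqslant n-1$ exactly when the $(n-1)$st syzygy is projective — which is detected by $\mathrm{Ext}^1_{RK}(-,F)=\mathrm{H}^n(K;F)$ for a free module $F$ containing the top term of the resolution as a summand. Since that top term, $\mathrm{Res}^G_K RG$, is already free over $RK$, everything reduces to showing $\mathrm{H}^n(K;\mathrm{Res}^G_K RG)=0$. This is the crucial free coefficient module: $K$ need not be of type $\mathtt{FP}_n(R)$, so $\mathrm{H}^n(K;-)$ need not commute with infinite direct sums and we cannot shortcut through $\mathrm{H}^n(K;RK)$.

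Two inputs drive the computation. First, because $G$ is of type $\mathtt{FP}(R)$ with $\mathrm{cd}_R G=n$, the functor $\mathrm{H}^n(G;-)$ is the cokernel of a fixed map (left multiplication by the transpose of the top differential), hence right exact, and $\mathrm{H}^{n+1}(G;-)$ vanishes on every module; concretely $\mathrm{H}^n(G;M)\cong\mathrm{H}^n(G;RG)\otimes_{RG}M$. Second, Shapiro's lemma puts the target and the hypothesis in the same language: $\mathrm{H}^n(K;\mathrm{Res}^G_K RG)=\mathrm{H}^n(G;\mathrm{Coind}^G_K\mathrm{Res}^G_K RG)$, while the hypothesis concerns $\mathrm{H}^n(G;\widehat{RG}^{\pm\phi})$. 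In suitable coordinates, elements of $\widehat{RG}^{\phi}$ are the $\mathbb Z$‑indexed families over $RK$ with support bounded below, those of $\widehat{RG}^{-\phi}$ those bounded above, those of $RG$ the finitely supported ones, and the doubly infinite Novikov bimodule $\widehat{\widehat{RG}}$ (all families) is isomorphic as a left $RG$‑module to $\mathrm{Coind}^G_K(RK)$.

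The engine is the Milnor–Mayer–Vietoris short exact sequence of left $RG$‑modules
\[
0\to RG\to\widehat{RG}^{\phi}\oplus\widehat{RG}^{-\phi}\to\mathrm{Coind}^G_K(RK)\to0,
\]
writing an arbitrary family as a bounded‑below part minus a bounded‑above part, with kernel the finitely supported families. Applying $\mathrm H^\ast(G;-)$ in degree $n$ gives $\mathrm{H}^{n+1}(G;RG)=0$ (as $\mathrm{cd}_R G=n$) and vanishing of the two middle terms (the hypothesis, using that $\mathrm H^n(G;-)$ commutes with the relevant direct sums since $G$ is $\mathtt{FP}_\infty$), whence $\mathrm{H}^n(K;RK)=0$. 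To reach the free coefficient I would run the same sequence with $RK$ replaced throughout by $\mathrm{Res}^G_K RG$: there is a short exact sequence $0\to A\to B^{+}\oplus B^{-}\to\mathrm{Coind}^G_K\mathrm{Res}^G_K RG\to0$ with $A$ the finitely supported (vector‑valued) families, so $\mathrm{H}^{n+1}(G;A)=0$ automatically, and everything comes down to $\mathrm{H}^n(G;B^{\pm})=0$ for the ``one‑sided, vector‑valued Novikov modules'' $B^{\pm}$. In the top‑degree–cokernel picture all three of $\widehat{RG}^{\phi}$, $\widehat{RG}^{-\phi}$ and $\mathrm{Coind}^G_K\mathrm{Res}^G_K RG$ present $\mathrm H^n(G;-)$ as the cokernel of one and the same twisted finite‑difference operator $T$, acting on sequences with support bounded below, bounded above, or unrestricted; the hypothesis says $T$ is onto in the first two cases, and I want it onto in the third — which is then immediate, since any family splits as a bounded‑below part plus a bounded‑above part, each is hit, and the two preimages sum termwise to a well‑defined preimage.

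The hard part, I expect, is the bookkeeping in that last step: identifying $B^{\pm}$ precisely and verifying $\mathrm{H}^n(G;B^{\pm})=0$ from the hypothesis on $\mathrm{H}^n(G;\widehat{RG}^{\pm\phi})$. These modules are \emph{not} literally direct sums of copies of $\widehat{RG}^{\pm\phi}$: the ``$t$‑power'' index of the Novikov series and the coset index of $\mathrm{Res}^G_K RG$ interleave, and the $K$‑action carries an $\alpha$‑twist depending on the coset, so the hypothesis does not apply on the nose. One must reorganise the double index into a single one, keep the $\alpha$‑twist straight, and push the vanishing through using that for the $\mathtt{FP}_\infty$ group $G$ the functor $\mathrm H^n(G;-)$ commutes with filtered colimits and direct products and is right exact in the top degree. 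Beyond the split‑trick there is no further conceptual input: once $T$ is correctly extracted, surjectivity on bounded‑below and on bounded‑above sequences forces surjectivity on all sequences, hence $\mathrm{H}^n(K;\mathrm{Res}^G_K RG)=0$, hence $\mathrm{cd}_R(\ker\phi)\leqslant n-1$, hence $=n-1$.
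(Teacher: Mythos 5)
Your syzygy/$\mathrm{Ext}^1$-splitting reduction -- that $\mathrm{cd}_R(\ker\phi)\leqslant n-1$ is equivalent to the vanishing of $\mathrm{H}^n(K;C_n|_K)$ for the top term $C_n$ of a length-$n$ finite projective $RG$-resolution, hence to $\mathrm{H}^n(K;\mathrm{Res}^G_K\,RG)=0$ -- is correct and is a genuinely different route from the paper's, which checks $\mathrm{H}^n(K;S)=0$ for \emph{every} $RK$-module $S$. It is a tidy observation and does avoid quantifying over all coefficient modules.

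However, the reduction does not make the problem materially easier, and the remaining step in your write-up has a real gap that you partly anticipate and then paper over. The module $\mathrm{Res}^G_K\,RG$ is a free $RK$-module of countably infinite rank, and $K$ is not known -- and under the hypotheses need not be -- of type $\mathtt{FP}_n(R)$, so you cannot pull the infinite direct sum out of $\mathrm{H}^n(K;-)$. Thus $\mathrm{H}^n(K;RK)=0$, which both you and the paper extract from the Milnor sequence, does not directly give $\mathrm{H}^n(K;\mathrm{Res}^G_K\,RG)=0$. Running the Milnor sequence with the bigger coefficient produces your $B^{\pm}$, and these are \emph{not} direct sums, direct products, or filtered colimits of copies of $\widehat{R G}^{\pm\phi}$: the bounded-below families valued in $\mathrm{Res}^G_K\,RG=\bigoplus_{j'}RK$ sit strictly between $\bigoplus_{j'}\widehat{R G}^{\phi}$ and $\prod_{j'}\widehat{R G}^{\phi}$, precisely because $\prod$ and $\bigoplus$ fail to commute. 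So the tools you list -- right exactness of $\mathrm{H}^n(G;-)$ in top degree and its compatibility with sums, products, and filtered colimits -- do not carry $\mathrm{H}^n(G;\widehat{R G}^{\pm\phi})=0$ to $\mathrm{H}^n(G;B^{\pm})=0$. Your final ``split-trick'' paragraph in fact silently substitutes $\widehat{R G}^{\pm\phi}$ for $B^{\pm}$ (``$T$ acting on sequences with support bounded below'' in the hypothesis means $RK$-valued families, i.e.\ $\widehat{R G}^{\phi}$, not the $RG$-valued families that make up $B^{+}$), so it only re-derives $\mathrm{H}^n(K;RK)=0$. Establishing $\mathrm{H}^n(G;B^{\pm})=0$ is exactly the point at which, as the paper hints at the end of its sketch, the $\mathtt{FP}(R)$ hypothesis on $G$ must be used in a nontrivial way; labelling it ``bookkeeping'' or ``no further conceptual input'' is not justified, and as written the proposal is incomplete at this step.
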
 
\begin{proof}[Sketch proof]
	We know that $\mathrm{cd}_R(\ker \phi) \leqslant n$. Using the description above, we only need to check that $\mathrm{H}^n(\ker \phi;S) = 0$ for every $R\ker \phi$-module $S$. In this sketch we are going to do this only for $S = R\ker \phi$, since it exemplifies the idea well, but is notationally less cumbersome.
	
Let $t \in G$ be such that $\phi(t) = 1$, as before. 	Consider $R \ker \phi$ as a module over itself.  Out of it, we can obtain the $RG$-module $RG$ by induction, but also the $RG$-module $\prod_{i \in \mathbb Z} (R \ker\phi) t^i$ by coinduction. The Novikov rings relate these two modules -- we have the following short exact sequence
\[
0 \to RG \to \widehat{R G}^\phi \oplus \widehat{R G}^{-\phi} \to \prod_{i \in \mathbb Z} (R \ker\phi) t^i \to 0
\] 
where the first map is the diagonal embedding, and the second is the obvious embedding on the first factor, and minus the obvious embedding on the second. Out of this short exact sequence we get a long exact sequence in cohomology
\begin{equation}
		 \tag{$\dagger$}
		 \label{les}
\begin{aligned}
	\begin{adjustbox}{valign=c,scale=0.75}
	$\cdots \to \mathrm{H}^n(G;\widehat{R G}^\phi) \oplus \mathrm{H}^n(G;\widehat{R G}^{-\phi}) \to \mathrm{H}^n(G;\prod_{i \in \mathbb Z} (R \ker\phi) t^i)  \to \mathrm{H}^{n+1}(G;R G) \to \cdots . $
	\end{adjustbox}
\end{aligned}	
\end{equation}

The two outside terms vanish: the first by assumption, the second by $\mathrm{cd}_R(G) = n$. The middle term by Shapiro's lemma is isomorphic to $\mathrm{H}^n(\ker \phi; R \ker\phi)$.

When dealing with a general $R\ker \phi$-module $S$, the condition that $G$ is of type  $\mathtt{FP}(R)$ becomes important.
\end{proof}

Back to the example of the surface group $\Sigma$ -- we had a weak form of fibring, since we have maps $\phi$ whose kernel has strictly lower cohomological dimension. At the same time, the top-dimensional $L^2$-homology of $\Sigma$ is zero.

\begin{thm}[Fisher, Theorem E in \cite{Fisher2024a}]
	Let $G$ be a RFRS group of type $\mathtt{FP}_{n-1}(\mathbb K)$, where $\mathbb K$ is a field, and suppose that $\mathrm{cd}_{\mathbb K}(G) = n$. The following are equivalent:
	\begin{enumerate}
	\item $\beta_n^{(2)}(G; \mathbb K) = 0$;
	\item there exists a finite index subgroup $G_1 \leqslant G$ and an epimorphism $\phi \colon G_1 \to \mathbb{Z}$ with $\mathrm{cd}_{\mathbb K}(\ker \phi) = n-1$.
\end{enumerate}	
\end{thm}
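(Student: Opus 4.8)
The statement is the top-degree counterpart of \cref{Fisher}, and the plan is to prove it by the same strategy, but carried out in the single degree $n=\mathrm{cd}_{\mathbb K}(G)$: the hypothesis ``$\mathtt{FP}_{n-1}(\mathbb K)$ together with $\mathrm{cd}_{\mathbb K}(G)=n$'' will play the role that ``$\mathtt{FP}_n(\mathbb K)$'' plays in \cref{Fisher}. I would first record two preliminary points. Since $\mathrm{cd}_{\mathbb K}(G)=n$, any projective resolution of the trivial $\mathbb K G$-module $\mathbb K$ that is finitely generated in degrees $\le n-1$ (one exists by $\mathtt{FP}_{n-1}(\mathbb K)$) has a projective $(n-1)$-st syzygy $Z$, hence yields a length-$n$ projective resolution $0\to Z\to P_{n-1}\to\dots\to P_0\to\mathbb K\to 0$ with $P_0,\dots,P_{n-1}$ finitely generated; this substitutes for a finite resolution in the top-degree part of all the arguments below. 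Secondly, for \emph{any} epimorphism $\phi$ from a finite-index subgroup $G_1$, writing $K=\ker\phi$ so that $G_1=K\rtimes\mathbb Z$, one has $\mathrm{cd}_{\mathbb K}(G_1)=\mathrm{cd}_{\mathbb K}(G)=n$ (Serre, as $G$ is torsion-free) while $\mathrm{cd}_{\mathbb K}(K\rtimes\mathbb Z)\le\mathrm{cd}_{\mathbb K}(K)+1$, so automatically $\mathrm{cd}_{\mathbb K}(K)\ge n-1$; in both directions it is the \emph{upper} bound $\mathrm{cd}_{\mathbb K}(K)\le n-1$ that carries content.

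For $(2)\Rightarrow(1)$ I would compute $\beta_n^{(2)}(G_1;\mathbb K)=\dim_{\mathcal D_{\mathbb K G_1}}\mathrm{H}_n(G_1;\mathcal D_{\mathbb K G_1})$ for $G_1=K\rtimes\mathbb Z$ of finite index in $G$ with $\mathrm{cd}_{\mathbb K}(K)=n-1$, via the Wang exact sequence of the mapping torus with coefficients in $\mathcal D_{\mathbb K G_1}$. As $\mathrm{H}_n(K;-)=0$, this identifies $\mathrm{H}_n(G_1;\mathcal D_{\mathbb K G_1})$ with $\ker\bigl(t_\ast-1\colon\mathrm{H}_{n-1}(K;\mathcal D_{\mathbb K G_1})\to\mathrm{H}_{n-1}(K;\mathcal D_{\mathbb K G_1})\bigr)$; and, exactly as in the proof of \cref{HennekeKielak}, $t_\ast-1$ is injective, because the Linnell/Hughes-free structure makes $\mathcal D_{\mathbb K G_1}$ flat over the twisted Laurent polynomial ring $\mathcal D_{\mathbb K K}[t^{\pm1}]$, which is free over $\mathcal D_{\mathbb K K}$, so $\mathrm{H}_{n-1}(K;\mathcal D_{\mathbb K G_1})$ is built from $\bigoplus_{i\in\mathbb Z}\mathrm{H}_{n-1}(K;\mathcal D_{\mathbb K K})$ on which $t_\ast-1$ is (shift)$-$(identity), and flatness preserves the resulting injectivity. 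Hence $\beta_n^{(2)}(G_1;\mathbb K)=0$, and $\beta_n^{(2)}(G;\mathbb K)=|G:G_1|^{-1}\beta_n^{(2)}(G_1;\mathbb K)=0$.

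For $(1)\Rightarrow(2)$: the hypothesis $\beta_n^{(2)}(G;\mathbb K)=0$ says $\mathrm{H}_n(G;\mathcal D_{\mathbb K G})=0$, i.e.\ the syzygy inclusion $Z\otimes_{\mathbb K G}\mathcal D_{\mathbb K G}\hookrightarrow P_{n-1}\otimes_{\mathbb K G}\mathcal D_{\mathbb K G}$ is injective; over the skew-field $\mathcal D_{\mathbb K G}$ it therefore splits, and since $P_{n-1}$ is finitely generated the retraction $\rho$ is a finite matrix over $\mathcal D_{\mathbb K G}$. I would then feed the entries of $\rho$ into the RFRS machinery underlying \cref{Fisher}: it produces a finite-index $G_1\le G$ and an epimorphism $\phi\colon G_1\to\mathbb Z$ such that, rewriting $\rho$ over $\mathcal D_{\mathbb K G_1}$, its entries are realised over the Novikov ring $\widehat{\mathbb K G_1}^{\phi}$, and -- passing to a deeper subgroup, as in \cref{Fisher} -- over $\widehat{\mathbb K G_1}^{-\phi}$ as well. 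Base-changing $\rho$, the inclusion $Z|_{G_1}\otimes\widehat{\mathbb K G_1}^{\pm\phi}\hookrightarrow P_{n-1}|_{G_1}\otimes\widehat{\mathbb K G_1}^{\pm\phi}$ splits; applying $\mathrm{Hom}_{\mathbb K G_1}(-,\widehat{\mathbb K G_1}^{\pm\phi})$ to the restricted length-$n$ resolution and using this splitting gives $\mathrm{H}^n(G_1;\widehat{\mathbb K G_1}^{\pm\phi})=0$ (a direct $\mathrm{Hom}$-duality argument, the top-degree analogue of \cref{Novikov cohom}, since we do not control lower degrees). Finally I would run the argument of \cref{drop}: the short exact sequence $0\to\mathbb K G_1\to\widehat{\mathbb K G_1}^{\phi}\oplus\widehat{\mathbb K G_1}^{-\phi}\to\prod_{i\in\mathbb Z}(\mathbb K K)t^i\to 0$, the long exact sequence \eqref{les} (whose term $\mathrm{H}^{n+1}(G_1;\mathbb K G_1)$ vanishes because $\mathrm{cd}_{\mathbb K}(G_1)=n$), and Shapiro's lemma together force $\mathrm{H}^n(K;-)$ to vanish, i.e.\ $\mathrm{cd}_{\mathbb K}(K)\le n-1$; with the preliminary lower bound this gives $\mathrm{cd}_{\mathbb K}(K)=n-1$.

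The main obstacle is twofold. The production of $\phi$ (and of $-\phi$) from the matrix $\rho$ is exactly the technical core of \cref{Fisher}/\cref{Kielak} -- the interaction between $\mathcal D_{\mathbb K G}$ and the Novikov rings of finite-index subgroups -- and there is no shortcut around it. More specific to the present statement: \cref{drop} is stated for groups of type $\mathtt{FP}(\mathbb K)$, whereas here $G$, and hence $G_1$, is only $\mathtt{FP}_{n-1}(\mathbb K)$ (for instance the Bestvina--Brady group on a flag $4$-cycle is RFRS and $\mathtt{FP}_1$ but not $\mathtt{FP}_2$, with $\mathrm{cd}=2$). One must therefore re-run that argument so as to conclude $\mathrm{H}^n(K;S)=0$ for \emph{all} coefficient modules $S$ and not merely for $S=\mathbb K K$: using that $P_{n-1}/Z$ has projective dimension $\le1$ and combining the splittings of the syzygy inclusion over $\widehat{\mathbb K G_1}^{\phi}$ and over $\widehat{\mathbb K G_1}^{-\phi}$ through the above short exact sequence, one shows the relevant syzygy of $K$ is already projective over $\mathbb K K$. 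Verifying this last step -- where the weakened hypothesis $\mathrm{cd}_{\mathbb K}(G)=n$, supplying the length-$n$ resolution, does its work -- is the part of the plan I expect to be most delicate.
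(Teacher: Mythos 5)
Your plan follows the same line as the survey's sketch for $(1)\Rightarrow(2)$ — split the top-degree differential over $\mathcal D_{\mathbb K G}$, push the splitting into Novikov rings via the RFRS machinery for both $\phi$ and $-\phi$, deduce vanishing of top-degree Novikov cohomology, and then run the long-exact-sequence argument of \cref{drop} to drop the cohomological dimension of the kernel. For $(2)\Rightarrow(1)$ you give a Wang-sequence argument instead of citing \cref{HennekeKielak} directly, which is the right move: \cref{HennekeKielak} as stated needs $K$ of type $\mathtt{FP}_n(\mathbb K)$, whereas here you only know $\mathrm{cd}_{\mathbb K}(K)=n-1$, and the Wang sequence together with the observation that $\mathrm{H}_n(K;-)$ vanishes identically circumvents this cleanly.

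The one point worth emphasising is that you correctly spot a gap that the survey's sketch glosses over: the sentence ``we conclude by applying \cref{drop}'' is not literally available, because \cref{drop} assumes $G$ is of type $\mathtt{FP}(\mathbb K)$, and here $G$, and hence $G_1$, is only assumed to be $\mathtt{FP}_{n-1}(\mathbb K)$ with $\mathrm{cd}_{\mathbb K}(G)=n$ (your Bestvina--Brady example is exactly the kind of group where this distinction bites). Vanishing of the top-degree Novikov cohomology plus the long exact sequence \eqref{les} and Shapiro's lemma only yield $\mathrm{H}^n(K;\mathbb K K)=0$, and without an $\mathtt{FP}$-type hypothesis this does not automatically propagate to $\mathrm{H}^n(K;S)=0$ for all $S$. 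Your proposed remedy --- show directly that the $(n-2)$-nd syzygy of $K$, namely $P_{n-1}/Z$ restricted to $\mathbb K K$, is projective, by exploiting the fact that it has $\mathbb K K$-projective dimension $\leqslant 1$ and combining the two Novikov splittings --- is the right idea, and is presumably close to what Fisher does in \cite{Fisher2024a}; but as written this is the part of your argument that is least complete, and it is where the real technical work of the top-degree version of the theorem lives. One further small point: you write that ``since $P_{n-1}$ is finitely generated the retraction $\rho$ is a finite matrix over $\mathcal D_{\mathbb K G}$'' --- this is true because $Z\otimes\mathcal D_{\mathbb K G}$ is finite-dimensional over the skew-field (it embeds in $P_{n-1}\otimes\mathcal D_{\mathbb K G}$), but the $\mathcal D$-basis on the target is then not a $\mathbb K G$-basis of $Z$, so the passage from ``entries of $\rho$ lie in the Novikov ring'' to ``the inclusion $Z\otimes\widehat{\mathbb K G_1}^{\pm\phi}\hookrightarrow P_{n-1}\otimes\widehat{\mathbb K G_1}^{\pm\phi}$ splits'' needs a word or two; the cleanest route is to feed the finite idempotent $k\times k$ matrix $\iota\circ\rho$ over $\mathcal D_{\mathbb K G}$ into the RFRS machinery rather than $\rho$ itself. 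This vagueness is, however, also present in the survey's sketch and is not a deviation on your part.
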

\begin{proof}[Sketch proof]
Here is a very rough outline of the argument. We start by taking a projective resolution $C_\bullet$ of the trivial $RG$-module $R$ that has length $n$, and in which $C_{n-1}$ is free and finitely generated. Now, the vanishing of the top-dimensional $L^2$-homology tells us that the top differential is injective after tensoring with $\mathcal D_{\mathbb K G}$. This immediately implies that	it admits a right inverse over $\mathcal D_{\mathbb K G}$. Using the usual yoga for RFRS groups, after passing to a finite-index subgroup $G_1$ we may find a right inverse in a Novikov ring $\widehat{ R G_1}^\phi$ for some epimorphism $\phi \colon G_1 \to \mathbb Z$ and for its negative. But then the codifferential admits a left inverse over the Novikov rings, and hence is onto. This kills top-dimensional Novikov cohomology, and we conclude by applying \cref{drop}. 
\end{proof}

This result has a bit of a history. The case $n=2$ was first proved under stronger assumptions of $G$ being hyperbolic and compact special in \cite{KielakLinton2023a}. In this case, $n-1=1$ and, using the Stallings--Swan theorem, one can conclude that the kernel $\ker \phi$ is in fact free, making $G$ into a virtually free-by-$\mathbb Z$ group (in fact, we need Dunwoody's work \cite{Dunwoody1979} here). This was used to resolve a conjecture of Baumslag -- all one-relator groups with torsion are virtually free-by-cyclic.

As mentioned before, Fisher--Klinge  obtained similar results for groups that are residually \{virtually nilpotent and poly-$\mathbb Z$\}, see Theorem A in \cite{FisherKlinge2024}.

\subsection{Coherence}

Algebraic fibring is useful not only in geometric topology, but also in group theory.

\begin{definition}[Coherence]
	\begin{enumerate}
		\item We say that a group $G$ is \emph{coherent} if and only if every finitely generated subgroup of $G$ is finitely presented.
		\item We say that a group $G$ is \emph{homologically coherent} if and only if every finitely generated subgroup of $G$ is of type $\mathtt{FP}_2$.
		\item We say that a ring $R$ is \emph{coherent} if and only if every finitely generated left $R$-module is finitely presented. 
	\end{enumerate}
\end{definition}	

It is clear that coherence implies homological coherence,  and so does coherence of the group ring $\mathbb Z G$. The implications are not known to be proper in either case.

Coherence is considered to be a property of (some) low-dimensional groups. Indeed, it is known for free and surface groups, for $3$-manifold groups (due to a celebrated theorem of Scott \cite{Scott1973}), for free-by-cyclic groups (by Feighn--Handel \cite{FeighnHandel1999}), and now also for one-relator groups, thanks to the recent breakthrough of Jaikin-Zapirain--Linton \cite{Jaikin-ZapirainLinton2023}. All of these groups have cohomological dimension at most $2$ over the rationals (well, fundamental groups of closed $3$-manifolds obviously do not, but because of Poincar\'e duality, they tend to behave as if they did).

The most important example of an incoherent group is the product of two free group $F_2 \times F_2$. Picking free bases $\{a,b\}$ and $\{c,d\}$ for the factors, we build an epimorphism $\phi \colon F_2 \times F_2 \to \mathbb Z$ by sending each of the chosen  generators to $1$. It can be shown directly that the kernel is finitely generated -- indeed, $\{ ac^{-1}, ad^{-1}, bc^{-1}, bd^{-1} \}$ generates $\ker \phi$. The kernel is not however of type $\mathtt{FP}_2$ -- to stay true to the spirit of the survey, we can compute $\beta_2^{(2)}(F_2 \times F_2) = 1$ using a Meyer--Vietoris sequence, and then appeal to  \cref{HennekeKielak}. One can also see it more directly by computing the second homology of $\ker \phi$ with trivial coefficients, and checking that it is not finitely generated, which cannot happen for groups of type $\mathtt{FP}_2$.

Since coherence passes to subgroups, $F_2 \times F_2$ is often used as a poison subgroup in this context. More generally, one can use as a poison subgroup any group $G$ that is RFRS and has $\beta_1^{(2)}(G) = 0$ but $\beta_2^{(2)}(G) \neq 0$, as such a group will contain a subgroup (the kernel of a virtual epimorphism to $\mathbb Z$) that is finitely generated but not of type $\mathtt{FP}_2(\mathbb Q)$.

More generally, this technique can be used to look into \emph{higher coherence} -- situations in which every subgroup of type $\mathtt{FP}_n$ is automatically of type $\mathtt{FP}_{n+1}$. As before, containing a subgroup $G$ that is RFRS and has all $L^2$-Betti numbers zero except in dimension $n+1$ provides a witness for the lack of such higher coherence. This was used for example by Llosa Isenrich--Martelli--Py \cite{Llosa-Isenrichetal2024}, see also \cite{Fisher2024,Kudlinska2023}.

\section{Higher dimensions}
\label{sec higher dim}

\subsection{Poincar\'e-duality groups}

In the previous section we looked at a weaker notion of fibring for groups, when it was enough for the cohomological dimension to drop. Now we are going to look at a stronger one, much closer to the original motivation coming from aspherical manifolds.

\begin{definition}
	A group $G$ is said to be a \emph{Poincar\'e-duality group} over $R$ of dimension $n$, written $\mathrm{PD}^n(R)$, if and only if $G$ is of type $\mathtt{FP}(R)$ and 
	\[
	\mathrm{H}^i(G; RG) \cong \left\{ \begin{array}{cl} R & \textrm{ if } i=n \\ 0 & \textrm{ otherwise} \end{array} \right.
	\]
	as $R$-modules.

The module $\mathrm{H}^n(G; RG)$ has a natural left $RG$-action, known as the \emph{orientation action}. The $\mathrm{PD}^n(R)$-group is \emph{orientable} if and only if the orientation action is trivial.
\end{definition}

As for finiteness properties, one writes $\mathrm{PD}^n$ for $\mathrm{PD}^n(\mathbb Z)$, and again this property implies $\mathrm{PD}^n(R)$ over all rings.

One can also define $\mathrm{PD}^n(R)$-pairs that mimic the behaviour of pairs $(M,\partial M)$ of a manifold $M$ with boundary $\partial M$. 

It might not be immediately clear from the (succinct) definition, but orientable $\mathrm{PD}^n(R)$-groups exhibit duality between homology and cohomology. To see this, take a finite resolution $C_\bullet$ of length $n$ of the trivial $RG$-module $R$ by finitely generated projective modules. We can dualise it, that is, consider the cochain complex with terms $C^i = \mathrm{Hom}_{RG}(C_i, RG)$. Since every $C_i$ is finitely generated and projective, so is every $C^i$. Now, the cohomology of the dual cochain complex computes precisely $\mathrm{H}^*(G; RG)$, and our assumption tells us that this cochain complex is exact everywhere, the first codifferential is injective, and the cokernel of the last is the trivial $RG$-module $R$. Hence, reversing the direction of the arrows, the chain complex $(C^{n-i})_i$ is a resolution of $R$. Thus, computing the $i$th homology is the same as computing the $(n-i)$th cohomology.

In the non-orientable case, homology and cohomology are still dual, but there is an additional twisting due to tensoring with the module $\mathrm{H}^n(G; RG)$.

Vice-versa, having duality between homology and cohomology (and the right finiteness properties) allows one to recover the definition above: homology with coefficients in $RG$ is trivial in all dimensions except zero, where it is $R$, by the very definition of group homology.

Since closed aspherical manifolds exhibit this duality, their fundamental groups are obvious examples of Poincar\'e-duality groups (over $\mathbb Z$). There are no known examples of $\mathrm{PD}^n$-groups that are finitely presented but not fundamental groups of aspherical manifolds. If one drops the requirement of the group being finitely presented, we have uncountably many examples, but all in dimension at least $4$, by the work of Davis \cite{Davis2000} and Leary \cite{Leary2018}. It is not known if there are such examples in dimension $3$.

\begin{thm}[Eckmann--Linnell--M\"uller \cite{EckmannMueller1980,EckmannLinnell1982,EckmannLinnell1983}, Bowditch \cite{Bowditch2004}, see also \cite{KielakKropholler2021}]
	\label{PD2}
	If $G$ is a $\mathrm{PD}^2(\mathbb K)$-group and $\mathbb K$ is a field, then $G$ is virtually a surface group. If $\mathbb K = \mathbb Z$, then $G$ is a surface group.
\end{thm}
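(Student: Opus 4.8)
The plan is to run the argument in two stages: first reduce to the case $\mathbb K = \mathbb Q$ (or a prime field) by a standard base-change/finiteness manipulation, and then produce a surface-group structure by exhibiting a suitable splitting of $G$. The key structural input I would use is that a $\mathrm{PD}^2(\mathbb K)$-group is in particular of type $\mathtt{FP}(\mathbb K)$ with $\mathrm{cd}_{\mathbb K}(G) = 2$ and hence (after checking finite presentability, which follows from $\mathrm{PD}^2$ over $\mathbb Z$, or from a separate argument over a field) acts nicely on a tree or admits an accessible splitting. Concretely, the first step is to show that $G$ has two ends or is one-ended; since $\mathrm{H}^2(G;\mathbb K G) \cong \mathbb K \neq 0$ and $\mathrm{H}^1(G;\mathbb K G)=0$, the group is one-ended (or finite, which is excluded). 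One-endedness rules out nontrivial free splittings.

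The second, and main, step is to build the surface. I would like to find an infinite-order element generating a $\mathrm{PD}^1(\mathbb K)$-subgroup $C \cong \mathbb Z$ and show that $G$ splits as an amalgam or HNN extension over $C$ in a way compatible with the duality — this is the heart of the matter and is where I expect the real work (and the need for the cited results of Eckmann--Linnell--M\"uller and Bowditch) to lie. The Eckmann--Linnell--M\"uller approach proceeds by analysing the Euler characteristic: for an orientable $\mathrm{PD}^2(\mathbb K)$-group one has $\chi(G) = 2 - 2g$ for some integer $g$, and if $\chi(G) < 0$ one produces a splitting over $\mathbb Z$ by a careful cohomological argument (roughly, finding an essential annulus), then induct on $|\chi|$; the base cases $\chi(G)=0$ (giving the torus or Klein bottle group) and the non-orientable analogue are handled separately. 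Bowditch's contribution is the more conceptual route via the structure of the Cayley graph and a notion of coarse Poincar\'e duality, which avoids some of the delicate inductive bookkeeping. Either way, at the end one has identified $G$ with the fundamental group of a closed surface, possibly only after passing to a finite-index subgroup: this is exactly why the statement is ``virtually a surface group'' in general.

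The final step is the improvement to ``$G$ is a surface group'' when $\mathbb K = \mathbb Z$. Here one uses that an integral $\mathrm{PD}^2$-group containing a finite-index surface subgroup $S$ must itself be a surface group: pass to the aspherical $2$-manifold $\Sigma$ with $\pi_1 \Sigma = S$, observe that $G$ acts freely and properly on $\widetilde\Sigma \cong \mathbb R^2$, and note that the quotient $\widetilde\Sigma / G$ is a closed aspherical $2$-manifold (using that $G$ is torsion-free, which follows from having finite cohomological dimension over $\mathbb Z$) with fundamental group $G$; hence $G$ is a surface group. The main obstacle throughout is the second step — producing the splitting over $\mathbb Z$ from purely (co)homological hypotheses — since there is no manifold or geometry available \emph{a priori}, and the argument must bootstrap a geometric decomposition out of the algebraic duality; this is precisely the content of the deep papers cited, and I would not attempt to reprove it from scratch here.
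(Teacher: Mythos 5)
The paper does not actually give a proof of \cref{PD2}; it is stated as a citation to the work of Eckmann--Linnell--M\"uller and Bowditch (with \cite{KielakKropholler2021} as a further reference), and no argument is supplied. So there is no ``paper's proof'' to compare against, and your outline of the two historical routes — the Euler-characteristic/splitting induction of Eckmann--Linnell--M\"uller on the one hand, and Bowditch's coarse Poincar\'e duality on the other — is a fair high-level description of the literature. Your honest deferral of the hard splitting step to the cited papers is appropriate.

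That said, your final step contains a genuine gap. You write that, having found a finite-index surface subgroup $S \leqslant G$ with $\pi_1\Sigma = S$, one can ``observe that $G$ acts freely and properly on $\widetilde\Sigma \cong \mathbb R^2$.'' But $G$ does not act on $\widetilde\Sigma$ a priori: only $S$ does. To extend the $S$-action to a $G$-action one has to realise the finite group $G/S$ (or rather its image in $\mathrm{Out}(S)$, via Dehn--Nielsen--Baer) as a group of homeomorphisms of $\Sigma$. This is precisely the Nielsen realisation problem, settled by Kerckhoff, or alternatively one invokes the convergence group theorem of Tukia--Gabai--Casson--Jungreis in the hyperbolic case. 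Neither is an ``observation''; both are substantial theorems. Moreover, your proposed organisation (prove the virtual statement over fields first, then upgrade to the exact statement over $\mathbb Z$) does not match how these results were actually obtained: Eckmann--Linnell--M\"uller proved the integral case directly, and Bowditch's field-coefficient theorem came two decades later by entirely different means, relying on the convergence group theorem. If you want to present the $\mathbb Z$-case as a consequence of the field case plus the structure of torsion-free virtually-surface groups, you need to cite Kerckhoff or Tukia--Gabai--Casson--Jungreis explicitly at that point, not present it as immediate.
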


In the context of Poincar\'e-duality groups, virtual fibring should work exactly the same as for closed aspherical manifolds.

\begin{conj}
	\label{PD conj}
Let $G$ be a $\mathrm{PD}^n$-group that is RFRS. The following are equivalent:
\begin{enumerate}
	\item $\beta_i^{(2)}(G; \mathbb K) = 0$ for all $i$ and all fields $\mathbb K$;
	\item there exists a finite index subgroup $G_1 \leqslant G$ and an epimorphism $\phi \colon G_1 \to \mathbb{Z}$ such that $\ker \phi$ is a $\mathrm{PD}^{n-1}$-group.
\end{enumerate}
\end{conj}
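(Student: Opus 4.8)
A proof would presumably proceed by splitting the equivalence and leaning on \cref{PD thm}, which already does most of the analytic work. The implication $(2) \Rightarrow (1)$ should require no new input: if $\ker\phi$ is a $\mathrm{PD}^{n-1}$-group then it is of type $\mathtt{FP}$, hence of type $\mathtt{FP}_i(\mathbb K)$ for every $i$ and every field $\mathbb K$; since $\mathbb Z$ is free the extension splits, so $G_1 = \ker\phi \rtimes \mathbb Z$, and \cref{HennekeKielak} gives $\beta_i^{(2)}(G_1;\mathbb K) = 0$ for all $i$ and all $\mathbb K$. Passing from the finite-index subgroup $G_1$ back to $G$ (either by multiplicativity of $L^2$-Betti numbers over each field, or by the easy direction of \cref{Fisher}, which does not need RFRS) then yields $\beta_i^{(2)}(G;\mathbb K) = 0$ for all $i$ and all $\mathbb K$.

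For $(1) \Rightarrow (2)$ the plan is to apply \cref{PD thm} directly: the hypothesis is exactly its right-hand side, so it produces a finite-index subgroup $G_1 \leqslant G$ of the form $G_1 = K \rtimes \mathbb Z$ with $K$ an \emph{orientable} $\mathrm{PD}^{n-1}(\mathbb K)$-group for \emph{every} field $\mathbb K$. What is left is to upgrade $K$ from a Poincar\'e-duality group over all fields to a $\mathrm{PD}^{n-1}$-group over $\mathbb Z$. Two ingredients come for free. By Strebel's theorem an infinite-index subgroup of the $\mathrm{PD}^n(\mathbb Z)$-group $G_1$ has $\mathrm{cd}_{\mathbb Z} \leqslant n-1$, and together with $\mathrm{cd}_{\mathbb Q}(K) = n-1$ this forces $\mathrm{cd}_{\mathbb Z}(K) = n-1$ exactly. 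Moreover, Bieri's theorem on extensions of Poincar\'e-duality groups says that once $K$ is known to be of type $\mathtt{FP}(\mathbb Z)$, the extension $1 \to K \to G_1 \to \mathbb Z \to 1$ with $G_1$ a $\mathrm{PD}^n(\mathbb Z)$-group and $\mathbb Z$ a $\mathrm{PD}^1(\mathbb Z)$-group automatically makes $K$ a $\mathrm{PD}^{n-1}(\mathbb Z)$-group. So the whole problem reduces to proving that $K = \ker\phi$ is of type $\mathtt{FP}(\mathbb Z)$; by \cref{Sikorav} this is equivalent to the vanishing of the integral Novikov homology $\mathrm{H}_i(G_1; \widehat{\mathbb Z G_1}^{\phi}) = \mathrm{H}_i(G_1; \widehat{\mathbb Z G_1}^{-\phi}) = 0$ for all $i$.

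This last reduction is exactly where the obstacle lies, and the reason the statement is only conjectural. The machinery behind \cref{Fisher} and \cref{PD thm} --- the Linnell skew-field $\mathcal D_{\mathbb K G}$ and the trick of inverting matrices first in $\mathcal D_{\mathbb K G}$ and then in a Novikov ring --- is intrinsically a field-coefficient phenomenon: it uses that in $\mathbb K G$ every element supported on a singleton is invertible (which fails over $\mathbb Z$) and that there is an ambient skew-field to localise into. Bridging the gap between ``$\mathrm{PD}$ over all fields'' and ``$\mathrm{PD}$ over $\mathbb Z$'' amounts to ruling out (possibly infinitely generated) torsion in $\mathrm{H}_\ast(K;\mathbb Z)$, equivalently torsion in $\mathrm{H}^{n-1}(K;\mathbb Z K)$, and the field-by-field information delivered by the current methods does not obviously assemble into such a statement. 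The most promising lines of attack would be either to develop an integral refinement of the \cref{Sikorav}/RFRS argument that outputs vanishing of $\widehat{\mathbb Z G_1}^{\pm\phi}$-homology, or to feed the $\mathbb Q$- and $\mathbb F_p$-level conclusions through an arithmetic-fracture (Bockstein) argument into Bieri's homological criterion for being a $\mathrm{PD}^{n-1}(\mathbb Z)$-group; but closing this gap appears to need genuinely new input beyond the theorems surveyed here.
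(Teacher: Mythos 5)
This statement is labelled as a conjecture in the paper, and the paper offers no proof; it only remarks (just after \cref{PD thm}) that the obstacle to upgrading \cref{PD thm} to \cref{PD conj} is that type $\mathtt{FP}(\mathbb K)$ over all fields $\mathbb K$ does not imply type $\mathtt{FP}(\mathbb Z)$, citing a Bieri--Strebel example built from an Abels group that appears as a kernel of an epimorphism to $\mathbb Z$ from a finitely presented group. You correctly recognise that no complete proof is available and, rather than manufacturing a spurious one, give an accurate account of the state of play. Your analysis matches and in fact sharpens the paper's own discussion: the direction $(2)\Rightarrow(1)$ is indeed handled by \cref{HennekeKielak} (together with multiplicativity under passage to the finite-index subgroup $G_1$); for $(1)\Rightarrow(2)$ your reduction via \cref{PD thm}, Strebel's subgroup theorem to pin down $\mathrm{cd}_{\mathbb Z}(\ker\phi) = n-1$, and Bieri's theorem on normal subgroups of $\mathrm{PD}^n$-groups to reduce the problem to establishing type $\mathtt{FP}(\mathbb Z)$ for $\ker\phi$, is correct and is the right way to isolate the outstanding difficulty. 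Your further reformulation via integral Novikov homology (through \cref{Sikorav}), and your diagnosis that the Linnell/Novikov machinery underlying \cref{Fisher} and \cref{PD thm} is intrinsically a field phenomenon, agree with the paper's own framing; the suggested arithmetic-fracture/Bockstein route is a sensible speculation but, as you say, not currently known to close the gap. One small caveat worth keeping in mind: the paper deliberately states condition (1) in terms of $L^2$-Betti numbers over \emph{all} fields precisely because the characteristic-$p$ information may be essential (cf.\ the Avramidi--Okun--Schreve example where vanishing fails only in odd positive characteristic), so any Bockstein-style argument would need to use all primes simultaneously, not just compare $\mathbb Q$ with a single $\mathbb F_p$.
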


There is actually strong evidence towards this conjecture.

\begin{thm}[\cite{Fisheretal2025}]
	\label{PD thm}
	Let $G$ be a $\mathrm{PD}^n$-group that is RFRS. The following are equivalent:
	\begin{enumerate}
		\item $\beta_i^{(2)}(G; \mathbb K) = 0$ for all $i$ and all fields $\mathbb K$;
		\item there exists a finite index subgroup $G_1 \leqslant G$ and an epimorphism $\phi \colon G_1 \to \mathbb{Z}$ such that $\ker \phi$ is an orientable  $\mathrm{PD}^{n-1}(\mathbb K)$-group for every field $\mathbb K$.
	\end{enumerate}
\end{thm}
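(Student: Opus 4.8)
The plan is to prove the two implications separately, with essentially all of the work in $(1)\Rightarrow(2)$.

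For $(2)\Rightarrow(1)$ I would argue directly. If $G_1$ and $\phi$ are as in $(2)$, then for every field $\mathbb K$ the kernel $\ker\phi$ is an orientable $\mathrm{PD}^{n-1}(\mathbb K)$-group, hence in particular of type $\mathtt{FP}(\mathbb K)$; so $G_1\cong\ker\phi\rtimes\mathbb Z$ is of type $\mathtt{FP}(\mathbb K)$ and its $L^2$-Betti numbers over $\mathbb K$ are defined. The Generalised Mapping Torus Theorem (\cref{HennekeKielak}) then gives $\beta_i^{(2)}(G_1;\mathbb K)=0$ for all $i$ and all $\mathbb K$, and multiplicativity of $L^2$-Betti numbers in the index yields $\beta_i^{(2)}(G;\mathbb K)=|G:G_1|^{-1}\beta_i^{(2)}(G_1;\mathbb K)=0$. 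This direction uses neither the RFRS nor the Poincaré-duality hypothesis.

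For $(1)\Rightarrow(2)$ I would first reduce to the case that $G$ is \emph{orientable}, by replacing $G$ with the kernel of its integral orientation character: this is an index-$\leqslant 2$ subgroup, still RFRS and still $\mathrm{PD}^n$, with all $\beta_i^{(2)}(\,\cdot\,;\mathbb K)$ vanishing by multiplicativity, and a conclusion of the form $(2)$ for it is one for $G$. Since $G$ is $\mathrm{PD}^n$ it is of type $\mathtt{FP}(\mathbb Z)$, hence of type $\mathtt{FP}_\infty(\mathbb K)$ over every field, so \cref{Fisher} applies over each $\mathbb K$ individually and produces \emph{some} finite-index subgroup carrying an algebraic $\mathtt{FP}_\infty(\mathbb K)$-fibration. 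The substantive task — and the step I expect to be the main obstacle — is to arrange a \emph{single} finite-index $G_1\leqslant G$ and a \emph{single} epimorphism $\phi\colon G_1\to\mathbb Z$ that fibres algebraically over \emph{all} fields simultaneously. Here I would use that $\mathtt{FP}_\infty(\mathbb K)$ depends only on $\mathrm{char}\,\mathbb K$, so only $\mathbb Q$ and the $\mathbb F_p$ matter: running the RFRS construction underlying \cref{Kielak}/\cref{Fisher} over $\mathbb Q$, starting from a \emph{finite} free $\mathbb Z G$-resolution $C_\bullet$ of $\mathbb Z$ (available because $G$ is $\mathtt{FP}(\mathbb Z)$), yields $G_1$, $\phi$, and finite matrices over $\mathcal D_{\mathbb Q G_1}$ realising a chain contraction of $C_\bullet\otimes\mathcal D_{\mathbb Q G_1}$ over $\widehat{\mathbb Q G_1}^{\,\phi}$ and over $\widehat{\mathbb Q G_1}^{-\phi}$. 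As the differentials of $C_\bullet$ have entries in $\mathbb Z G_1$, a spreading-out argument (the relevant inner ranks / Dieudonné-type invariants being integral) shows the same $\phi$ realises chain contractions of $C_\bullet\otimes\widehat{\mathbb F_p G_1}^{\pm\phi}$ for all but finitely many $p$; for each exceptional prime one invokes \cref{Fisher} over $\mathbb F_p$ and perturbs $\phi$ slightly, using that algebraic fibring is an \emph{open} condition on the homomorphism (the openness in \cref{BNS}, reflected in the ``open property'' of Novikov-invertibility in the sketch of \cref{Kielak}) so as not to undo any characteristic already handled. Finitely many such steps terminate, and by \cref{Sikorav} applied to $\phi$ and to $-\phi$, combined with \cref{BNS}(3) for $S=\{\phi\}$, the kernel $\ker\phi$ is of type $\mathtt{FP}_\infty(\mathbb K)$ for every field $\mathbb K$.

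It then remains to upgrade ``type $\mathtt{FP}_\infty(\mathbb K)$'' to ``orientable $\mathrm{PD}^{n-1}(\mathbb K)$''. Fixing $\mathbb K$, the group $G_1$ has finite index in the orientable $\mathrm{PD}^n(\mathbb Z)$-group $G$, hence is an orientable $\mathrm{PD}^n(\mathbb K)$-group, and $\mathrm{cd}_{\mathbb K}(\ker\phi)\leqslant\mathrm{cd}_{\mathbb Z}(G)=n$ by restriction, so $\ker\phi$ is in fact of type $\mathtt{FP}(\mathbb K)$. Now $1\to\ker\phi\to G_1\xrightarrow{\phi}\mathbb Z\to 1$ is an extension of the $\mathrm{PD}^1(\mathbb K)$-group $\mathbb Z$ by the $\mathtt{FP}_\infty(\mathbb K)$-group $\ker\phi$ with $\mathrm{PD}^n(\mathbb K)$ middle term, so by the standard theorem on Poincaré-duality extensions — equivalently, feeding Strebel's bound $\mathrm{cd}_{\mathbb K}(\ker\phi)\leqslant n-1$ into the Wang sequence computing $\mathrm H^\ast(G_1;\mathbb K G_1)$ from $\mathrm H^\ast(\ker\phi;\mathbb K\ker\phi)$ — the group $\ker\phi$ is a $\mathrm{PD}^{n-1}(\mathbb K)$-group whose orientation character is the restriction of that of $G_1$ (the orientable quotient $\mathbb Z$ contributing nothing), and hence trivial. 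As $\mathbb K$ was arbitrary, $\ker\phi$ is an orientable $\mathrm{PD}^{n-1}(\mathbb K)$-group for every field $\mathbb K$, which is $(2)$.
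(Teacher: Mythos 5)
The overall architecture you propose — treat $(2)\Rightarrow(1)$ via \cref{HennekeKielak} and multiplicativity, reduce $(1)\Rightarrow(2)$ to the orientable case, apply \cref{Fisher} over each field, enforce simultaneity, and then promote $\mathtt{FP}(\mathbb K)$ to orientable $\mathrm{PD}^{n-1}(\mathbb K)$ via a Wang/Shapiro argument using Strebel's bound — is the right shape, and several of the steps (the $(2)\Rightarrow(1)$ direction, the orientation double cover reduction, and the final PD-promotion, which is essentially the mechanism in the sketch of \cref{PD single K thm}) are sound. The survey itself only sketches the single-field version, so the content of \cref{PD thm} is precisely the simultaneity across all fields, and that is exactly where your argument breaks.

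The ``spreading-out'' step is a genuine gap. Your claim is that a chain contraction over $\mathcal D_{\mathbb Q G_1}$, realised in $\widehat{\mathbb Q G_1}^{\pm\phi}$, will ``reduce mod $p$'' for all but finitely many $p$ because the differentials of $C_\bullet$ lie in $\mathbb Z G_1$. But the contraction matrices do \emph{not} lie over $\mathbb Z G_1$: they live in $\mathcal D_{\mathbb Q G_1}$ and, after the RFRS rewriting, in a Novikov ring over $\mathbb Q$. These are formal Laurent-type series with unbounded rational denominators, and there is no reduction map $\mathcal D_{\mathbb Q G_1}\to \mathcal D_{\mathbb F_p G_1}$ nor $\widehat{\mathbb Q G_1}^{\phi}\to\widehat{\mathbb F_p G_1}^{\phi}$; the two Linnell skew-fields (Linnell's construction in characteristic $0$, Jaikin-Zapirain's in characteristic $p$) are not related by base change. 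The phrase ``Dieudonné-type invariants being integral'' doesn't pin down anything that would make this work. Independently, the ``finitely many exceptional primes'' claim is unjustified, and the perturbation clean-up has a termination problem: handling one prime by an open perturbation can in principle require leaving the (intersection over infinitely many already-handled primes of) open sets, which need not be open, so no finite induction closes the argument. Note also that the Avramidi--Okun--Schreve example discussed in \Cref{sec higher dim} shows $\beta_i^{(2)}(\cdot;\mathbb Q)=0$ does \emph{not} control $\beta_i^{(2)}(\cdot;\mathbb F_p)$ for RFRS $\mathrm{PD}$-groups, so nothing characteristic-$0$-first can be expected to ``spread out'' for free; one really has to use the hypothesis over all fields, and your step 4 never does. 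Until a correct uniformity argument replaces this step, the proof of $(1)\Rightarrow(2)$ is incomplete.
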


The difficulty in upgrading \cref{PD thm} to \cref{PD conj} lies in the fact that being of type $\mathtt{FP}_n(\mathbb K)$ over every field $\mathbb K$ does not imply being of type $\mathtt{FP}_n$. One of Abels's groups can be used to build a counterexample, as explained by Bieri--Strebel \cite{BieriStrebel1980}. There is also a new counterexample in the forthcoming work of Robert Kropholler. The Bieri--Strebel example actually appears as a kernel of an epimorphism to $\mathbb Z$ from a finitely presented group. The problem seems subtle, and its resolution will probably require heavy use of Poincar\'e duality.

If one zooms in at one field at the time, a version of \cref{PD conj} is true.

\begin{thm}[\cite{Fisheretal2025}]
	\label{PD single K thm}
	Let $\mathbb K$ be a field.
	Let $G$ be an orientable $\mathrm{PD}^n(\mathbb K)$-group that is RFRS. The following are equivalent:
	\begin{enumerate}
		\item $\beta_i^{(2)}(G; \mathbb K) = 0$ for all $i$;
		\item there exists a finite index subgroup $G_1 \leqslant G$ and an epimorphism $\phi \colon G_1 \to \mathbb{Z}$ such that $\ker \phi$ is an orientable $\mathrm{PD}^{n-1}(\mathbb K)$-group.
	\end{enumerate}
\end{thm}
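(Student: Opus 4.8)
The plan is to prove the two implications by quite different means: \textup{(2)}$\Rightarrow$\textup{(1)} will be a short consequence of the Generalised Mapping Torus Theorem together with multiplicativity of $L^2$-Betti numbers in the index, while \textup{(1)}$\Rightarrow$\textup{(2)} will combine \cref{Fisher} with a classical ``descent of Poincar\'e duality'' through a mapping-torus splitting. I expect essentially all of the difficulty to sit in the forward direction.

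For \textup{(2)}$\Rightarrow$\textup{(1)}: first note that if $K := \ker\phi$ is an orientable $\mathrm{PD}^{n-1}(\mathbb K)$-group, then it is of type $\mathtt{FP}(\mathbb K)$, hence of type $\mathtt{FP}_m(\mathbb K)$ for every $m$, and $G_1 = K \rtimes \mathbb Z$ is of type $\mathtt{FP}(\mathbb K)$ as an extension of $\mathbb Z$ by $K$; moreover $G_1$, being finite-index in the RFRS group $G$, is itself RFRS, so all of its $L^2$-Betti numbers over $\mathbb K$ are defined. Then I would apply \cref{HennekeKielak} to the semidirect product $G_1 = K \rtimes \mathbb Z$ to get $\beta_m^{(2)}(G_1;\mathbb K) = 0$ for all $m$, and finally use that these numbers scale by the index when passing from $G$ to $G_1$ (via the Linnell property of $\mathcal D_{\mathbb K(-)}$) to conclude $\beta_m^{(2)}(G;\mathbb K) = 0$ for all $m$.

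For \textup{(1)}$\Rightarrow$\textup{(2)}: since $G$ is $\mathrm{PD}^n(\mathbb K)$ it is of type $\mathtt{FP}(\mathbb K)$, in particular $\mathtt{FP}_n(\mathbb K)$, and $\beta_i^{(2)}(G;\mathbb K) = 0$ for $i \leqslant n$; so \cref{Fisher} produces a finite-index subgroup $G_1 \leqslant G$ and an epimorphism $\phi \colon G_1 \twoheadrightarrow \mathbb Z$ with $K := \ker\phi$ of type $\mathtt{FP}_n(\mathbb K)$ and $G_1 = K \rtimes \mathbb Z$. Now $G_1$ is again an orientable $\mathrm{PD}^n(\mathbb K)$-group, and I would invoke Strebel's theorem on infinite-index subgroups of Poincar\'e-duality groups to get $\mathrm{cd}_{\mathbb K}(K) \leqslant n-1$; combined with type $\mathtt{FP}_n(\mathbb K)$, the usual syzygy argument (the $(n-1)$-st kernel of a resolution of $\mathbb K$ that is finitely generated up to degree $n$ is finitely generated, and is projective because $\mathrm{cd}_{\mathbb K}(K)\leqslant n-1$) upgrades this to $K$ being of type $\mathtt{FP}(\mathbb K)$. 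To finish, I would compute $\mathrm{H}^\ast(K;\mathbb K K)$ from $\mathrm{H}^\ast(G_1;\mathbb K G_1)$: viewing $\mathbb K G_1$ as a free left $\mathbb K K$-module $\bigoplus_{i\in\mathbb Z}\mathbb K K\,t^i$ on which $t$ acts by the shift composed with the conjugation automorphism, and using that $K$ is of type $\mathtt{FP}(\mathbb K)$ so cohomology commutes with this direct sum, the Wang exact sequence (equivalently, the two-column Lyndon--Hochschild--Serre spectral sequence) for $1 \to K \to G_1 \to \mathbb Z \to 1$ collapses to $\mathbb K K$-isomorphisms $\mathrm{H}^m(G_1;\mathbb K G_1) \cong \mathrm{H}^{m-1}(K;\mathbb K K)$. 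Feeding in that $G_1$ is orientable $\mathrm{PD}^n(\mathbb K)$ then forces $\mathrm{H}^{n-1}(K;\mathbb K K) \cong \mathbb K$ with trivial $\mathbb K K$-action and $\mathrm{H}^j(K;\mathbb K K) = 0$ for $j\neq n-1$, which together with type $\mathtt{FP}(\mathbb K)$ is exactly the assertion that $K$ is an orientable $\mathrm{PD}^{n-1}(\mathbb K)$-group.

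The step I expect to be the real obstacle is \cref{Fisher} itself -- it is the only place where the RFRS hypothesis and the vanishing of $L^2$-Betti numbers are converted into an actual algebraic fibration. Given that, the remaining work is classical in spirit, but the plan has to be executed with care over a general coefficient field $\mathbb K$: one must check that Strebel's cohomological-dimension bound, the permanence of $\mathrm{PD}^n(\mathbb K)$ and of orientability under passing to finite-index subgroups, and the collapse of the Wang/spectral-sequence argument all go through for arbitrary $\mathbb K$ and not merely for $\mathbb Z$. A related subtlety -- which, as the discussion after \cref{PD thm} indicates, is precisely why this theorem is stated one field at a time -- is that ``type $\mathtt{FP}_n(\mathbb K)$ for every field $\mathbb K$'' does not imply ``type $\mathtt{FP}_n$'', so the argument cannot be run simultaneously over all fields.
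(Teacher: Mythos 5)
Your proof is correct and broadly follows the structure of the paper's argument, but the forward implication is executed by a genuinely different mechanism, and it's worth noting the differences.

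For (2)$\Rightarrow$(1) your argument agrees with the paper's one-line appeal to \cref{HennekeKielak}; your additional remark about multiplicativity under the finite-index passage is routine and fine.

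For (1)$\Rightarrow$(2) the paper does not invoke \cref{Fisher} as a black box, but instead re-runs the RFRS Novikov argument to get vanishing of Novikov \emph{homology} and, crucially, converts it to vanishing of Novikov \emph{cohomology} via \cref{Novikov cohom}. It then uses the long exact sequence \eqref{les}, which comes from the coefficient short exact sequence $0 \to \mathbb K G_1 \to \widehat{\mathbb K G_1}^{\phi} \oplus \widehat{\mathbb K G_1}^{-\phi} \to \prod_{j\in\mathbb Z} (\mathbb K\ker\phi)\,t^j \to 0$; the vanishing of the Novikov cohomology terms collapses it to the isomorphism $\mathrm{H}^{i+1}(G_1;\mathbb K G_1) \cong \mathrm{H}^i(\ker\phi;\mathbb K\ker\phi)$ via Shapiro's lemma applied to the coinduced module. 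You instead take \cref{Fisher} as given (which is certainly legitimate -- it packages exactly the same RFRS/Novikov work) and then run the Wang/Lyndon--Hochschild--Serre spectral sequence for $1 \to K \to G_1 \to \mathbb Z \to 1$ with coefficients in $\mathbb K G_1$. This is a different exact sequence from \eqref{les}, and it cleverly sidesteps the need for the Novikov cohomology vanishing entirely: the $\mathbb Z$-invariants of $\mathrm{H}^m(K;\bigoplus_i \mathbb K K\,t^i)$ vanish for the purely formal reason that the sum is direct (finite supports), and the coinvariants collapse to a single copy of $\mathrm{H}^m(K;\mathbb K K)$. Both routes land on the same isomorphism. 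Your use of Strebel's theorem to get $\mathrm{cd}_{\mathbb K}(K) \leqslant n-1$ is overkill and introduces a needless dependency: the trivial subgroup bound $\mathrm{cd}_{\mathbb K}(K) \leqslant \mathrm{cd}_{\mathbb K}(G_1) = n$ together with $\mathtt{FP}_n(\mathbb K)$ already gives $\mathtt{FP}(\mathbb K)$ by the same syzygy argument (this is the route the paper takes, citing Brown VIII.6.1). On balance your version is slightly more modular and uses only classical spectral-sequence machinery for the duality descent, at the price of invoking \cref{Fisher} wholesale, whereas the paper's version is more self-contained and stays uniformly within the Novikov-ring framework used throughout the survey. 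One small point deserving a sentence in a polished write-up: the isomorphism produced by the Wang sequence needs to be seen to respect the right $K$-module structure in order to conclude orientability of $\ker\phi$, not just $\mathrm{PD}^{n-1}(\mathbb K)$-ness; the paper glosses this too, but it's where the orientability hypothesis actually gets used.
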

\begin{proof}[Sketch proof]
	The implication 2.\ $\Rightarrow$ 1.\ follows from \cref{HennekeKielak}.
	
Suppose that 1.\ holds. By the usual argument and \cref{Novikov cohom}, we obtain a finite-index subgroup $G_1$ and an epimorphism $\phi \colon G_1 \to \mathbb{Z}$ such that
\[
\mathrm{H}_i(G_1; \widehat{ \mathbb K G_1}^{\pm \phi}) = \mathrm{H}^i(G_1; \widehat{ \mathbb K G_1}^{\pm \phi})=0
\]
for all $i$. By \cref{Sikorav}, the first vanishing tells us that $\ker \phi$ is of type $\mathtt{FP}(\mathbb K)$ (there is a little argument needed here, using the bound $\mathrm{cd}_{\mathbb K} (\ker \phi) \leqslant n$, see Proposition VIII.6.1 in \cite{Brown1982}).

Recall the long exact sequence \eqref{les}:
\begin{equation*}
\begin{adjustbox}{valign=c,scale=0.67}
	$ \cdots \to \mathrm{H}^{i}(G_1;\mathbb K G_1) \to \mathrm{H}^i(G_1;\widehat{\mathbb K G_1}^{\phi}) \oplus \mathrm{H}^i(G_1;\widehat{\mathbb K G_1}^{-\phi})  \\ \to \mathrm{H}^i(G_1;\prod_{j \in \mathbb Z} (\mathbb K \ker\phi) t^j) \to \mathrm{H}^{i+1}(G_1;\mathbb K G_1) \to \cdots . $
\end{adjustbox}
\end{equation*}
The Novikov cohomology terms vanish, and so \[\mathrm{H}^{i+1}(G_1;\mathbb K G_1) \cong \mathrm{H}^i(G_1;\prod_{j \in \mathbb Z} (\mathbb K \ker\phi) t^j)\] for all $i$. But the latter cohomology, by Shapiro's lemma, is precisely $\mathrm{H}^i(\ker \phi; \mathbb K \ker\phi)$, and therefore if $G_1$ is $\mathrm{PD}^n(\mathbb K)$ then $\ker \phi$ is $\mathrm{PD}^{n-1}(\mathbb K)$. The fact that $G_1$, being finite-index in $G$, is $\mathrm{PD}^n(\mathbb K)$ is easy to establish: finiteness properties pass to finite-index subgroups, and the cohomology with group ring coefficients of $G_1$ is equal to the corresponding cohomology for $G$ by Shapiro's lemma.
\end{proof}

It was known before that for kernels of epimorphisms to $\mathbb Z$ from Poincar\'e-duality groups it is enough to establish sufficiently good finiteness properties to conclude that the kernel is a Poincar\'e-duality group itself, see for example the work of Hillman--Kochloukova \cite{HillmanKochloukova2007}. The proof given above is however new and very straightforward.

The argument combined with \cref{PD2} gives a quick proof of \cref{Stallings}: we start with a $3$-manifold $M$ with fundamental group $G$ and an epimorphism $\phi \colon G \to \mathbb Z$ with finitely generated kernel. A little yoga allows us to focus on the case of $M$ being aspherical. From \cref{BNS,Sikorav} we get vanishing of Novikov homology over $\mathbb Z$ in dimension $1$; vanishing in dimension $0$ holds for all Novikov rings, as long as $\phi$ is not trivial. \Cref{Novikov cohom} gives us vanishing of Novikov cohomology in dimensions $0$ and $1$, and now Poincar\'e duality gives vanishing of all Novikov homology. Arguing as above, we show that $\ker \phi$ is a $\mathrm{PD}^2$-group, and so a surface group by \cref{PD2}. Every automorphism of a surface group can be realised by a mapping class (this is the Dehn--Nielsen--Baer theorem), and we are done.

\subsection{Hyperbolic manifolds}

We have made our hopes regarding Poin\-car\'e-duality groups explicit in \cref{PD conj}. We have similar expectations in the realm of manifolds. 

\begin{conj}
	\label{manifold conj}
	Let $M$ be a closed aspherical manifold whose fundamental group $G$ is RFRS. The following are equivalent:
	\begin{enumerate}
		\item $\beta_i^{(2)}(G; \mathbb K) = 0$ for all $i$ and all fields $\mathbb K$;
		\item $M$ is virtually fibred.
	\end{enumerate}
\end{conj}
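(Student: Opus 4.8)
The plan is to treat the two implications separately, reducing the substantive one to \cref{PD thm}, its conjectural integral refinement \cref{PD conj}, and Farrell's fibring criterion \cref{Farrell aspherical}, and handling low dimensions by hand. Consider first the implication from virtual fibring to the vanishing of all $\beta_i^{(2)}(G;\mathbb K)$. If $M$ virtually fibres then a finite cover $M_1 \to M$ is a mapping torus of a self-homeomorphism of a closed aspherical manifold $F$, so $G_1 := \pi_1(M_1) \cong \pi_1(F) \rtimes \mathbb Z$ with $\pi_1(F)$ of type $\mathtt F$, hence of type $\mathtt{FP}_i(\mathbb K)$ for every $i$ and every field $\mathbb K$. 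The Generalised Mapping Torus Theorem \cref{HennekeKielak} gives $\beta_i^{(2)}(G_1;\mathbb K) = 0$ for all $i$ and all $\mathbb K$, and multiplicativity of $L^2$-Betti numbers under finite-index inclusions -- valid over every field because the Linnell property makes $\mathcal D_{\mathbb K G}$ free of rank $|G:G_1|$ over $\mathcal D_{\mathbb K G_1}$ -- yields $|G:G_1|\,\beta_i^{(2)}(G;\mathbb K) = \beta_i^{(2)}(G_1;\mathbb K) = 0$, so $\beta_i^{(2)}(G;\mathbb K)=0$ for all $i$ and all $\mathbb K$.

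For the converse, observe that $G$ is a $\mathrm{PD}^n$-group of type $\mathtt F$ which is RFRS, and that the vanishing hypothesis is precisely condition (1) of \cref{PD thm}. Granting \cref{PD conj} -- of which \cref{PD thm} is the field-by-field shadow -- we obtain a finite-index subgroup $G_1 \leqslant G$ and an epimorphism $\phi \colon G_1 \to \mathbb Z$ with $K := \ker\phi$ a $\mathrm{PD}^{n-1}$-group, and with $\mathrm{cd}_{\mathbb Z}(K) = n-1$ by \cref{drop}. Let $M_1 \to M$ be the cover with $\pi_1(M_1) = G_1$, which is again closed aspherical. If $\dim M \geqslant 6$ I would apply \cref{Farrell aspherical} to $(M_1,\phi)$: its hypotheses are $\mathrm{Wh}(G_1)=0$ -- which holds whenever $G_1$ satisfies the Farrell--Jones conjecture, and is expected always -- and $K$ of type $\mathtt F$. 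The latter reduces to finite presentability of $K$: being $\mathrm{PD}^{n-1}$ it is already of type $\mathtt{FP}(\mathbb Z)$, so finite presentability together with $\mathrm{cd}_{\mathbb Z}(K) = n-1 \geqslant 5$ gives type $\mathtt F$ by standard Eilenberg--Ganea and Brown finiteness results. Farrell's theorem then produces a fibration $M_1 \to \mathbb S^1$, so $M$ virtually fibres. The case $\dim M = 3$ is already \cref{LottLueck}; the cases $\dim M \in \{4,5\}$ fall outside the range of Farrell's machinery and would require a separate attack -- through surgery and controlled topology, or a fibred-face argument for a suitable seminorm -- which I expect to be genuinely hard, as it already is in the classical high-dimensional theory.

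The principal obstacle is the conjunction of two gaps. First, upgrading \cref{PD thm} to \cref{PD conj}: the Novikov machinery delivers a kernel that is $\mathrm{PD}^{n-1}(\mathbb K)$ over \emph{every} field, but, as the Bieri--Strebel example (and Kropholler's forthcoming one) shows, this does not formally imply $\mathrm{PD}^{n-1}(\mathbb Z)$, and resolving it will almost certainly require exploiting the ascending Poincar\'e duality of $G_1$ itself rather than arguing field-by-field. Second, and more fundamentally, the $L^2$- and Novikov theory is inherently \emph{homological} -- all it detects is $\mathtt{FP}$-type information -- whereas Farrell's criterion (through Wall's finiteness obstruction and the finite domination of the infinite cyclic cover of $M_1$) demands that $K$ be \emph{finitely presented}. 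Bridging this homological--homotopical gap, i.e.\ showing that the fibre group of an algebraic fibring of a \emph{closed aspherical manifold} group is automatically of type $\mathtt F$, even though general $\mathrm{PD}$-groups need not be finitely presented (Davis--Leary), is where the real difficulty lies; it may not be achievable within pure group theory, and could force one to bring in the ambient smooth manifold.
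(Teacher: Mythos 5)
The statement you are addressing is one of the paper's open conjectures, not a theorem; the paper offers no proof, only a discussion of what a proof would need. Your ``proof proposal'' is therefore rightly a reduction strategy with the genuine gaps flagged, and in that it matches the paper's own remarks exactly: the paper says that, modulo \cref{Farrell} and vanishing of the Whitehead group, the key difference between \cref{PD conj} and \cref{manifold conj} is precisely finite presentability of $\ker\phi$, which is not a homological property and ``will need new ideas''. You identify the same two obstacles (upgrading \cref{PD thm} to \cref{PD conj}, and the homological-versus-homotopical gap), and you additionally spell out the dimension-$3$ case (which is indeed \cref{LottLueck}) and note that dimensions $4$ and $5$ lie outside Farrell's range. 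Your argument for the direction ``virtually fibred $\Rightarrow$ vanishing'' is correct and complete: a finite cover $M_1$ is a mapping torus with aspherical closed fibre, so $\pi_1(M_1) \cong \pi_1(F)\rtimes\mathbb Z$ with $\pi_1(F)$ of type $\mathtt F$, \cref{HennekeKielak} kills all $\beta_i^{(2)}(G_1;\mathbb K)$, and multiplicativity under the finite-index inclusion $G_1 \leqslant G$ passes the vanishing to $G$.

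Two small quibbles. First, once you grant \cref{PD conj} and have a $\mathrm{PD}^{n-1}$-kernel $K$, you already have $\mathrm{cd}_{\mathbb Z}(K)=n-1$ from the definition of a Poincar\'e-duality group; invoking \cref{drop} there is redundant. Second, to pass from ``$K$ is $\mathtt{FP}(\mathbb Z)$ and finitely presented'' to ``$K$ is of type $\mathtt F$'' you want Wall's finiteness theorem (type $\mathtt F$ is equivalent to type $\mathtt{FP}$ plus finite presentability), not Eilenberg--Ganea, which is about realising $\mathrm{cd}$ by the dimension of a classifying space and is a separate point; and no lower bound on $n$ is needed for this step. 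Neither quibble affects the substance. The upshot is that your treatment is consistent with the paper's: the easy direction is genuinely proved, the hard direction is correctly reduced, and the irreducibly open parts are honestly labelled as such.
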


In view of \cref{Farrell}, for high-dimensional aspherical  manifolds with vanishing Whitehead group, the key difference between \cref{PD conj} and \cref{manifold conj} lies in finite presentability of $\ker \phi$. This is not a homological property, and establishing it will need new ideas. Perhaps another invariant will need to be added to the list of known obstructions.

For specific examples, finite presentability of $\ker \phi$ can be established using the Jankiewicz--Norin--Wise combinatorial game \cite{Jankiewiczetal2017}. This method applies to right-angled Coxeter groups and has been used in a crucial way in the following.

\begin{thm}[Italiano--Martelli--Migliorini \cite{Italianoetal2023}]
	\label{IMM}
	There exists a $5$-dimensional non-compact hyperbolic manifold of finite volume that fibres.
\end{thm}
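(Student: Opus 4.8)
The plan is to build the fibration by hand from a right-angled reflection group, since in dimension $5$ the surgery-theoretic criteria of \cref{Farrell} and \cref{Farrell aspherical} are unavailable: they require $\dim M \geqslant 6$ (so that the fibre has dimension at least $5$, making the $s$-cobordism theorem available), and moreover they are stated for compact manifolds, whereas a cusped hyperbolic manifold is open. It is worth recording that $5$ is the smallest dimension above $3$ in which a fibring finite-volume hyperbolic manifold can exist: in even dimensions such a manifold has non-zero Euler characteristic by Chern--Gauss--Bonnet, which obstructs any fibration over the circle.

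First I would fix a finite-volume right-angled polytope $P \subset \mathbb{H}^5$ (such polytopes exist, necessarily with some ideal vertices) and let $W = W_P$ be the right-angled Coxeter group generated by the reflections in the facets of $P$. By Selberg's lemma $W$ has torsion-free finite-index subgroups, and the corresponding quotients of $\mathbb{H}^5$ are finite-volume cusped hyperbolic $5$-manifolds tessellated by copies of $P$. Next I would equip such a manifold $M$ with a \emph{colouring}, that is, combinatorial data on the copies of $P$ encoding a cellular map $f \colon M \to \mathbb{S}^1 = \mathbb{R}/\mathbb{Z}$ that is affine on each copy of $P$ and realises a chosen epimorphism $\phi = f_\ast \colon \pi_1(M) \to \mathbb{Z}$. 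Then $f$ is a circle-valued Bestvina--Brady Morse function, and the topology of its ascending and descending links is governed entirely by $P$ together with the colouring data on its facets.

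The key step is to choose $P$ and the colouring so that the Jankiewicz--Norin--Wise combinatorial game \cite{Jankiewiczetal2017} can be won for $\phi$, and in fact in a sharpened form: it should certify not just that the ascending and descending links are simply connected, but that each of them is a $\mathrm{PL}$ $3$-sphere (the codimension-two situation inside the $4$-sphere vertex links of the $5$-manifold $M$). Simple connectivity alone already gives that $\ker \phi$ is finitely presented; the sphere condition is what upgrades this to a genuine fibration in this low dimension. Indeed, once every ascending and descending link is a $3$-sphere, standard Bestvina--Brady theory shows that each level set $f^{-1}(\mathrm{pt})$ is a properly embedded $4$-submanifold of $M$ and that $f$ is a submersion with no critical points; near every cusp $f$ moreover restricts to a fibration of the flat $4$-dimensional cusp cross-section over $\mathbb{S}^1$, with flat $3$-manifold fibre. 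Trivialising $f$ near the cusps by means of this product structure on the ends and applying Ehresmann's theorem on the resulting compact core, one concludes that $f \colon M \to \mathbb{S}^1$ is a locally trivial fibre bundle, so that $M$ fibres, the fibre being an open $4$-manifold.

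The main obstacle is pure existence: one must actually exhibit a polytope $P$ and a colouring for which the game succeeds and all the links come out as $3$-spheres. Each candidate reduces to a finite check on finite simplicial complexes, so the search is in principle algorithmic, but the supply of right-angled polytopes in $\mathbb{H}^5$ is small, and the overwhelming majority of colourings fail -- either the game jams, or some link is not a sphere (which also destroys the manifold structure of the fibre). Finding the right pair $(P,\text{colouring})$ and carrying out the attendant finite verification is the entire substance of the theorem, and is precisely what the construction of Italiano--Martelli--Migliorini accomplishes.
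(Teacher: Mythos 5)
The paper gives no proof of this theorem — it is cited from Italiano--Martelli--Migliorini, with only a one-sentence remark that the construction rests on the Jankiewicz--Norin--Wise combinatorial game applied to right-angled Coxeter groups. Your proposal correctly reconstructs the overall architecture: a finite-volume right-angled polytope in $\mathbb{H}^5$, the associated right-angled Coxeter group with a torsion-free finite-index subgroup, a coloured state structure producing an affine circle-valued map, Bestvina--Brady Morse theory, and the JNW game (suitably strengthened) as the verification engine. Your opening explanation of why Farrell's theorem cannot be used in dimension $5$ is also correct and worth stating.

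The substantive error is the combinatorial condition you impose on the ascending and descending links. You claim they should each be a PL $3$-sphere, and that this is ``what upgrades finite presentability to a genuine fibration''; this is backwards. In PL Morse theory on a $5$-manifold, a vertex whose descending link is a $3$-sphere inside the $4$-sphere vertex link is a \emph{critical point} — it has index $4$, in exact analogy with the smooth descending sphere $S^{k-1}$ for a critical point of index $k$ — which is precisely what would obstruct the map from being a fibration. A regular vertex is one whose ascending and descending links are both \emph{collapsible}. Simple connectivity of all ascending/descending links gives $\ker\phi$ of type $\mathtt{F}_2$; contractibility gives type $\mathtt{F}$; and collapsibility is the PL strengthening that allows one to cancel the vertex entirely, giving a PL product structure and hence that the level sets are all PL-homeomorphic and $f$ is a fibre bundle. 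It is collapsibility of every ascending and descending link — not a sphere condition — that the Italiano--Martelli--Migliorini adaptation of the JNW game certifies, and that the ``existence search'' in the final part of your sketch must target. (As a side remark, a $3$-sphere in a $4$-sphere is codimension one, not two.)
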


By a \emph{hyperbolic manifold} we will mean a manifold obtained by taking a quotient of the real-hyperbolic space by a group of isometries that acts freely and properly discontinuously.

Together with the fact that all finite-volume hyperbolic manifolds in dimension $3$  virtually fibre, this is evidence towards the following audacious statement.

\begin{conj}
	\label{audacious conj}
	All finite-volume hyperbolic manifolds in odd dimensions are virtually fibred.
\end{conj}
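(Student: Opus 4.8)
Let us indicate what a proof of \cref{audacious conj} might look like, and where the genuine difficulties would lie. The strategy is to funnel everything through the RFRS machinery behind \cref{Fisher} and \cref{PD thm}, combined with the $L^2$-vanishing of \cref{Dodziuk}. Let $M$ be a finite-volume hyperbolic $n$-manifold with $n$ odd, and set $G = \pi_1(M)$. It would be enough to establish three things: \textbf{(A)} $G$ is virtually RFRS; \textbf{(B)} $\beta_i^{(2)}(G;\mathbb K) = 0$ for all $i$ and all fields $\mathbb K$, which is exactly \cref{Dodziuk conjecture}; and \textbf{(C)} the resulting virtual \emph{algebraic} fibring can be upgraded to a genuine fibration. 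Indeed, granting \textbf{(A)} we may replace $G$ by a finite-index RFRS subgroup and $M$ by the corresponding cover; granting \textbf{(B)}, the $\mathrm{PD}^n$-pair analogue of \cref{PD thm} (in the closed case, \cref{PD thm} itself) yields a further finite-index subgroup $G_1 \leqslant G$ and an epimorphism $\phi \colon G_1 \to \mathbb Z$ whose kernel is an orientable $\mathrm{PD}^{n-1}(\mathbb K)$-group over every field $\mathbb K$; and \textbf{(C)} then has to turn this into an honest fibration --- via \cref{Farrell} when $n \geqslant 6$ (after a further cover the cusps become tori, so one only needs $\phi$ non-trivial on each cusp subgroup), by techniques in the spirit of \cite{Italianoetal2023} when $n = 5$, the case $n = 3$ being already known. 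Note that \cref{RFRS conj} is precisely the special case of \cref{audacious conj} in which \textbf{(A)} is assumed, so it would follow from \textbf{(B)} and \textbf{(C)} alone.

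First I would attack \textbf{(A)}, which asks for a higher-dimensional Agol--Wise theorem: every finite-volume hyperbolic $n$-manifold should be virtually compact special in the sense of Haglund--Wise. The plan is the familiar cubulation pipeline --- build a $G$-invariant family of codimension-one ``walls'' out of $\pi_1$-injective hypersurface subgroups (for which surface- and hypersurface-subgroup results of Kahn--Markovic and Hamenst\"adt provide a starting supply), run Sageev's construction to obtain an action on a CAT(0) cube complex, verify cocompactness via the Bergeron--Wise criterion, and establish the separability hypotheses that make the quotient special. For standard arithmetic hyperbolic manifolds of simple type this is a theorem of Bergeron--Haglund--Wise, so the new content concerns the non-arithmetic examples and the remaining arithmetic types; producing enough walls there, and in particular enough \emph{separable} ones, is the first serious obstacle.

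Next comes \textbf{(B)}. Over $\mathbb Q$ it is \cref{Dodziuk}, so only positive characteristic is at issue. Here I would invoke the mod-$p$ analogue of L\"uck approximation (due to Jaikin-Zapirain): along any RFRS chain $G_1 \geqslant G_2 \geqslant \cdots$ with trivial intersection,
\[
\beta_k^{(2)}(G_1;\mathbb F_p) \;=\; \lim_{i \to \infty} \frac{\dim_{\mathbb F_p} \mathrm H_k(G_i;\mathbb F_p)}{[G_1 : G_i]},
\]
so that \textbf{(B)} amounts to sublinear growth of mod-$p$ Betti numbers in finite covers. In odd dimensions one expects the integral torsion homology of finite covers to grow subexponentially --- in sharp contrast with dimension $3$ --- and in the arithmetic case one would try to extract the required bound from the Cheeger--M\"uller theorem and the analysis of analytic torsion by Bergeron--Venkatesh. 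A more algebraic route is also available: $G_1$ is an orientable $\mathrm{PD}^n(\mathbb F_p)$-group, hence by duality
\[
\beta_k^{(2)}(G_1;\mathbb F_p) = \beta_{n-k}^{(2)}(G_1;\mathbb F_p),
\]
and, since $n$ is odd, it would suffice to prove the vanishing for $k < n/2$. Obtaining this low-degree vanishing in characteristic $p$ --- ruling out ``extra'' mod-$p$ homology growth invisible over $\mathbb Q$ --- is the second obstacle, and it is the heart of \cref{Dodziuk conjecture}.

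The hard part, though, is \textbf{(C)}. By \cref{Farrell}, turning the homological fibring $\phi$ into a fibration of a finite cover requires $\ker \phi$ to be \emph{finitely presented} (and to have vanishing Whitehead group, which is less of a worry: it follows from the Farrell--Jones conjecture, known for $G$, and one expects it to be inherited by $\ker\phi$ once the latter is recognised as a $\mathrm{PD}^{n-1}$-group). As stressed after \cref{manifold conj}, finite presentability is not a homological property, and the counterexamples built from Abels's groups, and the forthcoming ones of Kropholler, show that one cannot hope for it automatically. The plan would be to choose $\phi$ \emph{geometrically}: run a higher-dimensional version of the Jankiewicz--Norin--Wise combinatorial game \cite{Jankiewiczetal2017} on the virtually special model produced in step \textbf{(A)}, so that $\phi$ is dual to a $\pi_1$-injective hypersurface and $\ker\phi$ is manifestly the fundamental group of a compact manifold. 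Failing that, one would want an abstract statement to the effect that a RFRS $\mathrm{PD}^n$-group which algebraically fibres with $\mathrm{PD}^{n-1}$ kernel already virtually fibres with a \emph{finitely presented} kernel --- but proving this would almost certainly require a genuinely new finiteness obstruction, beyond $L^2$-homology, of the kind speculated about after \cref{manifold conj}. I expect step \textbf{(C)} to be where the real difficulty sits.
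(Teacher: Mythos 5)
\Cref{audacious conj} is an open conjecture --- the paper contains no proof of it and claims none. What the paper does offer is a three-step strategy for the weaker \Cref{RFRS conj}, and your proposal tracks that strategy closely, prepending a step (your \textbf{(A)}: virtual RFRS, i.e.\ a higher-dimensional Agol--Wise theorem) which is exactly the ingredient the paper identifies as separating \Cref{RFRS conj} from \Cref{audacious conj}. Your \textbf{(B)} is the paper's Step 1 (\Cref{Dodziuk conjecture}), and the tools you suggest there (mod-$p$ L\"uck approximation, Poincar\'e duality to halve the range of degrees, torsion-homology growth estimates) are reasonable and go beyond what the paper sketches.

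The place where your outline drifts is \textbf{(C)}, which silently merges the paper's Steps 2 and 3. \Cref{PD thm} hands you only a kernel that is $\mathrm{PD}^{n-1}(\mathbb K)$ over every field $\mathbb K$; the paper's Step 2 is to upgrade this to $\mathrm{PD}^{n-1}$ over $\mathbb Z$ (this is \Cref{PD conj}), and \emph{that} is the issue illuminated by the Abels/Bieri--Strebel and Kropholler counterexamples --- they show that type $\mathtt{FP}_n(\mathbb K)$ over all fields does not imply type $\mathtt{FP}_n$ over $\mathbb Z$. You instead invoke those counterexamples as witnesses against automatic finite presentability, which is the paper's distinct Step 3. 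Relatedly, \Cref{Farrell aspherical} requires $\ker\phi$ to be of type $\mathtt{F}$, not merely finitely presented; once Step 2 delivers type $\mathtt{FP}$ over $\mathbb Z$, finite presentation does upgrade to type $\mathtt{F}$ by a theorem of Wall, but that chain of deductions needs to be spelled out rather than compressed into ``finitely presented.'' Your concrete suggestions for \textbf{(C)} --- a higher-dimensional Jankiewicz--Norin--Wise game on a virtually special model, or a genuinely new finiteness obstruction --- match the paper's remarks, and your assessment that this is the bottleneck agrees with the paper's.
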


As discussed before, the restriction to odd dimensions comes from the Euler characteristic. The other type of obstruction, $L^2$-homology, does not add anything in this context.

\begin{thm}[Dodziuk \cite{Dodziuk1979}]
	\label{Dodziuk}
	Every odd-dimensional closed hyperbolic manifold is $L^2$-acyclic.
\end{thm}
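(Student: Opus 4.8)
The plan is to reduce the statement to a computation of $L^2$-harmonic forms on hyperbolic space and then carry out (or quote) the analysis of that computation. Write $\Gamma=\pi_1(M)$ and $\widetilde M=\mathbb H^n$ for the universal cover, on which $\Gamma$ acts freely, cocompactly and by isometries. First I would invoke the $L^2$-Hodge--de Rham theorem (see \cite{Lueck2002}): the reduced $L^2$-homology $\mathrm H_i^{(2)}(\widetilde M)$ is isomorphic, as a Hilbert $\Gamma$-module, to the space $\mathcal H^i_{(2)}(\mathbb H^n)$ of square-integrable harmonic $i$-forms on $\mathbb H^n$, and in particular the two have equal von Neumann dimension. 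Hence $\beta_i^{(2)}(M)=0$ --- indeed $\mathrm H_i^{(2)}(\widetilde M)=0$ --- as soon as $\mathcal H^i_{(2)}(\mathbb H^n)$ vanishes as a vector space. So everything comes down to the analytic assertion that, \emph{for $n$ odd, $\mathbb H^n$ carries no non-zero $L^2$ harmonic form in any degree}.

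Next I would dispose of all but two degrees by a spectral-gap argument. Let $\Delta_p$ be the Hodge Laplacian on $L^2$ $p$-forms on $\mathbb H^n$. Separating variables in geodesic polar coordinates, in which $\mathbb H^n$ is the warped product $\bigl((0,\infty)\times S^{n-1},\, dr^2+\sinh^2 r\, g_{S^{n-1}}\bigr)$, one finds $\inf\mathrm{spec}(\Delta_p)>0$ for every integer $p$ with $p<\tfrac{n-1}{2}$ or $p>\tfrac{n+1}{2}$: for instance $\inf\mathrm{spec}(\Delta_0)=(n-1)^2/4$, and in general the bottom of the spectrum equals $\bigl(\tfrac{n-1}{2}-p\bigr)^2$ for $p\leqslant\tfrac{n-1}{2}$, extended to $p\geqslant\tfrac{n+1}{2}$ by the Hodge-star identification $\Delta_p\cong\Delta_{n-p}$. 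A non-zero $L^2$ harmonic $p$-form lies in $\ker\Delta_p$, which is impossible once $0$ is strictly below the spectrum, so $\mathcal H^p_{(2)}(\mathbb H^n)=0$ for all such $p$. Because $n$ is odd, the only integers not covered are $p=\tfrac{n-1}{2}$ and $p=\tfrac{n+1}{2}$ (for even $n$ the single unresolved degree would be the middle one $n/2$, where $L^2$-cohomology genuinely appears --- which is exactly why the result is special to odd dimensions).

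The hard part is these two endpoint degrees, where $0$ sits at the bottom of the continuous spectrum of $\Delta_p$ and the gap argument is silent: one must show that $0$ is not an $L^2$-eigenvalue. Here I would follow Dodziuk \cite{Dodziuk1979}: an $L^2$ harmonic form on a complete manifold is closed and coclosed, and writing the equations $d\omega=0=\delta\omega$ in the warped-product coordinates above reduces, after decomposing along the eigenforms of the Laplacian on $S^{n-1}$, to explicit systems of ordinary differential equations in $r$. The point is then that a solution which is square-integrable as $r\to\infty$ fails to be square-integrable as $r\to 0$ (or vice versa), and the way the two asymptotic regimes interact is controlled by the parity of $n$ through the absence of a middle degree. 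I expect this ODE analysis to be the one genuinely analytic step; a more conceptual but heavier alternative is harmonic analysis on $G=\mathrm{SO}(n,1)$, where the $L^2$-cohomology of $\mathbb H^n=G/\mathrm{SO}(n)$ in degree $p$ is governed by discrete-series representations of $G$, and $G$ admits discrete series only when $n$ is even.

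Assembling the pieces: $\mathcal H^i_{(2)}(\mathbb H^n)=0$ for every $i$ when $n$ is odd, hence $\mathrm H_\ast^{(2)}(\widetilde M)=0$, i.e.\ $M$ is $L^2$-acyclic. (If one prefers a volume-normalised formulation, the proportionality principle packages the whole reduction as $\beta_i^{(2)}(M)=\mathrm{vol}(M)\cdot b_i$ with $b_i$ depending only on $n$ and $i$, after which one needs only $b_i=0$ for all $i$ when $n$ is odd.)
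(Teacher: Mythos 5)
The paper cites Dodziuk's theorem without giving a proof, so there is no in-paper argument to compare against; your sketch must be judged against Dodziuk's original paper. Your outline is a correct and faithful account of the standard proof: reduce via the $L^2$-Hodge--de Rham isomorphism to $L^2$-harmonic forms on $\mathbb{H}^n$, dispose of all degrees away from the middle by the spectral gap $\inf\mathrm{spec}(\Delta_p)=\bigl(\tfrac{n-1}{2}-p\bigr)^2>0$, and handle the two endpoint degrees $p=(n\pm1)/2$ by the ODE analysis in geodesic polar coordinates (Dodziuk's argument applies uniformly to rotationally symmetric metrics and in fact treats all degrees by this separation of variables, with the gap argument a convenient simplification). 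One small imprecision: near $r=0$ the relevant constraint is smoothness (regularity at the pole of polar coordinates) rather than square-integrability, which is automatic locally; the incompatibility Dodziuk exploits is between regularity at $r=0$ and $L^2$-decay as $r\to\infty$. Your two side remarks --- the representation-theoretic alternative (discrete series of $\mathrm{SO}(n,1)$ exist iff $n$ is even) and the proportionality-principle reformulation --- are both correct.
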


This can be extended to all finite-volume hyperbolic manifolds using measure equivalence and the work of Gaboriau \cite{Gaboriau2002}. What happens over fields other than $\mathbb Q$?

\begin{conj}
	\label{Dodziuk conjecture}
	If $G$ is the fundamental group of a finite-volume hyperbolic odd-dimensional manifold with RFRS fundamental group $G$, then $\beta_i^{(2)}(G;\mathbb K) = 0$ for every $i$ and every field $\mathbb K$.
\end{conj}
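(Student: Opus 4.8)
One natural line of attack on \cref{Dodziuk conjecture} runs as follows.

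\emph{Step 1: reduce to positive characteristic.} Since a finite-volume hyperbolic manifold is homotopy equivalent to a compact one, $G$ is of type $\mathtt F$, so for any field $\mathbb K$ the homology $\mathrm H_i(G;\mathcal D_{\mathbb K G})$ is computed from a finite free resolution and hence only involves a finitely generated subfield $\mathbb K_0\subseteq\mathbb K$. If $\mathbb K$ has characteristic $0$ then $\mathbb K_0$ has finite transcendence degree over $\mathbb Q$ and embeds into $\mathbb C$; the base-change properties of the assignment $\mathbb K\mapsto\mathcal D_{\mathbb K G}$ then give $\beta_i^{(2)}(G;\mathbb K) = \beta_i^{(2)}(G;\mathbb C) = \beta_i^{(2)}(G)$, which vanishes by \cref{Dodziuk} in the closed case and by the Gaboriau extension in the cusped case. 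It therefore remains to fix a prime $p$ and prove $\beta_i^{(2)}(G;\mathbb F_p)=0$ for all $i$.

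\emph{Step 2: turn this into a homology-growth statement.} Being RFRS, $G$ is residually finite; pick a residual chain $G=G_0\geqslant G_1\geqslant\cdots$ of finite-index normal subgroups with $\bigcap_n G_n=1$. Using an $\mathbb F_p$-approximation theorem for RFRS groups (in the spirit of Jaikin-Zapirain's base-change work, applied to this chain) one would identify
\[
\beta_i^{(2)}(G;\mathbb F_p) = \lim_{n\to\infty}\frac{\dim_{\mathbb F_p}\mathrm H_i(G_n;\mathbb F_p)}{[G:G_n]}.
\]
By the universal coefficient theorem $\dim_{\mathbb F_p}\mathrm H_i(G_n;\mathbb F_p) = b_i(G_n) + \tau_i^p(G_n) + \tau_{i-1}^p(G_n)$, where $b_i$ is the rational Betti number and $\tau_j^p(G_n)$ is the number of $\mathbb Z/p^k$-summands in the torsion subgroup of $\mathrm H_j(G_n;\mathbb Z)$. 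The rational terms contribute $\beta_i^{(2)}(G;\mathbb Q)=0$ in the limit by Lück approximation, so the conjecture becomes the sublinear torsion-growth estimate
\[
\frac{\tau_i^p(G_n)}{[G:G_n]}\longrightarrow 0\qquad\text{for every }i.
\]

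\emph{Step 3 (the main obstacle): controlling mod-$p$ torsion.} This is where genuinely new input is needed, and I expect it to be hard already for $i=1$, where $\tau_1^p(G_n)/[G:G_n]\to 0$ says the mod-$p$ homology of the tower grows sublinearly --- known for hyperbolic $3$-manifold groups only because they are virtually fibred, which is exactly what is unavailable in higher dimensions. One should not expect to get away with the Benjamini--Schramm convergence of $G_n\backslash\mathbb H^{\dim M}$ to $\mathbb H^{\dim M}$: that controls $\log|\mathrm H_i^{\mathrm{tors}}(G_n;\mathbb Z)|$, which for odd-dimensional hyperbolic manifolds can grow linearly in the volume (the relevant $L^2$-torsion is nonzero), whereas a single summand $\mathbb Z/N$ with $N$ highly composite contributes $\log N$ there but at most $1$ to $\tau_i^p$. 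A genuinely mod-$p$ argument is required, and I see three candidate routes, in decreasing order of optimism: (a) a ``cheap rebuilding'' argument, in the sense of Abert--Bergeron--Fraczyk--Gaboriau, producing for each $n$ a CW model of $G_n\backslash\mathbb H^{\dim M}$ whose number of $i$-cells is $o([G:G_n])$, exploiting that the $\varepsilon$-thick part carries boundedly many cells per unit volume while Benjamini--Schramm convergence pushes most of the geometry into the thin part; (b) bootstrapping \cref{Fisher} over $\mathbb F_p$ degree by degree, extracting from a virtual $\mathtt{FP}_i(\mathbb F_p)$-fibration a vanishing in degree $i+1$ --- the sticking point being that \cref{HennekeKielak} upgrades $\mathtt{FP}_i(\mathbb F_p)$ of the fibre, not $\mathtt{FP}_{i+1}(\mathbb F_p)$, so the induction does not close on its own; (c) proving a Lück-type approximation theorem for $\mathbb F_p$-local systems on hyperbolic lattices together with vanishing of the associated twisted $L^2$-invariants. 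In all three the real crux is \emph{uniformity in $p$}: any estimate whose constants blow up as $p\to\infty$ says nothing about the clause ``for all fields $\mathbb K$'', so one must obtain bounds independent of $p$ or else handle all primes simultaneously. Note finally that this conjecture is the natural target because, once established, \cref{Fisher} yields a virtual algebraic fibration of $G$ over every field, which together with \cref{Lueck} is the $L^2$-theoretic half of \cref{RFRS conj}; the remaining, separately difficult, half is to promote the algebraic fibration to an honest fibration of the manifold.
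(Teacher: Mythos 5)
This statement is \cref{Dodziuk conjecture}, which the paper explicitly poses as an open conjecture and does not prove; there is therefore no proof of the paper's to compare against. You correctly treat it as open, and your outline correctly isolates the reduction to prime characteristic, the reformulation as a mod-$p$ homology-growth statement, and the control of $p$-torsion in the tower as the crux. Your explanation of why Benjamini--Schramm convergence and the nonvanishing of $L^2$-torsion do not settle the issue (a single large cyclic summand contributes heavily to $\log|\mathrm H_i^{\mathrm{tors}}|$ but only $1$ to $\tau_i^p$) is sound and matches the paper's remark that Step~1 of its own strategy for \cref{RFRS conj} is where a genuinely new idea is needed.

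Two points of detail, however, should be flagged. First, Step~2 as written presupposes a L\"uck-type approximation theorem in characteristic $p$, and you let the chain $G=G_0\geqslant G_1\geqslant\cdots$ be an \emph{arbitrary} residual chain of finite-index normal subgroups. In positive characteristic the normalised Betti numbers along a residual chain are not known to converge to a chain-independent quantity; the identification $\beta_i^{(2)}(G;\mathbb F_p)=\lim_n\dim_{\mathbb F_p}\mathrm H_i(G_n;\mathbb F_p)/[G:G_n]$ is itself a serious open problem in that generality. The machinery of Jaikin-Zapirain that the paper relies on is geared towards the chain witnessing the RFRS structure, and the paper's definition of $\beta_i^{(2)}(G;\mathbb K)$ via the skew-field $\mathcal D_{\mathbb K G}$ bypasses approximation entirely --- it is that homological definition, not a homology-growth limit, that feeds into \cref{Fisher}. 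If you prefer the approximation picture you must at minimum restrict to the RFRS chain and justify the comparison, which is an extra input rather than a formal step. Second, the closing remark that ``one must obtain bounds independent of $p$'' is not correct for the conjecture as stated: the clause ``for every field $\mathbb K$'' is just a universal quantifier, so it suffices to prove $\beta_i^{(2)}(G;\mathbb F_p)=0$ for each prime $p$ separately, with constants and rates of convergence that may deteriorate arbitrarily as $p\to\infty$. Uniformity in $p$ would become relevant only for effective or quantitative refinements, not for the bare vanishing statement that is conjectured.
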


In view of \cref{Dodziuk,Dodziuk conjecture},  \cref{audacious conj} is not obviously wrong. The evidence for it is rather scant: it holds in dimension $3$, there is one example (up to commensuration) in dimension $5$, and that is it. Topologically, a fibration is a special kind of a circle-valued Morse function -- one without any critical points. The difference between the number of critical points of even index and the number of those of odd index computes the Euler characteristic. One says that a circle-valued Morse function is \emph{perfect} if and only if the number of critical points is equal to the absolute value of the Euler characteristic of the manifold. Let us introduce a stronger condition -- we will call such a Morse function \emph{Singer} if and only if all the critical points have index equal to half the dimension of the manifold. So, for an odd-dimensional manifold, the existence of a Singer circle-valued Morse function is equivalent to fibring. (Two words on the terminology: it is not standard -- in fact, it appears here for the first time! It is motivated by the Singer conjecture that predicts that closed aspherical manifolds should have at most one non-trivial $L^2$-Betti number, exactly in the middle dimension.) With this notation, \cref{audacious conj} is a special case of the following statement: every finite-volume hyperbolic manifold should virtually admit a Singer circle-valued Morse function. And this statement has additional supporting evidence -- it holds in dimensions $2$ and $3$, and there are many examples of hyperbolic manifolds that admit such functions in dimensions $4$ and $6$ (both closed and cusped), see the constructions of Battista--Martelli \cite{BattistaMartelli2021} and Italiano--Migliorini \cite{ItalianoMigliorini2025}. Moreover, going away from hyperbolic manifolds, for arithmetic closed complex-hyperbolic manifolds with positive first (usual) Betti number the statement is true!  This was shown by Llosa Isenrich--Py, see Corollary 8.5 in \cite{Llosa-IsenrichPy2025}.

Here is a more believable special case of \cref{audacious conj}.

\begin{conj}
	\label{RFRS conj}
	All finite-volume hyperbolic manifolds in odd dimensions with RFRS fundamental groups are virtually fibred.
\end{conj}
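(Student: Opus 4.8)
The plan is to combine the homological engine behind \cref{PD thm} and \cref{Fisher} with Farrell's fibring criterion \cref{Farrell}, and to isolate finite presentability of the algebraic fibre as the decisive new ingredient. Since the RFRS property, hyperbolicity, and finiteness of volume all pass to finite covers, and since virtual fibring of a finite cover of $M$ is virtual fibring of $M$, we may freely replace $M$ by any finite cover at any stage. The case $\dim M = 3$ is \cref{Agol}, so let $n = \dim M$ be odd; I treat $n \geqslant 7$ in detail and return to $n=5$ at the end. After passing to a finite cover we may assume that $M$ is orientable and that every cusp cross-section is a torus; let $\overline M$ be the compact manifold with interior $M$, so $\partial \overline M$ is a disjoint union of $(n-1)$-tori (empty when $M$ is closed), and put $G = \pi_1(\overline M)$. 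When $M$ is closed, $G$ is an orientable $\mathrm{PD}^n$-group; when $M$ is cusped, $(\overline M,\partial\overline M)$ makes $G$ an orientable $\mathrm{PD}^n$-pair group. By \cref{Farrell} -- in the form valid for manifolds whose boundary fibres, which applies here because each boundary torus fibres and we will only need $\phi$ to be non-zero on each peripheral $\mathbb Z^{n-1}$ -- together with the vanishing $\mathrm{Wh}(G_1)=0$ for every finite-index $G_1\leqslant G$ (which holds for torsion-free groups satisfying the Farrell--Jones conjecture, in particular for hyperbolic lattices, so that the obstructions $c$ and $\tau$ become trivial), it is enough to produce a finite-index subgroup $G_1\leqslant G$ and an epimorphism $\phi\colon G_1\twoheadrightarrow\mathbb Z$ with $\ker\phi$ of type $\mathtt{F}$ and, in the cusped case, $\phi$ non-zero on every peripheral subgroup; the cover of $M$ corresponding to $G_1$ then fibres, and hence $M$ virtually fibres.

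The homological half proceeds exactly as in the proof of \cref{PD thm}. One first needs $\beta_i^{(2)}(G;\mathbb K)=0$ for all $i$ and all fields $\mathbb K$. In characteristic zero this is \cref{Dodziuk}, in its form extended to finite volume via Gaboriau's work; in positive characteristic it is precisely \cref{Dodziuk conjecture}, which must be established as an input. For arithmetic hyperbolic manifolds of simplest (or standard) type this looks reachable: such a manifold is virtually special by Bergeron--Haglund--Wise, so $G$ virtually embeds in a right-angled Artin group, and one can attempt to compute, or bound, the $L^2$-Betti numbers over $\mathbb K$ of the relevant right-angled Artin and Coxeter groups directly -- in the spirit of the known computations of their $L^2$-homology over arbitrary fields -- using that their reduced $L^2$-homology over $\mathbb Q$ is carried by the relevant flag complexes; in general this vanishing is part of the problem. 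Granting it, \cref{PD thm} in the closed case -- and a $\mathrm{PD}^n$-pair analogue of it in the cusped case, which the circle of ideas around \cref{Fisher} and \cref{Novikov cohom} should supply -- yields a finite-index $G_1\leqslant G$ and an epimorphism $\phi\colon G_1\twoheadrightarrow\mathbb Z$ whose kernel is an orientable $\mathrm{PD}^{n-1}(\mathbb K)$-group (respectively $\mathrm{PD}^{n-1}(\mathbb K)$-pair group) over every field $\mathbb K$. Choosing $\phi$ generically in $\mathrm H^1(G_1;\mathbb R)$, or passing one step deeper, we may also arrange that $\phi$ restricts non-trivially to each peripheral subgroup.

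The decisive obstacle is the upgrade from ``$\ker\phi$ is $\mathrm{PD}^{n-1}$ over every field'' to ``$\ker\phi$ is of type $\mathtt{F}$'': finite presentability is not a homological property, and the Novikov-ring and skew-field machinery underlying \cref{Fisher} and \cref{PD thm} cannot see it -- this is exactly the gap between \cref{PD thm} and \cref{PD conj}, and between \cref{PD conj} and \cref{manifold conj}. I see two routes. The first is special but unconditional: when $M$ is commensurable with a manifold built from a right-angled Coxeter group, in particular in the arithmetic-of-simplest-type case, one can bypass the homological route and run the Jankiewicz--Norin--Wise combinatorial game on a cover, as in the proof of \cref{IMM}, producing directly a finite cover that fibres with compact fibre, hence with fibre group of type $\mathtt{F}$. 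The second is general but conjectural: show that any group arising as $\ker\phi$ for an algebraic fibration of an RFRS $\mathrm{PD}^n$-group and which is $\mathrm{PD}^{n-1}$ over every field is automatically finitely presented -- ideally by first proving it is a $\mathrm{PD}^{n-1}(\mathbb Z)$-group and then invoking a recognition theorem in the spirit of \cref{PD2} (available only for $n-1\leqslant 2$ at present). Identifying the right additional invariant that detects finite presentability of the fibre -- the ``new idea'' alluded to after \cref{manifold conj} -- is where the real difficulty lies; once a type-$\mathtt{F}$ kernel is in hand, feeding it back into \cref{Farrell} with $c(\phi)=\tau(\phi)=0$ finishes the proof. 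Finally, the case $n=5$ lies outside the range of \cref{Farrell}, so even after the homological step one would still need a low-dimensional surgery input or a direct geometric construction of the fibration, exactly as in \cref{IMM}; I regard dimension $5$ as the single hardest case of \cref{RFRS conj}.
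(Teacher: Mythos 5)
This statement is a conjecture in the paper; the paper does not prove it but only sketches a three-step programme immediately after the statement, and your proposal is essentially that programme, honestly labelled as conditional rather than a proof. Step~1 of the paper is \cref{Dodziuk conjecture} (vanishing of $\beta_i^{(2)}(G;\mathbb K)$ in all characteristics), Step~2 is \cref{PD conj}, and Step~3 is finite presentability of the fibre group followed by \cref{Farrell aspherical} with $\mathrm{Wh}=0$ from the Farrell--Jones conjecture for (relatively) hyperbolic groups. You run the same three steps, with the variant that you invoke the unconditional \cref{PD thm} in place of \cref{PD conj}, which hands you a kernel that is $\mathrm{PD}^{n-1}(\mathbb K)$ over every field rather than $\mathrm{PD}^{n-1}$ over $\mathbb Z$ — this is exactly the gap the paper flags between \cref{PD thm} and \cref{PD conj}, and you correctly identify it as part of the hard residue in Step~3. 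Your extra observations are reasonable refinements not made explicit in the paper: passing to a finite cover with torus cusp cross-sections and using the boundary-fibring version of \cref{Farrell} in the cusped case; noting that $n=5$ lies outside the range of \cref{Farrell} and must be treated by a direct geometric or combinatorial construction as in \cref{IMM}; and pointing to the Jankiewicz--Norin--Wise game as an unconditional route in the arithmetic-of-simplest-type / RACG-commensurable setting, which is precisely how \cref{IMM} was obtained. The two genuine gaps — positive-characteristic vanishing and finite presentability of the fibre — remain open in both your account and the paper's, so to be clear: this is a well-aligned strategy sketch, not a proof, and the paper does not supply one either.
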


It is more believable, because we have a strategy to prove it in three steps:

\noindent \textbf{Step 1:} Prove \cref{Dodziuk conjecture}. Dodziuk's proof of \cref{Dodziuk} is geometric, and it is not clear how to translate it into an algebraic one. This step is particularly interesting, since if one can find a counterexample here, it will automatically be the first example of a hyperbolic manifold in odd dimension that does not virtually fibre.

\noindent \textbf{Step 2:} Prove \cref{PD conj}. In conjunction with \cref{PD thm} and the previous step, it would give us a virtual epimorphism to $\mathbb Z$ whose kernel is a Poincar\'e-duality group.

\noindent \textbf{Step 3:} Prove that the kernel is finitely presented. Then we may apply \cref{Farrell aspherical} -- our $G$ is either hyperbolic (in the sense of Gromov), or hyperbolic relative to free-abelian groups. In either case we have the Farrell--Jones conjecture for $G$ by \cite{Bartelsetal2007,Bartels2017}, and so $\mathrm{Wh}(G) = 0$.

Step 3 will probably end up being the most difficult one. It will require a more homotopic approach.

Dealing with the first two steps would immediately give the first example of a fibred finite-volume hyperbolic manifold in dimension $7$, since one can get finitely generated kernels using \cite{Italianoetal2022}.

There is also potentially bad news for \cref{RFRS conj}: Avramidi--Okun--Schreve \cite{Avramidietal2023} constructed a closed aspherical manifold in dimension $7$ whose fundamental group is Gromov-hyperbolic and RFRS, and which does not virtually fibre. This does not contradict the conjecture, since the manifold is not known to be hyperbolic. It also does not contradict  \cref{manifold conj} -- the impossibility of virtual fibring is shown precisely by exhibiting a non-vanishing $L^2$-Betti number, in positive odd characteristic.

How far are \cref{audacious conj,RFRS conj} from one another? We do not know any examples of finite-volume hyperbolic manifolds whose fundamental group is not virtually RFRS. The compact arithmetic hyperbolic manifolds of simple type are all virtually RFRS by Bergeron--Wise \cite{BergeronWise2012}, and this is a very rich source of examples.

Having a fibred hyperbolic manifold beyond dimension $3$ opens up many immediate questions: what geometry (if any) does the fibre carry? It is certainly not hyperbolic due to Paulin's theorem \cite{Paulin1991}. What dynamical properties does the monodromy have? Will we have a pseudo-Anosov type behaviour?

\section*{Acknowledgments}
This survey is based on articles written with Martin Bridson, Sam Fisher, Florian Funke, Fabian Henneke, Sam Hughes, Giovanni Italiano, and Monika Kudlinska. It also draws heavily from the work of Ian Agol, Stefan Friedl, Andrei Jaikin-Zapirain, Peter Linnell, and Wolfgang L\"uck. Additionally, the author has benefited greatly from conversations with all these people, with the sad exception of Peter Linnell, whose ideas remain the key inspiration.

\bibliographystyle{siamplain}
\bibliography{bibliography}
\end{document}